\newtheorem{thm}{Theorem}[section]
\newtheorem{prop}[thm]{Proposition}
\newtheorem{cor}[thm]{Corollary}
\newtheorem{lem}[thm]{Lemma}
\newtheorem{obs}[thm]{Observation}
\theoremstyle{definition}
\newtheorem{defn}[thm]{Definition}
\newtheorem{ex}[thm]{Example}
\newenvironment{customthm}[1]
  {\innercustomthm}
  {\endinnercustomthm}
\newenvironment{customcor}[1]
  {\innercustomcor}
  {\endinnercustomcor}
 \newcommand{\bpf}{\begin{proof}}
 \newcommand{\epf}{\end{proof}}
 \newcommand{\aw}{\textup{aw}}
 \newcommand{\dist}{\textup{d}}
 \newcommand{\diam}{\textup{diam}}
 \newcommand{\rad}{\textup{rad}}
\title{Full classification of anti-van der Waerden numbers of graph products of forests}
\author[1]{Zhanar Berikkyzy}
\author[2]{Joe Miller}
\author[3]{Nathan Warnberg}
\affil[1]{Mathematics Department, Fairfield University\{zberikkyzy@fairfield.edu\}}
\affil[2]{Department of Mathematics, Iowa State University \{jmiller0@iastate.edu\}}
\affil[3]{Department of Mathematics and Statistics, University of Wisconsin-La Crosse, \{nwarnberg@uwlax.edu\}}
\date{}
\begin{document}
\maketitle

\begin{abstract}
    The anti-van der Waerden number of a graph $G$ is the fewest number of colors needed to guarantee a rainbow $3$-term arithmetic progression in $G$, denoted $\aw(G,3)$. It is known that the anti-van der Waerden number of graph products is $3 \le \aw(G\square H,3)\le 4$.  Previous work has been done on classifying families of graph products into $\aw(G\square H,3) = 3$ and $\aw(G\square H,3) = 4$.  Some of these families include the product of two paths, the product of paths and cycles, the product of two cycles, and the product of odd cycles with any graph.  Recently, a partial characterization of the product of two trees was established.  This paper completes the characterization for $\aw(T\square T',3)$ where $T$ and $T'$ are trees.  Moreover, this result extends to a full classification of products of forests.
    
\end{abstract}

\begin{footnotesize}
{\bf Keywords:} anti-Ramsey, arithmetic progressions, graphs, trees

{\bf Mathematics Subject Classification:} 05C05, 05C12, 05C15, 05C35, 05C76

\end{footnotesize}

\section{Introduction}

Ramsey Theory is a branch of mathematics that assigns colors to elements of a set and then determines if monochromatic substructures exist within the set, whereas Anti-Ramsey Theory determines if rainbow (polychromatic) substructures exist within the set.  Ramsey Theory has a long history dating back to around $1920$ with Schur, Ramsey and van der Waerden making the earliest contributions (see \cite{R,S,W27}) where the sets considered were integers or graph edges.  It was not until $1973$ when when Erd{\H{o}}s, Simonovits, and S{\'{o}}s, in \cite{ESS}, introduced the idea of Anti-Ramsey Theory.  Thirty years later, Jungi\'c et al started investigating anti-van der Waerden problems where the sets being colored were $\{1,2,\dots,n\} = [n]$ and $\mathbb{Z}_n$ and the rainbow substructures were $3$-term arithmetic progressions (see \cite{Jungicsecond, Jungicfirst}).  These papers focused on the conditions on the sizes of the color classes that guarantee rainbow arithmetic progressions.  The anti-van der Waerden number was first defined in \cite{U} by Uherka in 2013.  
The question asked was: given a fixed value of $n$, what is the fewest number of colors needed to guarantee a rainbow $k$-term arithmetic progression in $[n]$ or $\mathbb{Z}_n$, denoted $\aw([n],k)$ and $\aw(\mathbb{Z}_n,k)$, respectively?
In \cite{DMS}, Butler et al. bounded $\aw(\mathbb{Z}_n,3)$ based on the prime factorization of $n$ and found a logarithmic bound for $\aw([n],3)$.  The exact values of $\aw([n],3)$ were then determined by Berikkyzy, Schulte and Young in \cite{BSY}. 
Young, in \cite{finabgroup}, determined the anti-van der Waerden numbers for finite abelian groups based on the order of the group.  Concurrently, authors noted that a $3$-term arithmetic progression $(a_1,a_1+d, a_1+2d)$ satisfies the equation $x_1 + x_3 = 2x_2$ and anti-van der Waerden numbers for Sidon sets and other linear equations were investigated (see \cite{ATHNWY}, \cite{BK}, \cite{BKKTTY}).

Investigations then turned to arithmetic progressions in graphs as it was observed that the path $P_n$ behaves like $[n]$ and the cycle $C_n$ behave like $\mathbb{Z}_n$.

 Given a graph $G$, an {\it exact $r$-coloring of $G$} is a surjective function $c:V(G) \to [1,\dots,r]$.  An {\it arithmetic progression} in $G$ of length $k$ ($k$-AP) with common difference $d$ is a list of vertices $(v_1,\dots, v_k)$ such that $\dist(v_i,v_{i+1}) = d$ for $1\le i < k$.  
 An arithmetic progression is {\it rainbow} if all of the vertices are colored distinctly.  
 The fewest number of colors that guarantees a rainbow $k$-AP is called the {\it anti-van der Waerden number of $G$} and is denoted $\aw(G,k)$. To show $r\le \aw(G,k)$ we construct an exact $(r-1)$-coloring that avoids rainbow $k$-APs.  To show $\aw(G,k) \le r$, we show that every exact $r$-coloring gives a rainbow $k$-AP.

 The definition of $\aw(G,k)$ was introduced in \cite{RSW} and provided the first results about graph products.  In particular, Theorems \ref{PmxPn} and \ref{theorem:rsw} have proven to be essential in most results about $\aw(G\square H,k)$.

\begin{thm}[\cite{RSW}]\label{PmxPn}
    For $m,n \geq 2$, \[\aw(P_m \square P_n, 3) = \begin{cases}
3 & \text{if $m = 2$ and $n$ is even, or $m = 3$ and $n$ is odd,} \\
4 & \text{otherwise.}
\end{cases}\]
\end{thm}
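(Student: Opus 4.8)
The plan is to pin $\aw(P_m \square P_n,3)$ between the trivial lower bound $\aw \ge 3$ (two colors can never produce a rainbow triple) and the known ceiling $\aw(G\square H,3)\le 4$, so that the whole content is deciding, for each $(m,n)$, whether the value is $3$ or $4$. Since $P_m \square P_n \cong P_n \square P_m$, I would assume $m \le n$ throughout and read the two ``$=3$'' families as $\{2,\text{even}\}$ and $\{3,\text{odd}\}$ up to this symmetry. Everything is organized around the $\ell_1$ metric $\dist((i,j),(i',j'))=|i-i'|+|j-j'|$ and the question of which triples form a $3$-term AP under it.

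For the ``$=4$'' cases I would exhibit, for each relevant $(m,n)$, an explicit exact $3$-coloring with no rainbow $3$-AP; with $\aw \le 4$ this forces $\aw = 4$. The prototype is the \emph{antipodal} coloring: give two vertices $u,w$ the colors $2$ and $3$ and everything else color $1$. Such a coloring is rainbow-free exactly when $\{u,w\}$ lies in no common $3$-AP, since a rainbow triple would have to use both $u$ and $w$. The cleanest instance takes $u,w$ to be the opposite corners $(1,1),(m,n)$: their distance is the diameter $m+n-2$, so when $m+n$ is odd no vertex is equidistant from both (no midpoint is possible) and, being the unique diametral pair, neither can sit as an endpoint of an AP whose far endpoint is the other (there is no second vertex at that distance). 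This single idea settles every case with $m+n$ odd, including the thin families $m=2$ ($n$ odd) and $m=3$ ($n$ even). The stubborn ``$=4$'' cases are precisely those with $m,n\ge 4$ and $m+n$ \emph{even}, where the corner pair has a midpoint on the central anti-diagonal and fails; there one must replace the corner pair by a more carefully placed pair (or, if none works, a genuinely larger construction) and verify by hand that it meets no $3$-AP.

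For the ``$=3$'' families I would instead show that \emph{every} exact $3$-coloring contains a rainbow $3$-AP, i.e.\ $\aw \le 3$. Here I would exploit the thinness $m\in\{2,3\}$: in such a strip the $3$-APs are highly constrained (horizontal triples within a row, together with the short vertical and bent triples linking the two or three rows), and I would argue that no surjective $3$-coloring can distribute its colors so as to dodge all of them. A natural route is induction on $n$, peeling off an end column: after a bounded case analysis on the colors in the last column, a putatively rainbow-free coloring either already reveals a rainbow triple or restricts to a rainbow-free coloring of the shorter strip, at which point the parity of $n$ (even for $m=2$, odd for $m=3$) is exactly what prevents the ``balanced'' escape coloring from closing up. Theorem~\ref{theorem:rsw} and the known one-dimensional values $\aw(P_n,3)=\aw([n],3)$ would be the tools controlling the behavior inside each row.

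I expect the main obstacle to be the ``$=3$'' direction. Ruling out \emph{all} rainbow-free $3$-colorings is far harder than displaying one good coloring, and it is where the parity hypothesis must be used decisively rather than incidentally — the same strip with the opposite parity is genuinely $4$, so the argument cannot be parity-blind. A secondary difficulty is the explicit ``$=4$'' construction in the thick, even-diameter range $m,n\ge 4$, where the clean opposite-corner pair breaks down and a bespoke coloring must be built and checked. Both are bookkeeping-heavy case analyses, but making the parity thresholds coincide exactly with the stated cases is the crux of the proof.
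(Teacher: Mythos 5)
First, a framing note: the paper does not prove Theorem~\ref{PmxPn} at all --- it is imported verbatim from \cite{RSW} --- so your attempt can only be judged against the known argument, not against anything in this manuscript. Judged that way, the part of your proposal that is actually carried out is correct and complete: for $m+n$ odd, the two-apex coloring with the opposite corners $(1,1)$ and $(m,n)$ colored $2$ and $3$ is rainbow-free, and your verification is sound under this paper's definition of a $3$-AP (only consecutive distances equal): bipartite parity rules out an equidistant middle vertex, and the spheres of radius $m+n-2$ about the corners are singletons, ruling out the two endpoint configurations. Together with Theorem~\ref{theorem:rsw} this genuinely settles every mixed-parity case, including $m=2$, $n$ odd and $m=3$, $n$ even.

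The gaps are in the other two regimes, and one of them contains a step that provably fails. For the $m+n$ even, $m,n\geq 4$ cases you suggest ``a more carefully placed pair'' before conceding a larger construction might be needed; in fact \emph{no} two-apex coloring can work. If $d(u,w)$ is even, the midpoint vertex of any $u$--$w$ geodesic is equidistant from $u$ and $w$, giving a rainbow $(u,z,w)$; if $d(u,w)=D$ is odd, then in $P_m\square P_n$ the sphere of radius $D$ about a vertex is a singleton only when that vertex is a corner and $D=m+n-2$, which is even here, so there is always some $z\neq u$ with $d(z,w)=d(w,u)$, giving a rainbow $(z,w,u)$. So a structurally different coloring is mandatory --- e.g.\ one corner $red$ together with the entire sphere of radius $\diam-1$ about it colored $blue$, which is exactly the shape of the coloring in Lemma~\ref{lem:even-diam-rb-free-coloring} --- and its rainbow-freeness is a nontrivial verification you have not attempted. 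Second, the $\aw=3$ direction ($m=2$, $n$ even and $m=3$, $n$ odd) is only a plan, and the induction as stated does not set up: the restriction of an exact rainbow-free $3$-coloring of the strip to $P_m\square P_{n-1}$ need not be exact (the deleted column may carry the sole occurrence of a color), so the inductive hypothesis cannot be invoked, and you never identify where the parity of $n$ actually bites. The known arguments instead extract structure from an arbitrary rainbow-free $3$-coloring directly, via tools like Lemma~\ref{isometricpathorC3} (a shortest isometric path carrying all three colors, hence uniquely colored endpoints and monochromatic interior) and Lemma~\ref{|c(Hi)|<3} (each column sees at most two colors), in the style of the proof of Lemma~\ref{lem:annoyingP2}; this is where the real work of the theorem lives, and your sketch does not yet contain it.
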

A subgraph $H$ of $G$ is {\it isometric} if for every $u,v\in V(H)$ we have $d_H(u,v) = d_G(u,v)$.
Theorem \ref{PmxPn} is used when we find an isometric subgraph $P_m\square P_n$ within $G\square H$ to either show that our coloring must have a rainbow within the isometric subgraph or that we know we can color the isometric subgraph in some way and avoid rainbows.  This result was used extensively when determining $\aw(P_m\square C_n,3)$ in \cite{MW}.

\begin{thm}[\cite{RSW}]\label{theorem:rsw}
    If $G$ and $H$ are connected graphs and $|G|,|H| \ge 2$, then $\aw(G\square H,3) \le 4$.
\end{thm}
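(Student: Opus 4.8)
The plan is to argue by contraposition: I assume that $c$ is an exact $4$-coloring of $G\square H$ with no rainbow $3$-AP and I try to show that $c$ in fact uses at most $3$ colors, contradicting surjectivity onto four colors. Two standard facts drive the whole argument. First, distance in a Cartesian product splits as $\dist\big((g,h),(g',h')\big)=d_G(g,g')+d_H(h,h')$, so if $P$ is a geodesic (isometric path) of $G$ and $Q$ a geodesic of $H$, then $P\square Q$ is an \emph{isometric} grid inside $G\square H$; by Theorem~\ref{PmxPn} every such grid satisfies $\aw(P\square Q,3)\le 4$, and a rainbow $3$-AP inside an isometric subgraph is automatically a rainbow $3$-AP of $G\square H$. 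Second, since $G$ and $H$ are connected with at least two vertices, each factor contains an edge, so $G\square H$ contains the unit $4$-cycles $\{g,g'\}\times\{h,h'\}$ for edges $gg'$ of $G$ and $hh'$ of $H$, and each vertex $(g,h)$ carries the ``bent'' triple $\big((g',h),(g,h),(g,h')\big)$, which is a $3$-AP with common difference $1$.

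First I would extract local rigidity from the absence of difference-$1$ rainbows. If $N(v)$ contained two vertices whose colors are distinct and both differ from $c(v)$, then $v$ together with those two neighbors is a rainbow $3$-AP; hence \emph{every closed neighborhood $N[v]$ uses at most two colors}. Running the same observation around a unit $4$-cycle (its four ``corner'' triples are difference-$1$ APs) forces \emph{every unit $4$-cycle to be $2$-colored}, and the bent triples force the secondary color seen in the $G$-direction at $(g,h)$ to coincide with the one seen in the $H$-direction whenever both differ from $c(g,h)$. In effect, around any vertex there is a single ``secondary'' color, and this should already pin down the coloring of each $G$-fiber and each $H$-fiber quite tightly.

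Next I would promote this local rigidity to a global bound by propagating across fibers using longer progressions. The engine is that connectivity supplies a walk along which a third color first appears, and a suitable $4$-cycle lets me ``bend'' a length-two color-gaining walk into an honest arithmetic progression with equal consecutive steps; difference-$2$ progressions (one step in each factor, or two steps in one factor) provide the additional leverage needed to move a third color into the center or an endpoint of a genuine AP. The goal of this stage is to show that once the secondary-color structure is fixed, no third and fourth color can coexist without creating such a rainbow, so the whole product is forced onto at most three colors, the desired contradiction. Where the factors are large enough to contain geodesics of length $\ge 2$, I would instead locate an isometric grid $P_a\square P_b$ carrying the extra colors and quote Theorem~\ref{PmxPn} directly.

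The hard part, I expect, is precisely this global bookkeeping: the local $2$-coloring constraint alone does \emph{not} bound the number of colors (a striped coloring of a long path witnesses this in a single factor), so the proof must genuinely use progressions that move in both factors to guarantee that a third or fourth color can always be placed at a \emph{matching} distance --- equal consecutive steps, not merely a path --- to complete a rainbow AP. Two further wrinkles will need separate care: degenerate factors of diameter $1$ (such as $P_2$ or complete graphs), where no vertex lies at distance $2$ in that direction and so some difference-$2$ progressions are unavailable, and the small boundary configurations; both are naturally dispatched by restricting to an isometric $P_2\square P_n$ or $P_3\square P_n$ and invoking the exact values in Theorem~\ref{PmxPn}, or by a direct finite check.
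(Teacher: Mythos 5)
Your local analysis is sound: in a rainbow-free coloring every closed neighborhood carries at most two colors (two neighbors of $v$ colored differently from $v$ and from each other form a rainbow difference-$1$ $3$-AP), and your corner-triple argument on unit $4$-cycles is valid. But the proof stops being a proof exactly at the stage you yourself flag as hard. The entire second stage --- ``promote this local rigidity to a global bound by propagating across fibers'' --- is a statement of intent, not an argument: no lemma about difference-$2$ progressions is formulated or proved, the ``bend a length-two color-gaining walk into an honest arithmetic progression'' device is never instantiated, and the central claim (that a third and fourth color cannot coexist once the secondary-color structure is fixed) is simply asserted as ``the goal of this stage.'' Since, as you correctly note, the local constraints alone are compatible with colorings using arbitrarily many colors, everything rests on this unproven propagation step; as written, the argument establishes nothing beyond the neighborhood bound.

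Your fallback also has a genuine hole: invoking Theorem \ref{PmxPn} requires an isometric grid $P_a\square P_b$ (with $a,b\ge 2$) whose vertices meet all four color classes, and nothing guarantees that four color classes can be captured inside a single product of geodesics --- in a product of trees, for instance, the four colors may sit in different branches so that no one geodesic of $G$ paired with one geodesic of $H$ contains representatives of all of them. (And a degenerate grid with $a=1$ is just a path, where $\aw([n],3)$ grows logarithmically, so the two-dimensionality must be genuinely arranged, not assumed.) For comparison: the paper does not prove this theorem at all --- it is quoted from \cite{RSW} --- and the known argument runs through structural lemmas of the kind this paper collects, rather than through your neighborhood rigidity: Lemma \ref{isometricpathorC3} produces a \emph{shortest} isometric path carrying three colors (so its endpoints are uniquely colored and its interior monochromatic), Lemma \ref{|c(Hi)|<3} shows each fiber $G_i$ sees at most two colors, and Proposition \ref{prop:everycopy,|H|>2} supplies a color common to every fiber; rainbow $3$-APs are then built from the endpoints of that path, the pattern used repeatedly in Section \ref{sec:3} (e.g., Lemma \ref{lem:annoyingP2} and Proposition \ref{prop:weakly/odd}). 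That fiber-level bookkeeping is precisely the missing global mechanism your sketch would need.
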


Authors investigated $\aw(G,k)$ on trees and graphs with small diameter in \cite{SWY}, graph products of paths and cycles in \cite{MW}, and graph products of trees $T$ and $T'$ where $\diam(T\square T)$ is odd in \cite{BMSWW}.  

This paper completes the classification of $\aw(T\square T',3)$ by considering when $\diam(T\square T')$ is even. The following is the main result of this paper.

\begin{thm}\label{thm:treeprodeven}
    Let $T$ and $T'$ be nontrivial trees, where $\diam(T\square T')$ is even. Then 
    \[\aw(T\square T',3) \hspace{-.05cm}=\hspace{-.05cm}
\begin{cases}
    3 & \text{if $T$ or $T'$ is weakly non-$3$-peripheral or isomorphic to $P_2$}, \\
    4 & \text{if $T$ and $T'$ are both strongly non-$3$-peripheral and not $P_2$}.
\end{cases}\]
\end{thm}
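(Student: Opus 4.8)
The plan is to treat the two branches of the dichotomy separately; in each branch one inequality is essentially free and the other carries the content. Throughout I would exploit the even-diameter hypothesis, which forces $\diam(T)$ and $\diam(T')$ to share parity and hence pins down the central structure of $T\square T'$ (a single central vertex when both factors have even diameter, a central $4$-cycle when both are odd). This central anchor is what lets distances to the two factor-centers organize the argument.

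Consider first the case in which $T$ and $T'$ are both strongly non-$3$-peripheral and neither is $P_2$, where we must show $\aw(T\square T',3)=4$. The bound $\aw(T\square T',3)\le 4$ is immediate from Theorem \ref{theorem:rsw}, so the task is the lower bound: I would produce an explicit exact $3$-coloring of $V(T\square T')$ with no rainbow $3$-AP. I would color each vertex $(a,b)$ as a function of its distances to the centers of the two factors, chosen so that the three color classes interlock in ``shells'' around the center. The design goal is that in every $3$-AP $(v_1,v_2,v_3)$ the middle vertex $v_2$ is forced to repeat the color of $v_1$ or $v_3$. Since the common difference $d$ splits as a $T$-part plus a $T'$-part, I would verify rainbow-freeness by a case analysis on this split; the strongly non-$3$-peripheral hypothesis should be exactly the structural input guaranteeing that no branch is arranged so as to let a $3$-AP slip past a color collision.

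Now consider the case in which $T$ or $T'$ is weakly non-$3$-peripheral or isomorphic to $P_2$, where we claim $\aw(T\square T',3)=3$. The bound $\aw(T\square T',3)\ge 3$ is trivial, since a surjective $2$-coloring cannot exhibit three distinctly colored vertices. The content is the upper bound $\aw(T\square T',3)\le 3$: every exact $3$-coloring must contain a rainbow $3$-AP. My strategy is to locate inside $T\square T'$ an isometric copy of one of the ``good'' grids of Theorem \ref{PmxPn}, namely $P_2\square P_n$ with $n$ even or $P_3\square P_n$ with $n$ odd, whose parity is compatible with the even-diameter hypothesis; the weakly non-$3$-peripheral condition should be precisely what guarantees such an isometric grid threads through the appropriate branches. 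If all three colors appear on such a grid, then since distances inside it agree with distances in $T\square T'$, Theorem \ref{PmxPn} immediately yields a rainbow $3$-AP. Otherwise, when some color is confined outside every good grid, I would argue directly, combining a vertex of the missing color with the grid's arithmetic structure to build the progression.

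The step I expect to be the main obstacle is the lower-bound construction in the $\aw=4$ case. In a path, a common difference $d$ determines a $3$-AP almost rigidly, but in a tree the two arms of a $3$-AP may run along the same branch, along opposite branches, or be distributed between the two factors, so a single rainbow-free coloring must defeat all of these geometrically distinct progressions at once. The delicate point is to keep the coloring globally consistent across every branch emanating from the center while still respecting the parity forced by the even diameter, and to show that the strongly non-$3$-peripheral hypothesis rules out exactly the branch arrangements that would otherwise produce a rainbow. Verifying this exhaustively---splitting on whether $d$ lies wholly in one factor or straddles both---will be the technical heart of the argument.
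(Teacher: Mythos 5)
You have the two easy bounds right (Theorem \ref{theorem:rsw} for $\aw\le 4$, surjectivity for $\aw\ge 3$), but both directions that carry content are left as plans with genuine gaps. For the lower bound when both trees are strongly non-$3$-peripheral, you propose a coloring that is a function of distances to the two factor centers, but you never exhibit it, and the functional form itself is at odds with what the paper shows works: such a coloring is constant on eccentricity shells and in particular colors all peripheral vertices alike, whereas the paper's rainbow-free coloring (Lemma \ref{lem:even-diam-rb-free-coloring}) is anchored at a single diametral pair $v_{1,1},v_{j,k}$ --- $blue$ exactly the vertices at distance $\diam(T\square T')-1$ from $v_{1,1}$, $red$ exactly those at distance $\diam(T\square T')$ from $v_{j,k}$ --- and is emphatically not shell-symmetric. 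The entire content of that direction is the verification that every $red$--$blue$ pair sits at distance exactly $\diam(T\square T')-1$ (odd, so bipartiteness kills a $green$ midpoint), which is proved by repeated applications of Lemma \ref{lem:3.10} inside the subtrees $T_{u_1^-}$ and $T'_{w_1^-}$; strong non-$3$-peripherality enters precisely to make those subtrees non-$3$-peripheral (by definition in the odd-diameter case, via Lemma \ref{lem:3-peripheral even diam} and Observation \ref{obs:diamTtilde} in the even case, split across Propositions \ref{prop:strongly/odd} and \ref{prop:strongly/even}), not as a balancing condition on branches around the center. Saying the hypothesis ``should be exactly the structural input'' is the theorem restated, not an argument, and you yourself flag this step as the obstacle.

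For the upper bound when a factor is weakly non-$3$-peripheral, your grid strategy has a hole you acknowledge but do not fill: nothing forces all three colors of an arbitrary exact $3$-coloring to land in any single isometric $P_2\square P_n$ or $P_3\square P_n$, and the fallback ``argue directly'' is precisely where the work lives. The paper's route is different: take a \emph{shortest} isometric path containing all three colors (Lemma \ref{isometricpathorC3}), whose minimality forces uniquely colored endpoints and an all-$green$ interior, then either slide an endpoint using Corollary \ref{d(u,v)+1}, Corollary \ref{|c(V(Gi U Gj))|<3} and Lemma \ref{lem:weakly/per} to manufacture a rainbow $3$-AP directly, or trap the tri-chromatic path inside an isometric subproduct $T_{u_i^-}\square T'$ (odd diameters, Proposition \ref{prop:weakly/odd}, finishing with Theorem \ref{TxG 3-per} since $T_{u_i^-}$ is $3$-peripheral) or $T_{u_r^-}\square T'_{w_i^-}$ (even diameters, Proposition \ref{prop:weakly/even}, reducing to the odd case). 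You also never address why $P_2$ appears as an exceptional case in the statement: $(P_2)_{u^-}$ is a single vertex, so the reduction machinery breaks and the paper needs the separate Lemma \ref{lem:annoyingP2} --- which, ironically, is the one place your isometric-grid idea is actually how the paper argues, locating an isometric $P_2\square P_n$ with $n$ even and invoking Theorem \ref{PmxPn}.
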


We summarize the full classification of anti-van der Waerden numbers of graph products of trees in the following corollary.

\begin{cor}\label{cor:full}
Let $T$ and $T'$ be trees.  Then,
\[\aw(T\square T',3) \hspace{-.05cm}=\hspace{-.1cm} 
\begin{cases}
     & \text{if $T$ or $T'$ is $3$-peripheral, or}\\

    3& \text{$\diam(T\square T')$ is even and $T$ or $T'$ is $P_2$, or}\\

    & \text{$\diam(T\square T')$ is even and $T$ or $T'$ is weakly non-$3$-peripheral,}\\
    4 & \text{otherwise}.
\end{cases}\]

\end{cor}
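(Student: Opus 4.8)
The plan is to obtain Corollary \ref{cor:full} by splicing together the two existing regimes: the even-diameter classification of Theorem \ref{thm:treeprodeven} and the odd-diameter classification of \cite{BMSWW}. Since $\diam(T\square T') = \diam(T)+\diam(T')$, the natural organizing principle is the parity of this quantity, and the whole argument is a case analysis on that parity together with a careful matching of case conditions. Before splitting, I would record the two definitional bookkeeping facts that make the statement coherent: first, that weakly non-$3$-peripheral and strongly non-$3$-peripheral partition the nontrivial trees; and second, that every $3$-peripheral tree is weakly non-$3$-peripheral (with $P_3$ a weakly non-$3$-peripheral tree that is \emph{not} $3$-peripheral, and $P_2$ being strongly non-$3$-peripheral). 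This containment is exactly what lets the unconditional ``$3$-peripheral'' clause of the corollary coexist with the parity-gated clauses. I would also reduce to the case that both $T$ and $T'$ are nontrivial, since otherwise $T\square T'$ degenerates to a single tree, which is governed by \cite{SWY} rather than by a product statement.

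In the odd-diameter case I would invoke the main result of \cite{BMSWW}, which gives $\aw(T\square T',3)=3$ precisely when $T$ or $T'$ is $3$-peripheral and $\aw(T\square T',3)=4$ otherwise. Because $\diam(T\square T')$ odd makes the two even-gated clauses of the corollary vacuous, the corollary collapses in this regime to exactly this dichotomy, so nothing further is needed here.

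In the even-diameter case I would apply Theorem \ref{thm:treeprodeven} directly: $\aw(T\square T',3)=3$ when $T$ or $T'$ is weakly non-$3$-peripheral or $P_2$, and $\aw(T\square T',3)=4$ when $T$ and $T'$ are both strongly non-$3$-peripheral and not $P_2$. Using the containment from the first step, the three value-$3$ clauses of the corollary (some tree $3$-peripheral; even diameter and some tree $P_2$; even diameter and some tree weakly non-$3$-peripheral) collapse, in the even regime, to ``some tree weakly non-$3$-peripheral or $P_2$,'' which is exactly Theorem \ref{thm:treeprodeven}; and since the two classes partition the trees, the complementary value-$4$ region is exactly ``both strongly non-$3$-peripheral and not $P_2$.'' Concatenating the two parity cases then yields the stated closed form.

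The main obstacle is not any single estimate but the reconciliation embedded in the first step: I must verify from the definitions that a $3$-peripheral tree is always weakly non-$3$-peripheral, hence never strongly non-$3$-peripheral and never $P_2$. Without this, the corollary's unconditional implication ``$T$ or $T'$ is $3$-peripheral $\Rightarrow \aw(T\square T',3)=3$'' could clash, in the even-diameter regime, with the value-$4$ case of Theorem \ref{thm:treeprodeven}. Once this single content-bearing containment is in hand, together with the partition of the nontrivial trees into the weakly and strongly non-$3$-peripheral classes, the remainder of the proof is a routine matching of the case conditions across the two parities.
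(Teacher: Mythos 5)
There is a genuine gap, and it sits exactly where you located your ``single content-bearing containment.'' Your claim that every $3$-peripheral tree is weakly non-$3$-peripheral is false under Definition \ref{defn:weak/strong non-3-per}: both parts of that definition begin ``Let $T$ be non-$3$-peripheral,'' so the terms weakly and strongly non-$3$-peripheral are only defined for (and by their very name exclude the complement of) non-$3$-peripheral trees. A $3$-peripheral tree is neither weakly nor strongly non-$3$-peripheral. Likewise, your opening bookkeeping fact is off: the weak/strong dichotomy partitions the nontrivial \emph{non}-$3$-peripheral trees (with $P_2$ deliberately set aside by the paper, since $(P_2)_{u^-}$ is an isolated vertex and the paper declines to classify it), not all nontrivial trees. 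Consequently your even-diameter argument fails to cover the case where, say, $T$ is $3$-peripheral and $T'$ is strongly non-$3$-peripheral of even diameter: Theorem \ref{thm:treeprodeven} makes no assertion about this pair, since $T$ is neither weakly non-$3$-peripheral nor isomorphic to $P_2$, and the two trees are not ``both strongly non-$3$-peripheral.'' Your collapsed condition ``some tree weakly non-$3$-peripheral or $P_2$'' is false here, yet the corollary asserts the value $3$; the verification you defer to the definitions cannot succeed, because the definitions preclude it.

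The repair is immediate and is precisely what the paper's one-sentence proof does: in addition to Theorem \ref{thm:treeprodeven} (even regime) and Theorem \ref{theorem:treeprododd} (odd regime, both trees non-$3$-peripheral), you must cite Theorem \ref{TxG 3-per}, which gives $\aw(T\square G,3)=3$ whenever one factor is $3$-peripheral, in \emph{both} parity regimes; your odd-diameter paragraph implicitly bundles it into ``the main result of \cite{BMSWW},'' but your even-diameter paragraph omits it and substitutes the false containment. With that third ingredient, the $3$-peripheral case is disposed of first, Theorem \ref{thm:treeprodeven} is applied only to the remaining trees where weakly/strongly genuinely partition, and your parity split and case matching then go through. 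It is worth noting that the harmonization you wanted does hold ``morally'': if Definition \ref{defn:weak/strong non-3-per}(ii) were applied verbatim to a $3$-peripheral tree $T$, then for a central vertex $v$ the tree $T_{v^+}$ has unchanged diameter and is still $3$-peripheral, so $T$ would come out weakly. But the paper never extends the definition this way, and the published statement of Theorem \ref{thm:treeprodeven} cannot be stretched to cover the $3$-peripheral case without Theorem \ref{TxG 3-per}.
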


The paper is organized as follows.  In Section \ref{sec:2}, definitions, notation and conventions are established along with several results that support the main theorem of the paper.  Section \ref{sec:3} provides a case-analysis for $\aw(T\square T',3)$ based on properties of $T$ and $T'$, e.g. whether the trees are strongly or weakly $3$-peripheral (which we define in Section \ref{sec:2}), and concludes with the main result.

\section{Preliminary Results}\label{sec:2}

This section introduces the tools needed to prove the main result.  
We begin with basic definitions and known results.

If $G = (V,E)$ and $H = (V', E')$ then the {\it Cartesian
product}, written $G\square H$, has vertex set $\{(x, y) : x \in V \text{ and } y \in V' \}$ and $(x, y)$ and
$(x', y')$ are adjacent in $G \square H$ if either $x = x'$ and $yy' \in E'$ or $y = y'$ and $xx' \in E$. This paper will use the convention that if \[V(G) = \{u_1,\dots, u_{n_1}\} \quad \text{and} \quad V(H) = \{w_1,\dots,w_{n_2}\},\] then $V(G\square H) = \{v_{1,1},\dots, v_{n_1,n_2}\}$ where $v_{i,j}$ corresponds to the vertices $u_i \in V(G)$ and $w_j \in V(H)$.  Also, if $1\leq i \leq n_2$, then $G_i$ denotes the $i$th labeled copy of $G$ in $G \square H$. Likewise, if $1 \leq j \leq n_1$, then $H_j$ denotes the $j$th labeled copy of $H$ in $G \square H$.  In other words, $G_i$ is the induced subgraph $G_i = G\square H[\{v_{1,i},\dots, v_{n_2,i}\}]$, and $H_j$ is the induced subgraph $H_j = G\square H[\{v_{j,1}, \dots, v_{j,n_1}\}]$.  Notice that the $i$ subscript in $G_i$ corresponds to the $i$th vertex of $H$ and the $j$ in the subscript in $H_j$ corresponds to the $j$th vertex of $G$. See Figure \ref{fig:cex} below for an example where $G = P_4$ and $H$ is a broom graph.

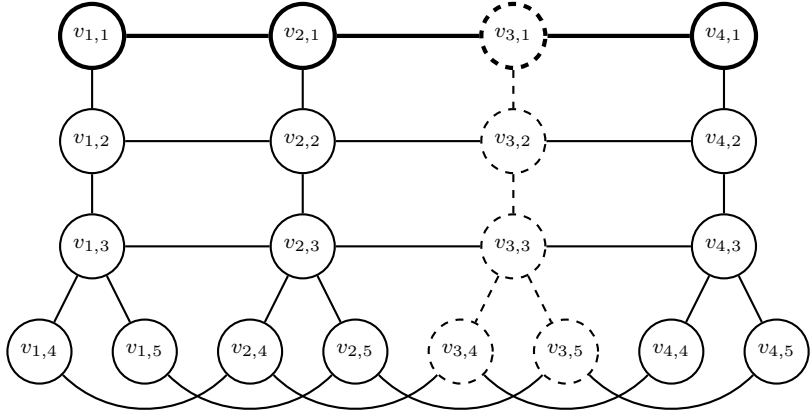
\begin{figure}
    \centering
    \begin{tikzpicture}[scale = .7]
        \node[draw,circle,ultra thick] (11) at (0,6) {\footnotesize $v_{1,1}$};
        \node[draw,circle,thick] (12) at (0,4) {\footnotesize $v_{1,2}$};
        \node[draw,circle,thick] (13) at (0,2) {\footnotesize $v_{1,3}$};
        \node[draw,circle,thick] (14) at (-1,0) {\footnotesize $v_{1,4}$};
        \node[draw,circle,thick] (15) at (1,0) {\footnotesize $v_{1,5}$};
        
        \node[draw,circle,ultra thick] (21) at (4,6) {\footnotesize $v_{2,1}$};
        \node[draw,circle,thick] (22) at (4,4) {\footnotesize $v_{2,2}$};
        \node[draw,circle,thick] (23) at (4,2) {\footnotesize $v_{2,3}$};
        \node[draw,circle,thick] (24) at (3,0) {\footnotesize $v_{2,4}$};
        \node[draw,circle,thick] (25) at (5,0) {\footnotesize $v_{2,5}$};
        
        \node[draw,circle,ultra thick, dashed] (31) at (8,6) {\footnotesize $v_{3,1}$};
        \node[draw,circle,thick, dashed] (32) at (8,4) {\footnotesize $v_{3,2}$};
        \node[draw,circle,thick, dashed] (33) at (8,2) {\footnotesize $v_{3,3}$};
        \node[draw,thick, circle, dashed] (34) at (7,0) {\footnotesize $v_{3,4}$};
        \node[draw,thick, circle, dashed] (35) at (9,0) {\footnotesize $v_{3,5}$};
        
        \node[draw,circle,ultra thick] (41) at (12,6) {\footnotesize $v_{4,1}$};
        \node[draw,circle,thick] (42) at (12,4) {\footnotesize $v_{4,2}$};
        \node[draw,thick, circle] (43) at (12,2) {\footnotesize $v_{4,3}$};
        \node[draw,circle,thick] (44) at (11,0) {\footnotesize $v_{4,4}$};
        \node[draw,circle,thick] (45) at (13,0) {\footnotesize $v_{4,5}$};
        
        \draw[thick]  (11) to node [auto] {} (12);
        \draw[thick]  (12) to node [auto] {} (13);
        \draw[thick]  (13) to node [auto] {} (14);
        \draw[thick]  (13) to node [auto] {} (15);
        
        \draw[thick]  (21) to node [auto] {} (22);
        \draw[thick]  (22) to node [auto] {} (23);
        \draw[thick]  (23) to node [auto] {} (24);
        \draw[thick]  (23) to node [auto] {} (25);
        
        \draw[thick, dashed]  (31) to node [auto] {} (32);
        \draw[thick, dashed]  (32) to node [auto] {} (33);
        \draw[thick, dashed]  (33) to node [auto] {} (34);
        \draw[thick, dashed]  (33) to node [auto] {} (35);
        
        \draw[thick]  (41) to node [auto] {} (42);
        \draw[thick]  (42) to node [auto] {} (43);
        \draw[thick]  (43) to node [auto] {} (44);
        \draw[thick]  (43) to node [auto] {} (45);
        
        \draw[ultra thick]  (11) to node [auto] {} (21);
        \draw[thick]  (12) to node [auto] {} (22);
        \draw[thick]  (13) to node [auto] {} (23);
        \draw[thick, bend right = 45]  (14) to node [auto] {} (24);
        \draw[thick, bend right = 45]  (15) to node [auto] {} (25);
        
        \draw[ultra thick]  (31) to node [auto] {} (21);
        \draw[thick]  (32) to node [auto] {} (22);
        \draw[thick]  (33) to node [auto] {} (23);
        \draw[thick, bend left = 45]  (34) to node [auto] {} (24);
        \draw[thick, bend left = 45]  (35) to node [auto] {} (25);
        
        \draw[ultra thick]  (31) to node [auto] {} (41);
        \draw[thick]  (32) to node [auto] {} (42);
        \draw[thick]  (33) to node [auto] {} (43);
        \draw[thick, bend right = 45]  (34) to node [auto] {} (44);
        \draw[thick, bend right = 45]  (35) to node [auto] {} (45);
    \end{tikzpicture}

    \caption{The product of $G = P_4$ and broom graph $H$.  The subgraph $G_1$ is bolded and the subgraph $H_3$ is dashed. Graph $H$ is an example of a weakly non-$3$-peripheral tree of odd diameter (see Definition \ref{defn:weak/strong non-3-per}) while the path $G$ is an example of a strongly non-$3$-peripheral graph of odd diameter.}
    
    \label{fig:cex}
\end{figure}

\begin{prop}[\cite{MW}]\label{proposition:dist}
If $v_{i,j},v_{h,k} \in V(G \square H)$, then \[\dist_{G\square H}(v_{i,j},v_{h,k}) = \dist_G(u_i,u_h) + \dist_H(w_j,w_k).\]
\end{prop}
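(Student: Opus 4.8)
The plan is to establish the identity by proving the two inequalities $\dist_{G\square H}(v_{i,j},v_{h,k}) \le \dist_G(u_i,u_h)+\dist_H(w_j,w_k)$ and $\dist_{G\square H}(v_{i,j},v_{h,k}) \ge \dist_G(u_i,u_h)+\dist_H(w_j,w_k)$ separately. The key structural fact to exploit is that, by the definition of the Cartesian product, every edge of $G\square H$ either changes the $G$-coordinate (via an edge of $G$) while leaving the $H$-coordinate fixed, or changes the $H$-coordinate (via an edge of $H$) while leaving the $G$-coordinate fixed. I will call these $G$-edges and $H$-edges, respectively.

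For the upper bound, I would fix a geodesic $u_i = x_0, x_1, \dots, x_p = u_h$ in $G$ with $p = \dist_G(u_i,u_h)$ and a geodesic $w_j = y_0, y_1, \dots, y_q = w_k$ in $H$ with $q = \dist_H(w_j,w_k)$. Concatenating the corresponding coordinate moves --- first walking $(x_0,w_j),(x_1,w_j),\dots,(x_p,w_j)$ and then $(x_p,y_0),(x_p,y_1),\dots,(x_p,y_q)$ --- produces a walk from $v_{i,j}$ to $v_{h,k}$ in $G\square H$, since each consecutive pair differs in exactly one coordinate by an edge of the appropriate factor. This walk has length $p+q$, which gives the desired upper bound.

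The lower bound is the more delicate step and the one I expect to require the most care. Given any $v_{i,j}$--$v_{h,k}$ walk $P$ in $G\square H$, I would project it onto each factor: deleting the $H$-edges and reading off the $G$-coordinates yields a walk in $G$ from $u_i$ to $u_h$, and symmetrically the $H$-edges yield a walk in $H$ from $w_j$ to $w_k$. Because every edge of $P$ is counted in exactly one of these two projections, the length of $P$ equals the number of $G$-edges plus the number of $H$-edges. Each projected walk has length at least the corresponding factor distance, so the total length is at least $\dist_G(u_i,u_h)+\dist_H(w_j,w_k)$; taking the minimum over all such walks gives the lower bound. The main obstacle is purely bookkeeping: one must verify that discarding the edges of the opposite type genuinely leaves a connected walk in the factor (consecutive retained vertices are either equal or adjacent in that factor), so that the ``length at least the distance'' comparison is valid. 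Combining the two inequalities yields the claimed equality.
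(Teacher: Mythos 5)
Your proof is correct. Note that the paper itself gives no proof of this proposition --- it is imported by citation from \cite{MW} --- so there is no in-paper argument to compare against; your two-inequality argument (concatenating factor geodesics for the upper bound, and classifying each edge of a walk as a $G$-edge or $H$-edge and projecting to obtain the lower bound) is the standard proof of the distributive distance formula for Cartesian products, and you correctly flag and handle the one genuine subtlety, namely that the projection of a walk remains a walk in the factor because consecutive projected vertices are either equal or adjacent.
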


Note that if $G$ and $H$ are graphs, then a direct consequence of Proposition \ref{proposition:dist} is that \[\diam(G\square H) = \diam(G) + \diam(H).\]

\begin{cor}[\cite{MW}]\label{cor:isosubprod}
    If $G'$ is an isometric subgraph of $G$ and $H'$ is an isometric subgraph of $H$, then $G'\square H'$  is an isometric subgraph of $G\square H$. 
\end{cor}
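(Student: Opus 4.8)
The plan is to reduce everything to Proposition \ref{proposition:dist}, applied twice. First I would observe that $G'\square H'$ is genuinely a subgraph of $G\square H$: if $u_i,u_h\in V(G')\subseteq V(G)$ and $w_j,w_k\in V(H')\subseteq V(H)$, then the vertex set of $G'\square H'$ sits inside that of $G\square H$, and any edge of $G'\square H'$ (arising from an edge of $G'$ or of $H'$) is also an edge of $G\square H$ by the definition of the Cartesian product. So the only real content is the isometry condition, namely that distances computed inside $G'\square H'$ agree with distances computed inside the larger product $G\square H$.

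To verify this, fix two vertices $v_{i,j},v_{h,k}\in V(G'\square H')$, so that $u_i,u_h\in V(G')$ and $w_j,w_k\in V(H')$. The key point is that Proposition \ref{proposition:dist} is stated for arbitrary graphs, so it applies equally to the product $G'\square H'$ and to the product $G\square H$. Applying it to each gives
\[
\dist_{G'\square H'}(v_{i,j},v_{h,k}) = \dist_{G'}(u_i,u_h) + \dist_{H'}(w_j,w_k),
\qquad
\dist_{G\square H}(v_{i,j},v_{h,k}) = \dist_{G}(u_i,u_h) + \dist_{H}(w_j,w_k).
\]
Now I would invoke the hypotheses directly: since $G'$ is isometric in $G$ we have $\dist_{G'}(u_i,u_h)=\dist_{G}(u_i,u_h)$, and since $H'$ is isometric in $H$ we have $\dist_{H'}(w_j,w_k)=\dist_{H}(w_j,w_k)$. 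Substituting these two equalities shows the two right-hand sides above coincide, hence $\dist_{G'\square H'}(v_{i,j},v_{h,k})=\dist_{G\square H}(v_{i,j},v_{h,k})$, which is exactly the isometry requirement.

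There is essentially no genuine obstacle here; the result is a formal consequence of the additivity of distance in Cartesian products. The one point worth stating carefully is that Proposition \ref{proposition:dist} must be legitimately usable for the \emph{smaller} product $G'\square H'$ and not only for $G\square H$; this is fine precisely because the proposition holds for any pair of graphs, and $G'\square H'$ is itself such a product. I would therefore present the argument in the two short steps above: the subgraph check, and the double application of Proposition \ref{proposition:dist} combined with the two isometry hypotheses.
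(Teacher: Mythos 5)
Your proof is correct and is exactly the intended derivation: the paper states this as a corollary of Proposition~\ref{proposition:dist} (citing \cite{MW}), and applying that distance formula to both $G'\square H'$ and $G\square H$ and then substituting the two isometry hypotheses is precisely how it follows. Your preliminary check that $G'\square H'$ is a subgraph of $G\square H$ is a reasonable addition but introduces no new ideas beyond the standard argument.
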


Corollary \ref{cor:isosubprod} is particularly useful, as it is often used to find a rainbow structure in the graph product by only considering a small subgraph that preserves distances. 
Lemma \ref{isometricpathorC3} takes this a step further when we have exact $3$-colorings.  
Since we are applying the lemma to products of trees, we know that $C_3$ subgraphs do not exist so an isometric path exists that has all three colors.  
In practice, we take a shortest such path which implies that endpoints are uniquely colored and all interior vertices colored the same. This allows us to find structure within an arbitrary coloring. 

\begin{lem}[\cite{RSW}]\label{isometricpathorC3}
    If $G$ is a connected graph on at least three vertices with an exact $r$-coloring $c$ where $r \ge 3$, then there exists a subgraph $G'$ in $G$ with at least three colors where $G'$ is either an isometric path or $G' = C_3$.
\end{lem}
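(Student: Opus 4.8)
The plan is to build the required subgraph outward from a pair of adjacent, differently colored vertices together with the closest vertex carrying a third color. First I would observe that since $G$ is connected and $c$ uses at least two colors, some edge $xy$ must satisfy $c(x)\neq c(y)$; otherwise every edge is monochromatic and connectivity forces $G$ to be monochromatic, a contradiction. Relabel the colors so that $c(x)=1$ and $c(y)=2$. Because $r\ge 3$, there is a vertex whose color lies outside $\{1,2\}$, so I may choose such a vertex $z$ minimizing $\min\bigl(\dist(x,z),\dist(y,z)\bigr)$, and assume $\dist(x,z)\le \dist(y,z)$. Writing $d=\dist(x,z)\ge 1$, the triangle inequality together with $x\sim y$ forces $\dist(y,z)\in\{d,d+1\}$.

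The engine of the argument is the minimality of $z$: every vertex colored outside $\{1,2\}$ lies at distance at least $d$ from both $x$ and $y$. Hence, along any geodesic $P=(x=p_0,\dots,p_d=z)$, the vertices $p_0,\dots,p_{d-1}$ sit at distance less than $d$ from $x$ and so are colored in $\{1,2\}$. If any of them is colored $2$, then $P$ is an isometric path carrying colors $1$, $2$, and $c(z)$, and we are done; otherwise $p_0,\dots,p_{d-1}$ are all colored $1$. When $\dist(y,z)=d+1$, the path $(y,x=p_0,\dots,p_d=z)$ has length $\dist(y,z)$ and is therefore a geodesic, hence isometric, and it displays colors $2,1,c(z)$, finishing this case immediately.

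The remaining, and I expect hardest, case is the tie $\dist(y,z)=\dist(x,z)=d$, where the two ``sides'' of the configuration compete symmetrically and no single geodesic is obviously favored. Here I would run the same reasoning on a geodesic $Q=(y=q_0,\dots,q_d=z)$: either some $q_i$ is colored $1$, giving a three-colored isometric path, or $q_0,\dots,q_{d-1}$ are all colored $2$. In the surviving subcase, the neighbors $p_{d-1}$ (color $1$) and $q_{d-1}$ (color $2$) of $z$ (color $c(z)$) furnish the conclusion via a clean dichotomy: if $p_{d-1}\sim q_{d-1}$, then $\{p_{d-1},q_{d-1},z\}$ induces a $C_3$ with three colors, and otherwise $\dist(p_{d-1},q_{d-1})=2$, so the path $(p_{d-1},z,q_{d-1})$ realizes that distance, is isometric, and again shows three colors. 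This dichotomy is precisely the point at which both permitted outcomes of the lemma, an isometric path and a $C_3$, become genuinely necessary, and handling it carefully (including the degenerate $d=1$ situation, where $p_{d-1}=x$ and $q_{d-1}=y$ are adjacent and directly yield the triangle) is the crux of the proof.
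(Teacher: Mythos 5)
Your proof is correct and complete: the minimality of $z$ does force every interior vertex of the $x$--$z$ and $y$--$z$ geodesics into $\{1,2\}$, geodesics are indeed isometric paths, the extension $(y,x=p_0,\dots,p_d=z)$ is a geodesic when $\dist(y,z)=d+1$, and in the tie case your dichotomy between $p_{d-1}\sim q_{d-1}$ (a three-colored $C_3$) and $\dist(p_{d-1},q_{d-1})=2$ (a three-colored isometric path of length two) exhausts all possibilities, including the degenerate $d=1$ situation. Note that the paper itself does not prove this lemma but imports it from \cite{RSW}; your argument is the standard extremal one --- a bichromatic edge plus a nearest third-colored vertex --- and is in the same spirit as the source, so there is nothing to flag beyond the minor observation that the ``without loss of generality'' swap of $x$ and $y$ deserves the one-line remark that colors $1$ and $2$ play symmetric roles throughout.
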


\begin{lem}[\cite{MW}]\label{|c(Hi)|<3}
    If $G$ and $H$ are connected, $|G|,|H| \ge 2$ and $c$ is an exact $r$-coloring of $G\square H$, $3\le r$, that avoids rainbow $3$-APs, then $|c(V(G_i))| \leq 2$ for $1 \leq i \leq |H|$.
\end{lem}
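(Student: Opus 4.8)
The plan is to prove the contrapositive: assuming the exact $r$-coloring $c$ (with $r\ge 3$) has no rainbow $3$-AP, I show each copy $G_i$ uses at most two colors. Suppose instead $|c(V(G_i))|\ge 3$ for some $i$. Since the one-vertex subgraph $\{w_i\}$ is trivially isometric in $H$, Corollary \ref{cor:isosubprod} gives that $G_i$ is isometric in $G\square H$, so any $3$-AP found inside $G_i$ is a genuine $3$-AP of $G\square H$, and distances across copies are computed via Proposition \ref{proposition:dist}. Applying Lemma \ref{isometricpathorC3} to $G_i$ produces a three-colored subgraph that is either a $C_3$ or an isometric path. In the $C_3$ case, the three mutually adjacent, distinctly colored vertices form a rainbow $3$-AP of common difference $1$, a contradiction, so I may assume the subgraph is an isometric path.

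Next I would take a shortest isometric path $P=(p_0,\dots,p_m)$ in $G_i$ using at least three colors and extract its normal form from minimality: the endpoint colors $c(p_0)=a$ and $c(p_m)=b$ are each unique along $P$ (else deleting an endpoint leaves a shorter three-colored path), and all interior vertices share one color $c$ (else the initial segment reaching a second interior color is a shorter three-colored path). Thus $a,b,c$ are distinct, $p_0=a$, $p_m=b$, $p_1=\dots=p_{m-1}=c$, and $m\ge 2$. If $m$ is even, then $p_{m/2}$ is interior and $(p_0,p_{m/2},p_m)$ has common difference $m/2$ and colors $a,c,b$, a rainbow $3$-AP; contradiction. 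Hence $m$ is odd and $m\ge 3$.

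The odd case is the crux, and here I leave $G_i$. Let $w_{i'}$ be a neighbor of $w_i$ in $H$ (one exists since $H$ is connected with $|H|\ge 2$), and for each $k$ let $q_k$ be the vertex of $G_{i'}$ directly across from $p_k$, so that $\dist(p_j,q_k)=|j-k|+1$ by Proposition \ref{proposition:dist}. The key is that the graph definition of a $3$-AP constrains only \emph{consecutive} distances, which supplies short progressions pinning down the shadow colors. The progressions $(p_0,p_m,q_1)$ (common difference $m$, colors $a,b,c(q_1)$) and $(p_0,p_1,q_1)$ (common difference $1$, colors $a,c,c(q_1)$) force $c(q_1)\in\{a,b\}\cap\{a,c\}=\{a\}$; symmetrically $(p_m,p_0,q_{m-1})$ and $(p_m,p_{m-1},q_{m-1})$ force $c(q_{m-1})=b$. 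Finally, since $m\ge 3$ the vertex $p_2$ is interior with color $c$, and $(q_1,q_{m-1},p_2)$ has common difference $m-2\ge 1$ and colors $a,b,c$, a rainbow $3$-AP and the desired contradiction.

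The main obstacle is exactly the parity issue handled in the last paragraph: when the shortest three-colored isometric path has odd length, its two uniquely colored endpoints lie at odd distance, so no integer index $j$ satisfies $j=m-j$ and the rainbow progression cannot be centered on an interior vertex of $P$; moreover, centering on a neighboring copy merely shifts both distances by one and preserves the parity. The resolution leans on the freedom that the outer two terms of a graph $3$-AP are unconstrained, letting the distinctly colored $p_0$ and $p_m$ be non-consecutive terms of a long progression whose third term sits in $G_{i'}$; combined with the two length-one progressions this determines $c(q_1)$ and $c(q_{m-1})$ and yields the final rainbow. In writing this up I would verify the boundary case $m=3$ (where $q_{m-1}=q_2$ and the final progression has common difference $1$) to confirm all indices and vertices remain distinct and valid.
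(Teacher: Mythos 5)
Your proof is correct: the normal form of the shortest isometric three-colored path is justified, every distance computation checks against Proposition \ref{proposition:dist} (including $\dist(p_j,q_k)=|j-k|+1$ and the boundary case $m=3$), and the three cross-copy $3$-APs genuinely force $c(q_1)=a$ and $c(q_{m-1})=b$, yielding the rainbow $(q_1,q_{m-1},p_2)$. The paper imports this lemma from \cite{MW} without reproducing a proof, and your argument follows essentially the same standard route the paper alludes to after Lemma \ref{isometricpathorC3}---a shortest isometric path with uniquely colored endpoints and monochromatic interior, a parity argument ruling out even length, and $3$-APs shifted into an adjacent copy of $G$---so there is no substantive divergence to report.
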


The following results are used to derive structural properties of rainbow-free colorings.

\begin{cor}[\cite{MW}]\label{|c(V(Gi U Gj))|<3}
    If $G$ and $H$ are connected graphs, $|G| \ge 2$, $|H|\ge 3$, $c$ is an exact, rainbow-free $r$-coloring of $G\square H$ with $r\ge 3$, and $v_iv_j \in E(H)$, then \[|c(V(G_i)\cup V(G_j))| \leq 2.\]
\end{cor}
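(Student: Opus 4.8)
The plan is to argue by contradiction: assume $c$ is rainbow-free but $|c(V(G_i)\cup V(G_j))|\ge 3$, and produce a rainbow $3$-AP. The first step is to bring in a third copy of $G$. Since $H$ is connected with $|H|\ge 3$ and $v_iv_j\in E(H)$, some vertex $v_h\notin\{v_i,v_j\}$ is adjacent to $v_i$ or to $v_j$; after possibly swapping the roles of $i$ and $j$ (and correspondingly relabeling colors) I may assume $v_h\sim v_i$. Two families of constraints are used throughout. First, by Lemma~\ref{|c(Hi)|<3}, together with its mirror image obtained from $G\square H\cong H\square G$, every copy $G_p$ and every copy $H_k$ uses at most two colors. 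Second, for each vertex $u_k$ the three vertices $v_{k,h},v_{k,i},v_{k,j}$ lie on the single copy $H_k$ and form a $3$-AP of common difference $1$, so $\{c(v_{k,h}),c(v_{k,i}),c(v_{k,j})\}$ is never rainbow.

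Write $C_i=c(V(G_i))$ and $C_j=c(V(G_j))$, so that $|C_i|,|C_j|\le 2$ while $|C_i\cup C_j|\ge 3$; hence one of them, say $C_i$, has size $2$. The easy configurations are those in which some color of $C_j$ lies outside $C_i=\{1,2\}$: since $G$ is connected and $G_i$ uses both colors, there is an edge $u_ku_l$ with $c(v_{k,i})=1$ and $c(v_{l,i})=2$, and then the length-two path $v_{k,j},v_{k,i},v_{l,i}$ is a rainbow $3$-AP as soon as $c(v_{k,j})\notin\{1,2\}$. This disposes of every case except the \emph{shared-color case}, where after relabeling $C_i=\{1,2\}$ and $C_j=\{2,3\}$ share exactly the color $2$; this is the heart of the argument.

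In the shared-color case color $1$ occurs only in $G_i$ and color $3$ only in $G_j$, so each $u_k$ has a \emph{type} $(c(v_{k,i}),c(v_{k,j}))\in\{1,2\}\times\{2,3\}$. I first rule out the type $(1,3)$: if some $u_k$ had it, the progressions $v_{k,j},v_{k,i},v_{l,i}$ and $v_{k,i},v_{k,j},v_{l,j}$ would force every neighbor $u_l$ to have type $(1,3)$ as well, and by connectivity every vertex would, making $G_i$ monochromatic and contradicting $2\in C_i$. Thus the only types are $P=(1,2)$, $Q=(2,2)$, $R=(2,3)$, with $P$ and $R$ nonempty, and no $P$-vertex is adjacent to an $R$-vertex, since an edge from a $1$ in $G_i$ to a $3$ in $G_j$ would yield the rainbow $1,2,3$. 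Using the mirror lemma again, for a $P$-vertex $a$ the whole fiber $H_a$ is colored from $\{1,2\}$ and for an $R$-vertex $b$ the whole fiber $H_b$ from $\{2,3\}$; in particular $c(v_{a,h})\in\{1,2\}$ and $c(v_{b,h})\in\{2,3\}$. Combining these with $|c(V(G_h))|\le 2$ and the cross-copy progressions $v_{k,h},v_{l,h},v_{l,i}$ (which forbid a $1$–$3$ boundary edge of $G_h$ lying over a $P$-vertex) rules out $c(V(G_h))=\{1,3\}$, so $G_h$ must use color $2$.

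To extract the rainbow I use a \emph{diagonal} progression through the three copies. Because $P$ and $R$ are nonempty and non-adjacent in the connected graph $G$, a shortest $P$–$R$ path passes through a $Q$-vertex, giving a geodesic $u_a\sim u_c\sim u_b$ with $u_a\in P$, $u_c\in Q$, $u_b\in R$. Since $v_h\sim v_i\sim v_j$, the triple $v_{a,h},v_{c,i},v_{b,j}$ satisfies $\dist(v_{a,h},v_{c,i})=\dist(v_{c,i},v_{b,j})=2$, so it is a $3$-AP with colors $c(v_{a,h}),\,2,\,3$, which is rainbow precisely when $c(v_{a,h})=1$. The crux, and the step I expect to be the main obstacle, is to show that the residual freedom in coloring $G_h$ cannot defeat all such diagonals at once: one must play the constraint $c(v_{a,h})\in\{1,2\}$ on $P$-vertices against the cross-copy progressions between $G_h$ and $G_i$ and the bound $|c(V(G_h))|\le 2$, so that forcing $c(v_{a,h})=2$ on every $P$-end of a $P$–$Q$–$R$ geodesic pushes color $1$ or $3$ into $G_h$ in a way that recreates a rainbow $1,2,3$ now involving $G_h$ and $G_i$. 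Organizing this final forcing, and checking the analogous longer-range ($d=2$) progressions that appear when the $P$–$R$ geodesic is longer, is where the real work lies.
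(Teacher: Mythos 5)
Your structural groundwork is sound as far as it goes: deriving from Lemma~\ref{|c(Hi)|<3} (and its transpose for the fibers $H_k$) that the only surviving configuration has $C_i=\{1,2\}$, $C_j=\{2,3\}$, that type $(1,3)$ propagates to all of $G$ and is thus impossible, that no $P$-vertex is adjacent to an $R$-vertex, and that $c(v_{a,h})\in\{1,2\}$ over $P$ while $c(v_{b,h})\in\{2,3\}$ over $R$ --- all of these are correct consequences of the $d=1$ progressions and the two-color bounds. You are also right that the third copy $G_h$ must be invoked: the statement is false for $|H|=2$, as witnessed by coloring $P_3\square P_2$ with columns $(1,2,2)$ and $(2,2,3)$, which is rainbow-free (consistent with $\aw(P_3\square P_2,3)=4$ from Theorem~\ref{PmxPn}) and realizes exactly your $P$--$Q$--$R$ configuration. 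So no argument confined to $G_i\cup G_j$ can succeed, and the shared-color case is indeed the heart of the matter. (Note the paper itself imports this corollary from \cite{MW} without proof, so the comparison here is against completeness, not against a proof on the page.)

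The problem is that your proposal stops precisely at that heart, and the two steps you defer are not routine. First, your claim that a shortest $P$--$R$ path yields $u_a\sim u_c\sim u_b$ with $u_a\in P$, $u_c\in Q$, $u_b\in R$ only holds when $\dist_G(P,R)=2$; in general the interior of the geodesic is a string of $Q$-vertices and your diagonal $3$-AP $(v_{a,h},v_{c,i},v_{b,j})$, which needs consecutive adjacencies, does not exist --- the ``longer-range progressions'' you mention are never constructed, and it is not clear what they should be (in the general setting $d_H(v_h,v_j)$ may be $1$ or $2$, so even the symmetric diagonals behave differently). Second, and decisively, even when the length-two triple exists, the adversary kills your diagonal by setting $c(v_{a,h})=2$, and you give no mechanism forcing color $1$ or $3$ into $G_h$ so as to recreate a rainbow; you explicitly flag this as ``where the real work lies.'' That forcing is the entire content of the corollary beyond Lemma~\ref{|c(Hi)|<3}, and experimentation suggests it genuinely requires progressions of larger common difference than the $d\in\{1,2\}$ ones you use: for instance, extending the $P_3\square P_2$ example to $P_3\square P_3$ by making $G_h$ monochromatic in color $2$ defeats every $d\le 2$ progression, and the rainbow that must exist (by Theorem~\ref{PmxPn}) is a $3$-AP of common difference $3$, exploiting the paper's convention that a $3$-AP only constrains consecutive distances. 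As written, then, the proposal is a correct reduction plus an honest plan, but not a proof: the announced crux is open, and one intermediate claim (the length-two $P$--$Q$--$R$ geodesic) is false as stated.
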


\begin{prop}[\cite{MW}]\label{prop:everycopy,|H|>2}
    If $G$ and $H$ are connected graphs, $|G| \ge 2$, $|H|\ge 3$, $c$ is an exact, rainbow-free $r$-coloring of $G\square H$ with $r\ge 3$, then there is a color in $c(G\square H)$ that appears in every copy of $G$.
\end{prop}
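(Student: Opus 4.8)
The plan is to recast everything in terms of the color sets of the two families of copies and then run a short combinatorial argument fueled entirely by the structural results already in hand. For each copy $G_i$ set $A_i = c(V(G_i))$, and for each copy $H_j$ set $B_j = c(V(H_j))$. Lemma \ref{|c(Hi)|<3} gives $|A_i|\le 2$, and by the symmetry of the Cartesian product (apply the same lemma to $H\square G\cong G\square H$, legitimate since $|G|,|H|\ge 2$ and $c$ is still rainbow-free) it also gives $|B_j|\le 2$. Corollary \ref{|c(V(Gi U Gj))|<3} gives $|A_i\cup A_j|\le 2$ whenever $ij\in E(H)$. The last ingredient is trivial but essential: the entry $c(v_{j,i})$ lies in both $A_i$ and $B_j$, so $A_i\cap B_j\neq\emptyset$ for all $i,j$. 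The target is to exhibit a color in $\bigcap_i A_i$.

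First I would produce a color lying in every $B_j$. Suppose no such color exists. Then no $A_i$ can be a singleton, for a singleton $A_i=\{x\}$ meets every $B_j$, forcing $x\in B_j$ for all $j$. Hence every $A_i$ has size exactly $2$; but any two $H$-adjacent copies then satisfy $|A_i\cup A_j|\le 2$ with both sets of size $2$, so $A_i=A_j$, and connectedness of $H$ forces all $A_i$ equal to a single $2$-set. That makes $|c(G\square H)|\le 2$, contradicting $r\ge 3$. So some color $a$ lies in $\bigcap_j B_j$.

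Second, I would promote $a$ to a color present in every $G_i$. If some $B_j=\{a\}$, then every $A_i$ meets $\{a\}$ and we are finished, so assume every $B_j=\{a,z_j\}$. Suppose $a\notin A_{i^*}$ for some copy. Since $A_{i^*}$ (size $\le 2$, without $a$) must meet each $\{a,z_j\}$, it contains every $z_j$; as $r\ge 3$ there are at least two distinct values among the $z_j$, forcing $A_{i^*}=\{z,z'\}$ and the whole palette to be exactly $\{a,z,z'\}$. Then for each neighbor $i'$ of $i^*$ the bound $|A_{i^*}\cup A_{i'}|\le 2$ gives $A_{i'}\subseteq\{z,z'\}$, so $a\notin A_{i'}$ and, by the same reasoning, $A_{i'}=\{z,z'\}$; propagating across the connected graph $H$ makes every $A_i=\{z,z'\}$. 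But then $a$ appears in no copy of $G$, hence nowhere, contradicting $a\in B_j$. Therefore $a\in A_i$ for all $i$, as required.

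The conceptual crux, and the point I expect to have to defend, is that the adjacency bound $|A_i\cup A_j|\le 2$ together with connectedness is by itself \emph{not} enough to force a common color: a sliding sequence such as $\{b,c\},\{c\},\{a,c\},\{a\},\{a,b\}$ along a path satisfies it yet has empty intersection. What eliminates such configurations is precisely the interaction with the $H$-copies---through the hitting property, the bound $|B_j|\le 2$ forbids the singleton "bridges" unless they carry a universal color---and this is exactly what the two stages above exploit. I would also be careful to justify the symmetric application of Lemma \ref{|c(Hi)|<3} that bounds $|B_j|$, and to note that the analogous bound on $|B_j\cup B_{j'}|$ is unavailable when $|G|=2$ and is deliberately never invoked.
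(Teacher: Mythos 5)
Your argument is correct, but there is nothing in this paper to compare it against line by line: Proposition \ref{prop:everycopy,|H|>2} is imported from \cite{MW} and stated here without proof, and the only related proof in this paper is that of Proposition \ref{prop:everycopy}, which handles $|H|=2$ by a direct rainbow-$3$-AP argument on two adjacent copies and simply invokes \ref{prop:everycopy,|H|>2} for $|H|\geq 3$. What you have done is reconstruct the $|H|\geq 3$ case from the two quoted structural facts alone, and the reconstruction holds up: the symmetric application of Lemma \ref{|c(Hi)|<3} to get $|B_j|\leq 2$ is legitimate (it needs only $|G|,|H|\geq 2$ and the isomorphism $G\square H\cong H\square G$); the hitting property $A_i\cap B_j\neq\emptyset$ is immediate from $c(v_{j,i})$; stage one correctly rules out a singleton $A_i$ (which would hand you a universal color in every $B_j$) and then uses Corollary \ref{|c(V(Gi U Gj))|<3} with connectedness of $H$ to force all $A_i$ equal to one $2$-set, contradicting exactness with $r\geq 3$; stage two correctly splits on whether some $B_j=\{a\}$, and in the remaining case the observation that the palette equals $\{a\}\cup\{z_j\}$ (every vertex lies in some $H_j$) is exactly what forces two distinct $z_j$'s, pins $A_{i^*}=\{z,z'\}$, and lets the adjacency bound propagate $a$-freeness across $H$ until $a$ is used nowhere, a contradiction. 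Your closing remark is also the right diagnosis: the adjacency bound plus connectedness alone admits sliding chains with empty global intersection, and it is precisely the transverse information from the $H$-copies ($|B_j|\leq 2$ plus the hitting property) that kills them; note too that you correctly avoid the unavailable bound on $|B_j\cup B_{j'}|$ when $|G|=2$. In short: a correct, self-contained derivation of a result this paper only cites, somewhat more set-combinatorial in flavor than the short adjacency arguments the paper uses elsewhere for results of this type.
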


The following proposition guarantees existence of a dominating color in a rainbow-free coloring. This observation is helpful in constructing such coloring, when it exists.

\begin{prop}\label{prop:everycopy}
    If $G$ and $H$ are connected graphs, $|G| \ge 2$, $|H|\ge 2$, $c$ is an exact, rainbow-free $r$-coloring of $G\square H$ with $r\ge 3$, then there is a color in $c(G\square H)$ that appears in every copy of $G$.
\end{prop}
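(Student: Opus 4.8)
The plan is to reduce to the case already in hand. Proposition~\ref{prop:everycopy,|H|>2} proves exactly this conclusion under the stronger hypothesis $|H| \ge 3$, so the only genuinely new content is the case $|H| = 2$. I would therefore split into two cases: when $|H| \ge 3$, quote Proposition~\ref{prop:everycopy,|H|>2} verbatim, and when $|H| = 2$, argue directly.

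For $|H| = 2$, connectedness of $H$ forces $H \cong P_2$, so $G \square H$ consists of exactly two copies $G_1$ and $G_2$ joined by the perfect matching of ``vertical'' edges $v_{j,1}v_{j,2}$, each of length $1$. The claim becomes $c(V(G_1)) \cap c(V(G_2)) \neq \emptyset$, and I would prove this by contradiction, assuming the two color sets are disjoint.

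The key step is to exploit short arithmetic progressions that straddle both copies. Since $|G| \ge 2$ and $G$ is connected, fix any edge $u_iu_h \in E(G)$. Then $(v_{i,1}, v_{i,2}, v_{h,2})$ is a $3$-AP with common difference $1$: indeed $\dist(v_{i,1},v_{i,2}) = 1$ (a vertical edge) and $\dist(v_{i,2},v_{h,2}) = \dist_G(u_i,u_h) = 1$ by Proposition~\ref{proposition:dist}. Its first vertex lies in $G_1$ and its last two lie in $G_2$, so under the disjointness assumption $c(v_{i,1})$ differs from both $c(v_{i,2})$ and $c(v_{h,2})$. Because $c$ is rainbow-free, the only way to avoid a rainbow here is $c(v_{i,2}) = c(v_{h,2})$. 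As this holds across every edge of the connected graph $G$, the copy $G_2$ must be monochromatic; the symmetric progression $(v_{i,2}, v_{i,1}, v_{h,1})$ forces $G_1$ to be monochromatic as well. Then $c$ uses at most two colors in total, contradicting $r \ge 3$, and the desired common color exists.

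I do not anticipate a serious obstacle: the heart of the matter is simply the existence of the straddling $3$-AP above, after which connectedness does all the work. The one point to verify carefully is that the three vertices in each progression are genuinely distinct---they are, since they lie in different copies or correspond to different vertices of $G$---so that ``rainbow'' is well defined.
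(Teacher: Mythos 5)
Your proposal is correct and takes essentially the same approach as the paper: it makes the identical case split, quotes Proposition~\ref{prop:everycopy,|H|>2} for $|H|\ge 3$, and for $|H|=2$ uses the same straddling common-difference-$1$ $3$-AP $(v_{i,1},v_{i,2},v_{h,2})$ under the assumption that $c(V(G_1))$ and $c(V(G_2))$ are disjoint. The paper runs the same idea in the reverse direction (pigeonhole yields a copy with a bichromatic edge, whose straddling $3$-AP is then rainbow), while you derive that both copies are monochromatic and contradict $r\ge 3$; the two formulations are logically interchangeable.
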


\begin{proof}
    The case of $|H|\geq 3$ is handled by \ref{prop:everycopy,|H|>2}.
    So, suppose $|H|=2$, say $V(H) = \{w_1,w_2\}$. Assume $G_1$ and $G_2$ do not share a color. The pigeonhole principle implies that some copy of $G$ must have at least two colors, say $G_1$ has at least two colors.  Then there must exist adjacent vertices $v_{1,k},v_{1,\ell} \in V(G_1)$ such that $c(v_{1,k})\neq c(v_{1,\ell})$. Since $G_1$ and $G_2$ do not share a color, the $3$-AP $(v_{1,k},v_{1,\ell},v_{2,\ell})$ is rainbow, a contradiction. Thus, $G_1$ and $G_2$ share a color, as desired.
\end{proof}

Many conditions in this paper are about peripheral vertices, which we define next. For common graph theory terminology, see \cite{West}.
For a vertex $v$ in a connected graph $G$ the {\it eccentricity} of $v$, denoted $\epsilon(v)$, is the distance between $v$ and a vertex furthest from $v$ in $G$. 
If a vertex has minimum eccentricity, we call it a {\it central vertex}.  
The {\it radius} of $G$, denoted $\operatorname{rad}(G)$, is the eccentricity of any central vertex. 
The collection of all central vertices in $G$ is the {\it center} of $G$. 
If a vertex $v$ has $\epsilon(v) = \diam(G)$ we call $v$ a {\it peripheral} vertex. 
If a graph $G$ contains vertices $u_1,\ldots, u_n$ such that $\dist(u_i,u_j) = \diam(G)$ for all distinct $i,j \in \{1,\ldots,n\}$, then we call $G$ {\it $n$-peripheral}. 
A graph is {\it non-$n$-peripheral} if we cannot find $n$ vertices that are pairwise diameter away from each other.
Specifically, we focus on graphs that are $3$-peripheral and graphs that are non-$3$-peripheral.

\begin{thm}[\cite{BMSWW}]\label{TxG 3-per}
    If $T$ is a $3$-peripheral tree and $G$ is connected with $2\le |G|$, then \[\aw(T \square G, 3) = 3.\]
\end{thm}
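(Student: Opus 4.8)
The plan is to prove the two inequalities $3 \le \aw(T\square G,3)$ and $\aw(T\square G,3)\le 3$ separately. The lower bound is immediate: a $2$-coloring uses only two colors, so no three vertices can be pairwise distinctly colored and hence no rainbow $3$-AP exists; thus $\aw(T\square G,3)\ge 3$. The substance is the upper bound, i.e.\ showing that every exact $3$-coloring $c$ of $T\square G$ contains a rainbow $3$-AP. First I would record the geometry forced by $3$-peripherality. If $u_1,u_2,u_3$ are pairwise at distance $\diam(T)=:d$, then writing their median as $m$ one checks that $d$ is even, that $m$ is the unique center of $T$, and that $\dist_T(m,u_i)=d/2=\rad(T)$ for each $i$. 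The engine of the argument is the following consequence of Proposition \ref{proposition:dist}: if $g_1,g_2,g_3\in V(G)$ satisfy $\dist_G(g_1,g_2)=\dist_G(g_2,g_3)$, then for any indices with $i_1\ne i_2$ and $i_2\ne i_3$ the triple $\big((u_{i_1},g_1),(u_{i_2},g_2),(u_{i_3},g_3)\big)$ is a $3$-AP of common difference $d+\dist_G(g_1,g_2)$, since distinct peripheral vertices are always exactly $d$ apart. In particular, for fixed $g$ the triple $((u_1,g),(u_2,g),(u_3,g))$ is a $3$-AP of difference $d$, and across copies I am free to permute the peripheral coordinates.

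Next, assume for contradiction that $c$ is an exact $3$-coloring with no rainbow $3$-AP. By Lemma \ref{|c(Hi)|<3}, every copy $T\times\{g\}$ uses at most two colors, and by Proposition \ref{prop:everycopy} some color---say color $1$---appears in every copy. Hence each copy is colored from $\{1\}$, $\{1,2\}$, or $\{1,3\}$, and no single copy contains both colors $2$ and $3$. When $|G|\ge 3$, Corollary \ref{|c(V(Gi U Gj))|<3} shows that two adjacent copies together use at most two colors; combined with the fact that color $1$ lies in every copy, this forbids a $\{1,2\}$-copy from being adjacent to a $\{1,3\}$-copy. Thus the copies carrying color $2$ and the copies carrying color $3$ are both nonempty but are separated in $G$ by copies colored only with color $1$.

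With this skeleton in hand, the remaining task is to manufacture a rainbow $3$-AP directly. Choosing a vertex of color $2$ and a vertex of color $3$, I would use the engine above to thread a $3$-AP through these two vertices and a vertex of color $1$: the freedom to assign any of $u_1,u_2,u_3$ to each slot, together with the freedom to choose an equal-distance triple $g_1,g_2,g_3$ in the connected graph $G$, supplies enough degrees of freedom to equalize the two consecutive distances while landing on the three required colors. The case $|G|=2$, where Corollary \ref{|c(V(Gi U Gj))|<3} does not apply, I would treat by hand on the prism $T\square K_2$ using the difference-$d$ and difference-$(d+1)$ progressions produced by the engine.

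The main obstacle is precisely this last step: a priori the vertices realizing colors $2$ and $3$ need not sit on the peripheral vertices $u_1,u_2,u_3$ or on the center $m$, so the engine cannot be applied to them verbatim. To overcome this I would first relocate the special colors onto the peripheral/central skeleton---using Lemma \ref{isometricpathorC3} to extract, inside an offending copy, an isometric path carrying three colors whose endpoints can be pushed out to peripheral vertices along a diametral path of $T$ (which is isometric, and whose product with an isometric geodesic of $G$ is isometric by Corollary \ref{cor:isosubprod})---and only then run the equal-distance-triple construction. Verifying that this relocation is always possible, even across the color-$1$-separated configuration permitted by the skeleton, is the crux of the proof.
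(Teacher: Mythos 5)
Your lower bound and structural skeleton are fine (and match the standard toolkit: Lemma \ref{|c(Hi)|<3}, Proposition \ref{prop:everycopy}, Corollary \ref{|c(V(Gi U Gj))|<3} give the ``every copy is $\{1\}$, $\{1,2\}$ or $\{1,3\}$, and the $\{1,2\}$- and $\{1,3\}$-copies are separated'' picture), but the proposal stops exactly where the proof begins, and you say so yourself: the ``relocation'' of colors $2$ and $3$ onto the peripheral/central skeleton is deferred as ``the crux.'' That deferral is a genuine gap, not a detail. Your engine produces $3$-APs only between vertices whose $T$-coordinates lie in the fixed triple $\{u_1,u_2,u_3\}$, while a rainbow-free coloring can place the unique occurrences of colors $2$ and $3$ at arbitrary non-peripheral positions (e.g.\ at a leaf hanging off the center of a spider with three legs of length $2$, which is still $3$-peripheral); nothing in your outline forces a second occurrence of such a color at a peripheral coordinate. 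The relocation sketch as written is also internally inconsistent: in a rainbow-free coloring every copy of $T$ and every copy of $G$ carries at most two colors by Lemma \ref{|c(Hi)|<3}, so there is no ``offending copy'' from which Lemma \ref{isometricpathorC3} can extract a three-colored path --- that lemma must be applied to all of $T\square G$, where the shortest three-colored isometric path has mixed coordinates and no mechanism you describe pushes its endpoints outward. (Two smaller loose ends: $G$ need not be bipartite, so the extraction may return a $C_3$ --- harmless, since a rainbow $C_3$ is itself a rainbow $3$-AP with $d=1$, but you never say this --- and the $|G|=2$ prism case is only promised ``by hand.'')

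Note also that this paper does not reprove the statement; it imports it from \cite{BMSWW}, so the comparison is with the proof there, whose entire content is the step you skipped. A tool your engine is missing, and which the $3$-peripheral hypothesis is designed to supply, is the family of \emph{within-copy} progressions through the equidistant vertex: by Lemma \ref{lem:3-peripheral even diam} there is a vertex $m$ with $\dist_T(m,u_i)=\diam(T)/2$ for all $i$, so $\bigl((u_i,g),(m,g),(u_j,g)\bigr)$ is a $3$-AP of common difference $\diam(T)/2$ inside a single copy of $T$; your engine only yields differences at least $\diam(T)$ and only across copies. Progressions of this short type (and mixed variants with $\dist_G$ contributions) are what force the colors at the skeleton positions in every copy and convert the separated $\{1,2\}/\{1,3\}$ configuration into a rainbow $3$-AP. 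Until that forcing argument is actually carried out --- including the $|G|=2$ case --- the proposal is a correct catalogue of known lemmas plus an unproved claim, not a proof.
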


\begin{thm}[\cite{BMSWW}]\label{theorem:treeprododd}
If $T$ and $T'$ are trees which are non-$3$-peripheral with $|T|,|T'| \geq 2$ and $\diam(T \square T')$ is odd, then $\aw(T \square T', 3) = 4$.  
\end{thm}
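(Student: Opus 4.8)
The plan is to prove the two bounds $\aw(T\square T',3)\le 4$ and $\aw(T\square T',3)\ge 4$ separately. The upper bound is immediate from Theorem \ref{theorem:rsw}, since $T$ and $T'$ are connected with at least two vertices. All the work lies in the lower bound, which amounts to exhibiting an exact $3$-coloring of $T\square T'$ that avoids rainbow $3$-APs.

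First I would use the parity hypothesis to pin down the structure of the factors. By Proposition \ref{proposition:dist}, $\diam(T\square T')=\diam(T)+\diam(T')$, and since this is odd, exactly one factor has even diameter; say (without loss of generality) $\diam(T)=2r$ and $\diam(T')$ is odd. A tree of odd diameter can never be $3$-peripheral: three vertices pairwise at an odd distance $D$ would force the sum of their distances to their median to equal $3D/2$, which is not an integer. Hence the hypothesis that both trees are non-$3$-peripheral carries content only for $T$. By Jordan's theorem $T$ has a single central vertex $c$; since $c$ lies on a diametral path, at least two branches at $c$ reach depth $r$, and non-$3$-peripherality forces exactly two. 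Writing $P_T,Q_T$ for the peripheral (depth-$r$) vertices in these two branches, the vertices of $T$ at distance $\diam(T)$ from a given $\alpha\in P_T$ are exactly $Q_T$, and symmetrically; that is, $P_T$ and $Q_T$ are \emph{mutually antipodal}. Likewise the central edge of $T'$ splits it into two halves whose peripheral-vertex sets $P_{T'},Q_{T'}$ are mutually antipodal (here non-$3$-peripherality is automatic).

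The coloring I would then use is: assign color $2$ to $P:=P_T\times P_{T'}$, color $3$ to $Q:=Q_T\times Q_{T'}$, and color $1$ to every remaining vertex. By Proposition \ref{proposition:dist}, a vertex at distance $D:=\diam(T\square T')$ from $(\alpha,\alpha')\in P$ must sit at distance $\diam(T)$ from $\alpha$ and $\diam(T')$ from $\alpha'$, so the antipode set of any point of $P$ is exactly $Q$, and conversely; moreover any $\alpha\in P$ and $\beta\in Q$ satisfy $\dist(\alpha,\beta)=D$, which is odd. The center vertex $c$ witnesses that color $1$ is used, so the coloring is exact. To check it is rainbow-free, note that a rainbow $3$-AP must use one vertex of each color, say $\alpha\in P$, $\beta\in Q$, and a color-$1$ vertex $\gamma$, and I would split on which vertex is the midpoint. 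If $\gamma$ is the midpoint then $\alpha,\beta$ are the endpoints, forcing $\dist(\gamma,\alpha)=\dist(\gamma,\beta)$; but $T\square T'$ is bipartite and $\dist(\alpha,\beta)=D$ is odd, so these two distances have opposite parities, a contradiction. If $\alpha$ is the midpoint, the common difference equals $\dist(\alpha,\beta)=D$, so $\gamma$ is an antipode of $\alpha$, hence $\gamma\in Q$ has color $3$, not $1$; the case with $\beta$ as midpoint is symmetric. No rainbow $3$-AP survives, giving $\aw(T\square T',3)\ge 4$.

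I expect the main obstacle to be the structural lemma that non-$3$-peripherality yields the mutually antipodal decomposition $P_T,Q_T$ and that the product sets $P_T\times P_{T'}$, $Q_T\times Q_{T'}$ inherit this. Concretely, one must show that ``at most two branches reach the radius'' is equivalent to every deepest leaf having all of its antipodes confined to a single opposite family, and in particular handle trees whose deep branches fork into several deepest leaves — precisely the configurations where a naive ``two special vertices'' coloring produces a rainbow (and, consistently, the $3$-peripheral case where a third deep branch breaks the decomposition corresponds to $\aw=3$). Once this antipodal bookkeeping is established, the bipartite odd-diameter parity argument makes the rainbow-free verification routine. I would also confirm the degenerate small cases — one factor equal to $P_2$, or very small radius — still leave a color-$1$ vertex, so that the coloring is genuinely surjective.
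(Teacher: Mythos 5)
Your proof is correct, but note that the paper does not actually prove this statement --- it is imported from \cite{BMSWW} --- so the fair comparison is with the closely analogous machinery the paper does contain for the even-diameter companion case (Lemma \ref{lem:even-diam-rb-free-coloring} and Propositions \ref{prop:strongly/odd}--\ref{prop:strongly/even}). Against that template, your construction is the same coloring in different clothing: your classes $P = P_T\times P_{T'}$ and $Q = Q_T \times Q_{T'}$ are exactly the antipode sets of a fixed diametral pair of $T\square T'$, i.e.\ the $red$/$blue$ classes of the paper's coloring, with the simplification that since $\diam(T\square T')$ is odd both classes may sit at distance exactly $\diam(T\square T')$ from each other, so no $T_{v^-}$ or $T_{u^+}$ surgery is needed. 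What genuinely differs is how you obtain the key structural fact that the peripheral vertices of a non-$3$-peripheral tree split into two mutually antipodal families: the paper's route is local, via Lemma \ref{lem:3.10} together with Lemma \ref{general tree peripheral lem}, whereas you argue globally from Jordan's theorem --- even diameter forces a one-vertex center $c$, non-$3$-peripherality caps the number of depth-$\rad(T)$ branches at $c$ at two, and for the odd-diameter factor the central edge performs the split with non-$3$-peripherality automatic (your median-parity argument correctly recovers Lemma \ref{lem:3-peripheral even diam}). I checked your verification steps and they hold: exactness is safe because $c$ itself supplies a color-$1$ vertex even when the other factor is $P_2$; every $P$--$Q$ distance equals the odd number $\diam(T\square T')$ by Proposition \ref{proposition:dist}; bipartite parity rules out a color-$1$ midpoint; and the antipode-confinement computation rules out color-$2$ and color-$3$ midpoints, since the common difference is then forced to be the full diameter. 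Your branch decomposition buys a cleaner global picture and makes the ``exactly two deep branches'' dichotomy transparent; the Lemma \ref{lem:3.10} route is less visual but more robust --- it is the version that survives the passage to $T_{v^-}$ and $T_{u^+}$ in the even-diameter case of this paper, where a center-based decomposition would have to be recomputed after each modification.
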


 Corollary \ref{d(u,v)+1} says that the eccentricity of any vertex in a tree is realized by a peripheral vertex. However, it is stated in a way that is easier to use in practice.

\begin{cor}[\cite{BMSWW}]\label{d(u,v)+1}
If $u$ is not a peripheral vertex of $T$ and $v \in V(T)$, then there exists a vertex $w \in V(T)$ such that $\dist(w,v) = \dist(u,v) + 1$. 
\end{cor}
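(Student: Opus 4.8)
The plan is to recast the statement as a purely metric fact and then close it with the additive (four-point) condition for tree metrics. First I would observe that the conclusion is equivalent to showing $\dist(u,v) < \epsilon(v)$. Indeed, since $T$ is connected, the distances from $v$ realize every integer in $\{0,1,\dots,\epsilon(v)\}$: walking along a geodesic from $v$ to any vertex $z$ with $\dist(v,z) = \epsilon(v)$ passes through vertices at each intermediate distance. Hence a vertex $w$ with $\dist(w,v) = \dist(u,v)+1$ exists precisely when $\dist(u,v)+1 \le \epsilon(v)$, i.e. when $\dist(u,v) < \epsilon(v)$. So it suffices to prove that a non-peripheral $u$ is never a vertex farthest from $v$.

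I would prove this by contrapositive: assuming $\dist(u,v) = \epsilon(v)$ (so $\dist(x,v) \le \dist(u,v)$ for every vertex $x$), I will deduce that $u$ is peripheral. Fix a diametral pair $a,b$ with $\dist(a,b) = \diam(T) =: D$. Then I would apply the four-point condition for tree metrics — namely, that among the three pairwise-sum pairings of four vertices the largest two coincide, so in particular $\dist(x,y)+\dist(z,t) \le \max\{\dist(x,z)+\dist(y,t),\ \dist(x,t)+\dist(y,z)\}$ (a standard consequence of tree structure, cf. \cite{West}) — to the quadruple $a,b,u,v$. This yields
\[
D + \dist(u,v) \le \max\{\dist(a,u)+\dist(b,v),\ \dist(a,v)+\dist(b,u)\}.
\]

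Next I would split on which term attains the maximum. If it is $\dist(a,u)+\dist(b,v)$, then combining $D+\dist(u,v) \le \dist(a,u)+\dist(b,v)$ with $\dist(b,v) \le \dist(u,v)$ gives $D \le \dist(a,u)$, and since $\dist(a,u) \le \diam(T) = D$ we conclude $\dist(a,u) = D$. The other case is symmetric: using $\dist(a,v) \le \dist(u,v)$ forces $\dist(b,u) = D$. In either case $\epsilon(u) = D$, so $u$ is peripheral, contradicting the hypothesis. Therefore $\dist(u,v) < \epsilon(v)$, and the vertex $w$ at distance $\dist(u,v)+1$ from $v$ produced in the first step is exactly the one required.

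The main obstacle is this middle step — showing that a farthest vertex is peripheral — because the naive simplifications are false (for instance, a vertex $u$ farthest from $v$ need not dominate $v$'s distances to every other vertex, as small paths already show). The four-point condition is precisely the tool that converts ``farthest from $v$'' into ``at distance $D$ from $a$ or from $b$''. If one instead wants a self-contained argument, the same conclusion can be reached by projecting both $u$ and $v$ onto the geodesic $[a,b]$ and comparing the heights of the hanging subtrees, but that route requires noticeably more bookkeeping, which is why I would favor the four-point condition.
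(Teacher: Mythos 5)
Your proof is correct, and it takes a genuinely different route from the source. The paper does not reprove this statement: it quotes it from \cite{BMSWW} and frames it as the fact that the eccentricity of any vertex of a tree is realized by a peripheral vertex, and the surrounding toolkit (Observation \ref{obs:diam-paths-int-center}, Lemma \ref{general tree peripheral lem}) establishes such facts by explicitly tracing diametral paths and analyzing where they intersect. You instead reduce the statement to the equivalence ``$w$ exists iff $\dist(u,v) < \epsilon(v)$'' (valid, since distances from $v$ along a geodesic to an eccentric vertex realize every value in $\{0,\dots,\epsilon(v)\}$) and then close the gap with the four-point condition applied to $a,b,u,v$; the two-case analysis correctly forces $\dist(a,u) = \diam(T)$ or $\dist(b,u) = \diam(T)$, so any vertex farthest from $v$ is peripheral. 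This is in fact slightly stronger than what the paper's framing asserts (you show \emph{every} farthest vertex is peripheral, pinned at distance $\diam(T)$ from one end of a fixed diametral pair --- essentially a metric analogue of Lemma \ref{general tree peripheral lem}), and it buys cleanliness: no bookkeeping about intersection vertices of paths, and the argument is really a statement about $0$-hyperbolic metrics rather than about trees as graphs. What the paper's path-based style buys in exchange is self-containedness: your argument imports the four-point inequality as a black box, and West's text does not state it in that form, so in a polished write-up you should either cite a source that does or include the short induction/path-splitting proof of the inequality (take the median of $x,y,z$ on the geodesic $[x,y]$ and compare); with that one addition your proof stands on its own.
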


\begin{thm}[Jordan 1869]\label{thm:Jordan} The center of a tree consists of one vertex or two vertices. 
\end{thm}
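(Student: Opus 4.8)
The plan is to prove Jordan's theorem by induction on $|T|$ using the standard leaf-pruning operation, which shrinks a tree while preserving its center. The base cases are immediate: if $|T|=1$ the single vertex is central, and if $|T|=2$ both vertices have eccentricity $1$ and hence form a two-vertex center.

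For the inductive step I would assume $|T|\ge 3$ and let $T'$ be the subgraph obtained by deleting every leaf of $T$ simultaneously. Since the induced subgraph on the non-leaves of a tree is connected and acyclic, $T'$ is again a tree; moreover every tree on at least two vertices has at least two leaves while every tree on at least three vertices has at least one non-leaf, so $1\le |T'|<|T|$ and the inductive hypothesis applies to $T'$. Two observations drive the argument. First, because a leaf cannot be an interior vertex of any path, the unique $T$-path between two non-leaves avoids all leaves, so $\dist_{T'}(u,v)=\dist_T(u,v)$ for all $u,v\in V(T')$. Second, in a tree a vertex farthest from a given vertex $v$ is always a leaf, so $\epsilon_T(v)$ is realized by a leaf (this is essentially the content underlying Corollary \ref{d(u,v)+1}).

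The key step is to show $\epsilon_{T'}(v)=\epsilon_T(v)-1$ for every $v\in V(T')$. For the lower bound, take a leaf $\ell$ with $\dist_T(v,\ell)=\epsilon_T(v)$; its neighbor on the path from $v$ to $\ell$ is a non-leaf at distance $\epsilon_T(v)-1$ that survives in $T'$, so $\epsilon_{T'}(v)\ge \epsilon_T(v)-1$. For the upper bound, any $w\in V(T')$ with $\dist_T(v,w)=\epsilon_T(v)$ would be a farthest vertex from $v$ and hence a leaf, contradicting $w\in V(T')$; thus $\dist_{T'}(v,w)\le \epsilon_T(v)-1$ for all $w\in V(T')$. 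I would then verify that no leaf is central: if $\ell$ is a leaf with neighbor $u$, then $\dist_T(\ell,w)=\dist_T(u,w)+1$ for all $w\neq \ell$, and since $|T|\ge 3$ forces $\deg(u)\ge 2$ this yields $\epsilon_T(\ell)=\epsilon_T(u)+1>\epsilon_T(u)$, so $\ell$ cannot minimize eccentricity. Hence the center of $T$ is contained in $V(T')$, and because every surviving eccentricity drops by the same constant $1$, the vertices minimizing $\epsilon_T$ coincide with those minimizing $\epsilon_{T'}$; that is, the center of $T$ equals the center of $T'$. By the inductive hypothesis this set has one or two vertices, completing the induction.

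The main obstacle is the careful bookkeeping in the key step: one must confirm that leaf-deletion reduces every surviving vertex's eccentricity by \emph{exactly} $1$ (not merely at most $1$), which hinges on the fact that extremal vertices are precisely leaves, and separately that central vertices are never leaves, so that pruning does not accidentally destroy the center. The small-tree edge cases (stars and $P_3$) should be checked to ensure $\epsilon_T(u)$ is realized away from $\ell$ in the computation $\epsilon_T(\ell)=\epsilon_T(u)+1$, and that $T'$ is nonempty so the inductive hypothesis genuinely applies.
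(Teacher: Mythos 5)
Your proof is correct, but there is nothing to compare it against in the paper: Theorem \ref{thm:Jordan} is stated without proof, as a citation of Jordan's classical 1869 result, and is only ever used as a black box (e.g., in the proof of Observation \ref{obs:diam-paths-int-center}). What you supply is the standard textbook induction via simultaneous leaf deletion, and you correctly address the points where such an argument can go wrong: $T'$ is a nonempty tree with $\dist_{T'}=\dist_T$ on $V(T')$ because interior vertices of tree paths have degree at least $2$; every vertex realizing $\epsilon_T(v)$ is a leaf, which gives the exact drop $\epsilon_{T'}(v)=\epsilon_T(v)-1$ rather than merely an inequality; and no leaf is central when $|T|\ge 3$, so the center survives pruning and equals the center of $T'$, to which the inductive hypothesis applies since $1\le |T'|<|T|$. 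The degenerate situations you flag are indeed fine: when $\epsilon_T(v)=1$ (so $T$ is a star centered at $v$), the ``neighbor of $\ell$ on the $v$--$\ell$ path'' is $v$ itself and the lower bound holds trivially. One small remark on your parenthetical: Corollary \ref{d(u,v)+1} is phrased for non-peripheral vertices, so to literally derive ``farthest vertices are leaves'' from it you would also need the observation that every non-leaf of a tree is non-peripheral --- but since that is the same one-line path-extension argument you already make, proving the fact directly, as you do, is the cleaner choice.
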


In general, the center of a graph need not be connected, and in fact, components of the center can be arbitrarily far apart.  Consider, for example, a $C_6$ with a leaf on every other vertex.  For trees, this is not the case.  We include the following useful observation and its proof for completeness.

\begin{obs}\label{obs:diam-paths-int-center}
    All diameter paths in a tree $T$ intersect every central vertex of $T$. In particular, all diameter paths in a tree $T$ intersect.
\end{obs}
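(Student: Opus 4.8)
The plan is to prove the stronger, cleaner statement that every central vertex lies on \emph{every} diameter path; the ``in particular'' clause then follows immediately, since by Theorem~\ref{thm:Jordan} the center is nonempty, so a single common vertex witnesses that all diameter paths intersect. I would argue by contradiction: fix a central vertex $c$ (so $\epsilon(c) = \rad(T)$) and a diameter path $P$ with endpoints $x,y$, i.e.\ $\dist(x,y) = \diam(T)$, and suppose $c \notin V(P)$.

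Because $T$ is a tree, paths are unique, so the paths from $c$ to the connected subtree $P$ all first meet $P$ at a single vertex $z$; consequently $\dist(c,w) = \dist(c,z) + \dist(z,w)$ for every $w \in V(P)$, and $\dist(c,z) \ge 1$ since $c \notin V(P)$ while $z \in V(P)$. Applying this to $w = x$ and $w = y$ and using that $\dist(c,x),\dist(c,y) \le \epsilon(c) = \rad(T)$ gives $\dist(c,z) + \dist(z,x) \le \rad(T)$ and $\dist(c,z) + \dist(z,y) \le \rad(T)$. Summing these two inequalities and using $\dist(z,x) + \dist(z,y) = \dist(x,y) = \diam(T)$ yields $2\dist(c,z) + \diam(T) \le 2\rad(T)$, hence $\diam(T) \le 2\rad(T) - 2$. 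This will contradict the tree inequality $\diam(T) \ge 2\rad(T) - 1$, forcing $c \in V(P)$ and completing the argument.

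The main obstacle, and the only nonelementary ingredient, is establishing $\diam(T) \ge 2\rad(T) - 1$ for trees. I would prove it by a branch-height computation: root $T$ at a central vertex $c$, and for each neighbor $n_i$ let $h_i$ be the maximum distance from $c$ into the component of $T - c$ containing $n_i$, ordered so that $h_1 \ge h_2 \ge \cdots$. Then $\epsilon(c) = h_1 = \rad(T)$, and evaluating the eccentricity of the neighbor $n_1$ in a deepest branch gives $\epsilon(n_1) = \max(h_1 - 1,\, 1 + h_2)$; since $c$ is central, $\epsilon(n_1) \ge \rad(T) = h_1$, which forces $h_2 \ge h_1 - 1$. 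A diameter path running from a deepest leaf of branch $1$ through $c$ to a deepest leaf of branch $2$ then has length at least $h_1 + h_2 \ge 2\rad(T) - 1$, as required. The only cases needing separate mention are when $c$ has degree one (so no $h_2$ exists): centrality of a leaf forces $|T| \le 2$, and both $P_1$ and $P_2$ satisfy $\diam(T) = 2\rad(T) - 1$ directly. With this inequality in hand, the contradiction above closes and the observation follows.
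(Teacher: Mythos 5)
Your proof is correct, but it takes a genuinely different route from the paper's. The paper argues from the path's side: it takes a vertex $w$ in the center of an arbitrary diameter path $P$ with endpoints $u,v$, shows $w$ must be central in $T$ (otherwise some $x$ with $d(w,x) > \frac{\diam(T)+1}{2}$, combined with $d(u,w), d(v,w) \geq \frac{\diam(T)-1}{2}$ and the tree property that $d(u,x)$ or $d(v,x)$ decomposes through $w$, forces a distance exceeding the diameter), and then uses Theorem~\ref{thm:Jordan} to match cardinalities and conclude $C_P = C_T$ --- a slightly stronger structural fact, namely that the central vertices of $T$ are exactly the midpoints of every diameter path. You instead argue from the central vertex's side: you project $c$ onto $P$ at the nearest vertex $z$, sum the eccentricity bounds $d(c,z)+d(z,x) \leq \rad(T)$ and $d(c,z)+d(z,y) \leq \rad(T)$, and derive $\diam(T) \leq 2\rad(T)-2$ whenever $c \notin V(P)$, contradicting $\diam(T) \geq 2\rad(T)-1$. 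Notably, the paper's proof silently assumes essentially this same fact --- it asserts $\rad(T) \in \left\{\frac{\diam(T)}{2}, \frac{\diam(T)+1}{2}\right\}$ without proof --- whereas your branch-height computation proves the half of it you need, so your argument is more self-contained, at the price of extra length and of yielding only the containment claim rather than the identification of the center with the path midpoints. One trivial slip: for $P_1$ you claim equality $\diam(T) = 2\rad(T)-1$, but there $\diam = 0 > -1 = 2\rad - 1$; the inequality, which is all your argument uses, still holds.
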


\begin{proof}
    Suppose we have a diameter path $P$ with peripheral vertices $u$ and $v$. Define $C_T$ to be the center of $T$ and $C_P$ to be the center of $P$ as a subgraph of $T$. Let $w \in C_P$ and assume $w \notin C_T$. Since $\rad(T) \in \left\{\frac{\diam(T)}{2},\frac{\diam(T)+1}{2}\right\}$, there exists some $x \in V(T)$ such that $d(w,x) >\frac{\diam(T)+1}{2}$. But now, either $d(u,x) = d(u,w) + d(w,x)$ or $d(v,x) = d(v,w) + d(w,x)$. Since $d(v,w),d(u,w) \geq \frac{\diam(T)-1}{2}$, either case gives a distance larger than the diameter, a contradiction.

    Thus, $C_P \subseteq C_T$. Theorem \ref{thm:Jordan} implies the center of our tree is one or two vertices depending on the parity of the diameter. This means $|C_P| = |C_T|$ in either case. So, $C_P = C_T$, our desired result.
\end{proof}

The structure given by Observation \ref{obs:diam-paths-int-center} allows us to give the proof of the following straightforward result.

\begin{lem}\label{general tree peripheral lem}
    Suppose $T$ is a tree with $u,v \in V(T)$ which realize the diameter. If $x$ is a peripheral vertex of $T$, then $x$ is diameter away from $u$ or diameter away from $v$.
\end{lem}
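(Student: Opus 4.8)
The plan is to route every diameter path through the center via Observation \ref{obs:diam-paths-int-center}, and then split on the parity of $\diam(T)$ using Theorem \ref{thm:Jordan}. Write $D=\diam(T)$ and let $C$ be the center of $T$. Since $u,v$ realize the diameter and $x$ is peripheral, there is a vertex $y$ with $\dist(x,y)=D$; both the $u$--$v$ path and the $x$--$y$ path are diameter paths, so by Observation \ref{obs:diam-paths-int-center} both pass through every vertex of $C$, and by Theorem \ref{thm:Jordan} we have $C=\{m\}$ when $D$ is even and $C=\{m_1,m_2\}$ with $m_1m_2\in E(T)$ when $D$ is odd.

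The key sub-step is a \emph{midpoint lemma}: each diameter path has the center as its midpoint. In the even case $\epsilon(m)=\rad(T)=D/2$; from $\dist(u,m)+\dist(m,v)=D$ with each summand at most $\epsilon(m)=D/2$ we get $\dist(u,m)=\dist(m,v)=D/2$, and the same argument on the $x$--$y$ path gives $\dist(x,m)=D/2$. In the odd case $\epsilon(m_1)=\epsilon(m_2)=\rad(T)=(D+1)/2$; writing $D=2k+1$, the inequality $\dist(u,m_2)=\dist(u,m_1)+1\le\epsilon(m_2)=k+1$ forces $\dist(u,m_1)\le k$, and symmetrically $\dist(m_2,v)\le k$, so the identity $\dist(u,m_1)+\dist(m_2,v)=2k$ forces both to equal $k$; the identical argument gives $\dist(x,m_i)=k$ for the central vertex $m_i$ nearer to $x$.

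I would then finish by a pigeonhole on branches/sides. In the even case, let $u',v',x'$ be the neighbors of $m$ on the paths toward $u,v,x$; since $m$ is an interior vertex of each diameter path we have $u'\ne v'$, and as $x'$ can coincide with at most one of them, $x'\ne u'$ or $x'\ne v'$. Two paths leaving $m$ through distinct edges never rejoin in a tree, so whichever inequality holds yields $\dist(x,u)=\dist(x,m)+\dist(m,u)=D$ or $\dist(x,v)=D$. In the odd case, deleting $m_1m_2$ splits $T$ into $T_1\ni m_1$ and $T_2\ni m_2$; every diameter path crosses this edge, so $u,v$ lie on opposite sides, say $u\in T_1$ and $v\in T_2$. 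If $x\in T_1$ then $\dist(x,v)=\dist(x,m_1)+1+\dist(m_2,v)=k+1+k=D$, and if $x\in T_2$ then symmetrically $\dist(x,u)=D$.

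The main obstacle is the midpoint sub-step, i.e. pinning the distances from $x$ (and from $u,v$) to the central vertex or vertices exactly; in the odd case this crucially uses the adjacency of $m_1,m_2$ together with $\epsilon(m_1)=\epsilon(m_2)=\rad(T)$ to rule out an off-center split of the path. Once those distances are fixed, the branch/side pigeonhole is immediate. I would also note a parity-free shortcut via the four-point condition for tree metrics: applied to $u,v,x,y$, the sum $\dist(u,v)+\dist(x,y)=2D$ is maximal among the three pairings, so one of $\dist(u,x)+\dist(v,y)$ or $\dist(u,y)+\dist(v,x)$ also equals $2D$, and since each term is at most $D$ this forces $\dist(x,u)=D$ or $\dist(x,v)=D$ directly.
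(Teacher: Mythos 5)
Your proof is correct, but it takes a genuinely different route from the paper's. The paper uses only the weakest consequence of Observation \ref{obs:diam-paths-int-center} --- that the $u$--$v$ path $P$ and an $x$--$y$ diameter path $P'$ intersect --- and then runs a symmetric exchange argument: taking $s,t \in V(P)\cap V(P')$ closest to $x$ and $y$, it shows $\dist(x,s)<\dist(u,s)$ forces $\dist(u,y)>\diam(T)$ and $\dist(u,s)<\dist(x,s)$ forces $\dist(x,v)>\diam(T)$, so $\dist(x,s)=\dist(u,s)$ and hence $\dist(x,v)=\diam(T)$, with no parity split and no appeal to where the center sits. You instead extract the full midpoint structure (every diameter path is bisected by the center, with distances pinned at $\rad(T)$ or $k=(\diam(T)-1)/2$ per parity) and finish with a branch/side pigeonhole at the center; this is more work but more informative, and your midpoint computation is essentially the structural fact underlying Lemma \ref{lem:3-peripheral even diam}. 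One small caveat: in the odd case you use $m_1m_2\in E(T)$, which Theorem \ref{thm:Jordan} as stated does not assert; it does follow from the proof of Observation \ref{obs:diam-paths-int-center} (since $C_P=C_T$ and the center of an odd-length path is its two adjacent middle vertices), but you should say so. Your four-point-condition shortcut is the slickest of all: since $\dist(u,v)+\dist(x,y)=2\diam(T)$ dominates the other two pairings and the maximum pairing sum is achieved twice in a tree metric, one of $\dist(u,x)+\dist(v,y)$ or $\dist(u,y)+\dist(v,x)$ also equals $2\diam(T)$, forcing each of its terms to equal $\diam(T)$ --- this avoids all case analysis and in fact simultaneously yields the pairing conclusion of Lemma \ref{lem:3.10}, though it imports the $0$-hyperbolicity of tree metrics, which the paper never establishes and would have to cite or prove.
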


\begin{proof} 
    If $x$ is diameter away from $u$ or $v$ we are done so assume $d(x,u)\neq \diam(T)$ and $d(x,v)\neq \diam(T)$.  Since $x$ is peripheral, there exists some $y\notin \{u,v\}$ such that $d(x,y) = \diam(T)$. Define $P$ and $P'$ to be the $u-v$ and $x-y$ paths in $T$, respectively.  By Observation \ref{obs:diam-paths-int-center} we know that $P$ and $P'$ intersect so define $s$ and $t$ to be the vertices in $V(P) \cap V(P')$ closest to $x$ and $y$, respectively. 

    Without loss of generality, suppose $d(u,s) \leq d(u,t)$ (see Figure \ref{fig:lem2.15}). If $d(x,s) < d(u,s)$, then a contradiction follows from
    \[
    \begin{split}
        \diam(T) & = d(x,y) \\
        & = d(x,s) + d(s,t) + d(t,y) \\
        & < d(u,s) + d(s,t) + d(t,y) \\
        & = d(u,y).
    \end{split}
    \]
    Similarly, if $d(u,s) < d(x,s)$ we get the contradction that $\diam(T) < d(x,v)$.
    Thus, $d(u,s) = d(x,s)$ which implies that $d(x,v) = \diam(T)$.\end{proof}

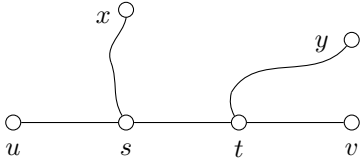
\begin{figure}
    \centering
    \begin{tikzpicture}
        \node[draw,circle,inner sep = 2] (11) at (0,4) {};
        \node[draw,circle,inner sep = 2] (ii) at (1.5,4) {};
        \node[draw,circle,inner sep = 2] (jj) at (3,4) {};
        \node[draw,circle,inner sep = 2] (kk) at (4.5,4) {};
        \node[draw,circle,inner sep = 2] (vv) at (4.5,5.1) {};
        
        \draw (11) -- (ii);
        \draw (ii) -- (jj);
        \draw (jj) -- (kk);
        \draw (ii) to[in = -70, out = 120] (1.3,4.8);
        \draw (1.3,4.8) to[in = -80, out = 110] (1.5,5.5);
        \draw[bend left = 20] (jj) to (2.9,4.4);
        \draw (2.9,4.4) to[in = -130, out = 60] (vv);

        \node[draw,circle,inner sep = 2,fill=white] (uu) at (1.5,5.5) {};
        \node at (0,3.65) {$u$};
        \node at (1.5,3.65) {$s$};
        \node at (3,3.65) {$t$};
        \node at (4.5,3.65) {$v$};
        \node at (1.2,5.4) {$x$};
        \node at (4.1,5.05) {$y$};
    \end{tikzpicture}
    \caption{Relationship between peripheral vertices $u,x,y$ and $v$  when $d(u,s) \le d(u,t)$, as in Lemma \ref{general tree peripheral lem}.}
    \label{fig:lem2.15}
\end{figure}

\begin{lem}[\cite{BMSWW}]\label{lem:3.10}
    Suppose $T$ is a non-$3$-peripheral tree with and $u_i,u_j\in V(T)$ realize the diameter of $T$. If there exist $u_x,u_y \in V(T)$ such that $\dist(u_x,u_j) = \diam(T)$ and $\dist(u_i,u_y) = \diam(T)$, then $\dist(u_x,u_y) = \diam(T)$. 
\end{lem}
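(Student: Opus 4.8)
The plan is to recognize this as a short consequence of Lemma \ref{general tree peripheral lem} together with the non-$3$-peripheral hypothesis, so that almost no computation is needed. Write $D = \diam(T)$ throughout. First I would record the easy observation that $u_x$ is a peripheral vertex: since $\dist(u_x,u_j) = D$ its eccentricity is at least $D$, and an eccentricity never exceeds the diameter, so $\epsilon(u_x) = D$. (The analogous fact for $u_y$ turns out not to be needed for the direct argument.)

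Next I would dispose of the degenerate case $u_x = u_i$. Here $\dist(u_x,u_y) = \dist(u_i,u_y) = D$ by hypothesis, so the conclusion is immediate. Thus assume $u_x \neq u_i$ for the remainder. Then $u_i$, $u_j$, $u_x$ are pairwise distinct: $u_i \neq u_j$ and $u_x \neq u_j$ because $\dist(u_i,u_j) = \dist(u_x,u_j) = D \geq 1$, and $u_i \neq u_x$ by assumption. I then claim $\dist(u_x,u_i) \neq D$. Indeed, if $\dist(u_x,u_i) = D$, then the three distinct vertices $u_i, u_j, u_x$ would be pairwise at distance $D = \diam(T)$, making $T$ a $3$-peripheral tree and contradicting the hypothesis.

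Finally I would apply Lemma \ref{general tree peripheral lem} to the diameter-realizing pair $u_i, u_y$ (a legitimate application since $\dist(u_i,u_y) = D$) and the peripheral vertex $u_x$. The lemma forces $u_x$ to be diameter away from $u_i$ or from $u_y$. The first alternative was just ruled out, so $\dist(u_x,u_y) = D$, which is exactly the desired conclusion.

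The argument has no genuine obstacle; the two points requiring care are (i) verifying that $u_x$ is peripheral so that Lemma \ref{general tree peripheral lem} may be invoked, and (ii) separating out the degenerate coincidence $u_x = u_i$ so that the three vertices $u_i, u_j, u_x$ are genuinely distinct when we extract the $3$-peripheral contradiction. If one wished to avoid the case split, one could instead symmetrize: $u_y$ is likewise peripheral and, by the same non-$3$-peripheral reasoning applied to $u_i, u_j, u_y$, not diameter from $u_j$, so Lemma \ref{general tree peripheral lem} applied to the pair $u_j, u_x$ and vertex $u_y$ again yields $\dist(u_x,u_y) = D$. The direct application above is the cleanest.
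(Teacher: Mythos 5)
Your proof is correct. One point of comparison is unusual here: this paper does not actually prove Lemma \ref{lem:3.10} --- it imports it from \cite{BMSWW} and only proves the supporting Lemma \ref{general tree peripheral lem}. What you have done is give a self-contained derivation of the imported lemma from the in-paper Lemma \ref{general tree peripheral lem}, and the derivation is sound: $u_x$ is indeed peripheral (eccentricity at least $\diam(T)$ from $\dist(u_x,u_j)=\diam(T)$, and never more), the pair $u_i,u_y$ legitimately realizes the diameter so Lemma \ref{general tree peripheral lem} applies, and the alternative $\dist(u_x,u_i)=\diam(T)$ is correctly excluded via the non-$3$-peripherality of $T$ once you have separated out $u_x=u_i$. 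Your case split also quietly covers the remark the paper makes after the lemma, that the four vertices need not be distinct: the coincidence $u_x=u_i$ is handled explicitly, the trivial tree ($\diam(T)=0$) falls into that same case, and $u_x=u_y$ cannot occur under the hypotheses when $\diam(T)\geq 1$ (it would force the contradiction you derive, or a distance-zero ``diameter''). One small quibble with your closing aside: the symmetrized variant does not really avoid a case split --- the coincidence $u_y=u_j$ plays the same role there, though it is harmless since then $\dist(u_y,u_j)=0\neq\diam(T)$ and the exclusion step goes through trivially. Net effect: your argument is arguably a service to the reader, since it replaces an external citation with a three-line consequence of a lemma the paper already proves.
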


Note that the four vertices in Lemma \ref{lem:3.10} need not be distinct to apply the result, a fact that is used regularly in this paper. 

\begin{lem}[\cite{BMSWW}]\label{lem:3-peripheral even diam}
    If $T$ is $3$-peripheral, then $\diam(T)$ is even.  Further, for any three vertices that are pairwise distance $\diam(T)$ apart, there is some vertex that is equidistant from all three of them.
\end{lem}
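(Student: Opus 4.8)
The plan is to pick a central vertex of $T$ and use Observation \ref{obs:diam-paths-int-center} to force it onto all three pairwise geodesics at once; the three resulting distance equations then settle both claims simultaneously. Suppose $T$ is $3$-peripheral, and let $a,b,c\in V(T)$ be pairwise at distance $D:=\diam(T)$. Since each pair realizes the diameter, each of the geodesics $P_{ab}$, $P_{bc}$, $P_{ac}$ is a diameter path. The center of $T$ is nonempty, so fix a central vertex $m$; by Observation \ref{obs:diam-paths-int-center} every diameter path meets every central vertex, so $m$ lies on all three of these geodesics.

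Next I would read off the additive distance decompositions. Because $m$ lies on $P_{ab}$ we have $d(a,b)=d(a,m)+d(m,b)$, and likewise for the other two pairs. Writing $\alpha=d(a,m)$, $\beta=d(b,m)$, $\gamma=d(c,m)$, this gives
\[
\alpha+\beta=D,\qquad \beta+\gamma=D,\qquad \alpha+\gamma=D.
\]
Subtracting these in pairs forces $\alpha=\beta=\gamma$, hence $2\alpha=D$ and $\alpha=\beta=\gamma=D/2$. Since $\alpha$ is a graph distance, it is a nonnegative integer, so $D$ must be even, which is the first claim. At the same time $m$ is at distance $D/2$ from each of $a$, $b$, and $c$, so $m$ is exactly the vertex equidistant from all three required by the second claim.

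The crux is simply that a single vertex can be placed on all three diameter paths simultaneously, which is precisely the content of Observation \ref{obs:diam-paths-int-center} (with Theorem \ref{thm:Jordan} guaranteeing the center exists and is small); once that is in hand the argument reduces to the short linear system above, and it is worth emphasizing that the same vertex $m$ disposes of both assertions at once. I expect no substantive obstacle here beyond invoking the Observation correctly. The one point to verify carefully is that the additive decomposition $d(a,b)=d(a,m)+d(m,b)$ holds precisely because $m$ lies on the $a$–$b$ geodesic; if one preferred to avoid the central-vertex language, the same $m$ could instead be produced directly as the median of $a,b,c$ (the unique common vertex of the three geodesics), and the identical computation would apply.
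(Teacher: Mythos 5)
Your proof is correct; note that the paper itself does not prove this lemma---it is stated with a citation to \cite{BMSWW}---so there is no in-paper argument to compare against, and your proposal in effect supplies the missing proof using the paper's own toolkit. The logic checks out: each of the three pairwise geodesics has length $\diam(T)$ and hence is a diameter path; Observation \ref{obs:diam-paths-int-center} (whose proof precedes this lemma and does not depend on it, so there is no circularity) places any central vertex $m$ on all three; and since in a tree a vertex on the $a$--$b$ geodesic satisfies $d(a,b)=d(a,m)+d(m,b)$, your linear system $\alpha+\beta=\beta+\gamma=\alpha+\gamma=D$ forces $\alpha=\beta=\gamma=D/2$, which yields the parity claim and the equidistant vertex simultaneously. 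Two points are worth making explicit if you write this up: choosing $m$ does not tacitly assume $D$ is even, since by Theorem \ref{thm:Jordan} the center is nonempty whether it has one or two vertices and the Observation applies in either case (that the center turns out to be a single vertex is a consequence, not a hypothesis); and your alternative via the median of $a,b,c$ is equally sound, as in a tree the three pairwise geodesics of any triple share a unique common vertex, and the identical computation goes through with that vertex in place of $m$.
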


In Section $5$ of \cite{BMSWW}, it was found that when classifying $\aw(T\square T',3)$ when $\diam(T\square T')$ is even, the partition of trees into $3$-peripheral and non-$3$-peripheral was insufficient. 
To refine the partition further some new definitions are needed.

    If $T$ is a tree with peripheral vertex $v$, we define $T_{v^-}$ to be the tree obtained from $T$ by removing all vertices which realize the diameter of $T$ with $v$. 
    If $u$ is any vertex of $T$, we define $T_{u^+}$ to be the tree $T$ with an additional leaf adjacent to $u$.

    One motivation for the $T_{v^-}$ definition is that when $v$ is peripheral we change the parity of the diameter of $T$.  
    In particular, we want to use Lemma \ref{lem:3.10} with leaves that are $\diam(T) - 1$ away from each other. By moving to the subgraph $T_{v^-}$, Observation \ref{obs:diamTtilde} allows us to apply the desired lemma.

\begin{obs}\label{obs:diamTtilde}
    For any peripheral vertex $v$ in a tree $T$, we have \[\diam\left(T_{v^-}\right) = \diam(T)-1.\]
\end{obs}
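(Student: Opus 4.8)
The plan is to establish the two inequalities $\diam(T_{v^-}) \ge \diam(T) - 1$ and $\diam(T_{v^-}) \le \diam(T) - 1$ separately. The first thing to record is a structural fact about the deleted vertices: every vertex $u$ removed in forming $T_{v^-}$ satisfies $\dist(u,v) = \diam(T)$, and any such $u$ must be a leaf of $T$. Indeed, if $u$ had a neighbor $w$ off the unique $v$--$u$ path, then $\dist(v,w) = \dist(v,u) + 1 = \diam(T) + 1$, which is impossible. Hence $T_{v^-}$ is obtained from $T$ by deleting a set of leaves, so it is again a (nonempty) tree, and the distances between surviving vertices coincide with their distances in $T$. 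In particular $\diam(T_{v^-}) \le \diam(T)$, and $v$ itself survives since $\diam(T) \ge 1$.

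For the lower bound I would use that $v$ is peripheral to fix a vertex $u$ with $\dist(v,u) = \diam(T)$, and let $u'$ be the neighbor of $u$ lying on the $v$--$u$ path. Then $\dist(v,u') = \diam(T) - 1 \neq \diam(T)$, so $u'$ is not deleted; since distances are preserved, $\dist_{T_{v^-}}(v,u') = \diam(T) - 1$, and therefore $\diam(T_{v^-}) \ge \diam(T) - 1$.

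The heart of the argument, and the only step where I expect any subtlety, is the upper bound $\diam(T_{v^-}) \le \diam(T) - 1$. Suppose toward a contradiction that some pair $a, b \in V(T_{v^-})$ satisfies $\dist(a,b) = \diam(T)$, so that $a$ and $b$ realize the diameter of $T$. Since $v$ is a peripheral vertex of $T$, Lemma \ref{general tree peripheral lem}, applied with the diameter-realizing pair $a, b$ and the peripheral vertex $v$, forces $\dist(v,a) = \diam(T)$ or $\dist(v,b) = \diam(T)$. Either alternative says that $a$ or $b$ was deleted when passing to $T_{v^-}$, contradicting $a, b \in V(T_{v^-})$. Hence no such pair exists, which gives the upper bound. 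Combining this with the lower bound yields $\diam(T_{v^-}) = \diam(T) - 1$, as desired.
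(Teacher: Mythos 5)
Your proof is correct and takes essentially the same approach as the paper: the lower bound via the surviving neighbor $u'$ of a vertex realizing the diameter with $v$ (the paper phrases this as $\epsilon_{T_{v^-}}(v) = \diam(T)-1$), and the upper bound by applying Lemma \ref{general tree peripheral lem} to show every diameter-realizing pair of $T$ loses an endpoint in $T_{v^-}$. The only difference is that you explicitly verify the deleted vertices are leaves, so that $T_{v^-}$ is a tree in which distances are preserved — a fact the paper uses silently.
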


\begin{proof}
    Since $\epsilon_{T_{v^-}}(v) = \diam(T)-1$, we certainly have $\diam\left(T_{v^-}\right) \geq \diam(T)-1$. To show the other inequality, we will show that all diameter paths in $T$ lose a vertex in $T_{v^-}$. If $x$ and $y$ realize the diameter of $T$, Lemma \ref{general tree peripheral lem} implies that either $x$ or $y$ is diameter away from $v$ meaning that one of them will not appear in $T_{v^-}$. Thus, $T_{v^-}$ has no geodesics of length $\diam(T)$, our desired result.
\end{proof}

Since $T_{v^-}$ is an isometric subgraph of $T$ and $T$ is an isometric subgraph of $T_{v^+}$, we will often use the notation $d_T(x,y)$ interchangeably with $d_{T_{v^-}}(x,y)$ or $d_{T_{v^+}}(x,y)$ since these quantities are equal provided $x$ and $y$ are in each of the necessary trees.

\begin{defn}\label{defn:weak/strong non-3-per}
    Let $T$ be a nontrivial tree.
    \begin{enumerate}[(i)]
        \item Let $T$ be non-$3$-peripheral with odd diameter. We say $T$ is \emph{strongly non-$3$-peripheral} if there exists a peripheral vertex $v$ such that $T_{v^-}$ is non-$3$-peripheral. Otherwise, we say $T$ is \emph{weakly non-$3$-peripheral}. That is, for all peripheral vertices $v$ of $T$, $T_{v^-}$ is $3$-peripheral.

        \item Let $T$ be  non-$3$-peripheral with even diameter. We say $T$ is \emph{strongly non-$3$-peripheral} if for all $v \in V(T)$, $T_{v^+}$ is non-$3$-peripheral. Otherwise, we say it is \emph{weakly non-$3$-peripheral}. That is, there exists some $v \in V(T)$ such that $T_{v^+}$ is $3$-peripheral.

    \end{enumerate}
\end{defn}

\begin{ex} \rm{ Recall from Figure \ref{fig:cex} that the graph $H$ is weakly non-$3$-peripheral with odd diameter. Using the same vertex labeling as the figure, this can be seen because $T_{v_1^-}$ is $3$-peripheral. However, $P_4$ is strongly non-$3$-peripheral of odd diameter because the removal of any peripheral vertex yields a $P_3$ which is not $3$-peripheral. 

An example of a weakly non-$3$-peripheral tree with even diameter is $P_3$. This can be seen because if $c$ is the central vertex of $P_3$, then $T_{c^+}$ is isomorphic to the star $K_{1,3}$ which is $3$-peripheral. However, $P_5$ is strongly non-$3$-peripheral with even diameter via Lemma \ref{lem:weakly/per}. Specifically, $P_5$ has no vertex which is $\diam(P_5)-1$ away from both it's peripheral vertices.}

\end{ex}

Recall that Observation \ref{obs:diamTtilde} states that $T_{v^-}$ operation lowers the diameter by $1$ when applied to a peripheral vertex. This is important because we would like to be able to achieve a $3$-peripheral graph with this. Since this can only be done if $T$ has even diameter (as seen in Lemma \ref{lem:3-peripheral even diam}), it is important to alter the diameter in some fashion. However, when using the $T_{v^+}$ operation to an even diameter vertex, it is important that we do not change the diameter.
Since $T_{v^+}$ only changes the diameter when applied to a peripheral vertex, we never use it on a peripheral vertex.
As we will find in Lemma \ref{lem:weakly/per}, it is useful to apply $T_{v^+}$ to a vertex whose eccentricity is one less than the diameter of our tree.

\begin{lem}\label{lem:weakly/per}
    If $T$ is a weakly non-$3$-peripheral tree with even diameter, say $T_{u^+}$ is $3$-peripheral, then for any peripheral vertex $v$ of $T$, $d(u,v)=\diam(T)-1$.
\end{lem}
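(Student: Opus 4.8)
The plan is to exploit the added-leaf structure of $T_{u^+}$ together with the center of $T$. Write $D=\diam(T)$, which is even, and let $\ell$ denote the leaf of $T_{u^+}$ attached to $u$, so that $d_{T_{u^+}}(\ell,w)=d_T(u,w)+1$ for every $w\in V(T)$ while all other distances are unchanged. First I would show that $u$ is not peripheral in $T$: if it were, then $\epsilon_{T_{u^+}}(\ell)=\epsilon_T(u)+1=D+1$, forcing $\diam(T_{u^+})=D+1$ to be odd and contradicting Lemma~\ref{lem:3-peripheral even diam}. Hence $\epsilon_T(u)\le D-1$, and consequently $\diam(T_{u^+})=\max\{D,\epsilon_T(u)+1\}=D$.

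Next I would extract the key distances. Since $T_{u^+}$ is $3$-peripheral of diameter $D$, there are three vertices pairwise at distance $D$. Not all three can lie in $V(T)$, since $T$ is non-$3$-peripheral, so exactly one of them is $\ell$; call the other two $a$ and $b$. Then $a,b\in V(T)$ with $d_T(a,b)=D$, so $a$ and $b$ realize the diameter of $T$, and $d_{T_{u^+}}(\ell,a)=d_{T_{u^+}}(\ell,b)=D$ yields $d_T(u,a)=d_T(u,b)=D-1$.

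The heart of the argument is to locate $u$ relative to the center of $T$. Because $D$ is even, Theorem~\ref{thm:Jordan} and Observation~\ref{obs:diam-paths-int-center} give a unique central vertex $z$ with $\rad(T)=D/2$, every peripheral vertex at distance $D/2$ from $z$, and two peripheral vertices at distance $D$ precisely when they lie in different components of $T-z$. Let $m$ be the median of $u,a,b$ in $T$ (the common meeting vertex of their pairwise paths). From $d(u,a)+d(u,b)=2d(u,m)+d(a,b)$ together with $d(u,a)=d(u,b)=D-1$ and $d(a,b)=D$, I obtain $d(u,m)=D/2-1$ and $d(m,a)=d(m,b)=D/2$; since $z$ is the midpoint of the $a$--$b$ path, this forces $m=z$. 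Therefore $d(u,z)=D/2-1$, and the projection of $u$ onto the $a$--$b$ path being $z$ means that $u=z$ or $u$ lies in a component of $T-z$ different from those containing $a$ and $b$.

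Finally, let $v$ be any peripheral vertex of $T$. By Lemma~\ref{general tree peripheral lem} applied to the diametral pair $a,b$, either $d(v,a)=D$ or $d(v,b)=D$; say $d(v,a)=D$. Then $v$ and $a$ lie in different components of $T-z$, and since $T$ is non-$3$-peripheral the triple $a,b,v$ cannot be pairwise at distance $D$, forcing $d(v,b)<D$ and hence placing $v$ in the same component as $b$. In particular $v$ lies in a component of $T-z$ different from that of $u$, so the $u$--$v$ path passes through $z$ and $d(u,v)=d(u,z)+d(z,v)=(D/2-1)+D/2=D-1$; the symmetric subcase $d(v,b)=D$ is identical. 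I expect the main obstacle to be the third step---pinning down that $u$ sits ``opposite'' both $a$ and $b$ at the center via the median computation---together with the degenerate case $D=2$ (where $u=z$ and $d(u,v)=D/2=D-1$), which should be checked separately to confirm the formula still delivers the claim.
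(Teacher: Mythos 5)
Your proof is correct, and while it opens the same way as the paper's, its decisive step is genuinely different. Both arguments begin by extracting three pairwise-diameter vertices of $T_{u^+}$, observing that exactly one of them must be the added leaf $\ell$ (since $T$ is non-$3$-peripheral), and concluding $d_T(u,a)=d_T(u,b)=\diam(T)-1$ for the other two; your opening parity argument that $u$ cannot be peripheral, so that $\diam(T_{u^+})=\diam(T)$, makes explicit a point the paper leaves implicit, which is a genuine improvement in rigor. After that the routes diverge. The paper finishes in two lines: for any other peripheral vertex $v$, non-$3$-peripherality gives, without loss of generality, $d(v,a)<\diam(T)$; since $v$ remains peripheral in $T_{u^+}$ and $\ell,a$ realize its diameter, Lemma~\ref{general tree peripheral lem} applied \emph{inside $T_{u^+}$} forces $d_{T_{u^+}}(v,\ell)=\diam(T_{u^+})$, i.e.\ $d(u,v)=\diam(T)-1$. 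You instead apply Lemma~\ref{general tree peripheral lem} inside $T$ to the pair $(a,b)$ and then run a center/median computation: the unique central vertex $z$, every peripheral vertex at distance $\diam(T)/2$ from $z$, the median identity placing $u$ at distance $\diam(T)/2-1$ from $z$ in a component of $T-z$ avoiding both $a$ and $b$, and addition of distances through $z$. Your argument is sound, including the degenerate case $D=2$ where $u=z$; the standard center facts you invoke (uniqueness of $z$ and $\rad(T)=\diam(T)/2$ when the diameter is even) are not literally among the paper's stated results, but they follow by the same eccentricity estimate used in the proof of Observation~\ref{obs:diam-paths-int-center}, so this is a routine gap at worst. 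The trade-off: the paper's route buys brevity by reusing Lemma~\ref{general tree peripheral lem} a second time in the augmented tree, while yours costs more machinery but yields extra structural information---the exact position of $u$ relative to the center and the component structure of the peripheral vertices in $T-z$---that the paper's proof never reveals.
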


\begin{proof}
    Let $u'$ be the added leaf to $u$ in $T_{u^+}$. 
    Since $T_{u^+}$ is $3$-peripheral while $T$ is not, there exist peripheral vertices $v_j$ and $v_k$ in $T$ which realize the diameter of $T$ with each other and with $u'$.  
    If $v_i$ and $v_j$ are the only peripheral vertices of $T$, then we are done. 
    So suppose $v$ is any other peripheral vertex of $T$. 
    Since $T$ is non-$3$-peripheral, either $d(v,v_j)$ or $d(v,v_k)$ is less than $\diam(T)$. 
    Without loss of generality, suppose $d(v,v_j) < \diam(T)$. 
    Since $u'$ and $v_j$ realize the diameter in $T_{u^+}$, applying Lemma \ref{general tree peripheral lem} gives $d_{T_{u^+}}(v,u') = \diam(T_{u^+}) = \diam(T)$. 
    This means $d(v,u) = \diam(T)-1$.
\end{proof}

\section{Strongly and weakly peripheral trees}\label{sec:3}

In this section, we prove our main theorem below which classifies the anti-van der Waerden number of all products of trees when the diameter is even.

\begin{customthm}{1.3}
 Let $T$ and $T'$ be nontrivial trees, where $\diam(T\square T')$ is even. Then 
    \[\aw(T\square T',3) = 
\begin{cases}
    3 & \text{if $T$ or $T'$ is weakly non-$3$-peripheral or isomorphic to $P_2$}, \\

    4 & \text{if $T$ and $T'$ are both strongly non-$3$-peripheral and not $P_2$}.
\end{cases}\]

\end{customthm}

Recall that Theorem \ref{theorem:rsw} says this number will be either three or four. In \cite{BMSWW}, this has already been done when one of the trees is $3$-peripheral in which we get an anti-van der Waerden number of three. As for the remaining cases, the language developed allows us to succinctly categorize the two different possibilities. If both trees are strongly non-$3$-peripheral, then the anti-van der Waerden number is $4$, and if either tree is weakly non-$3$-peripheral, then the anti-van der Waerden number is $3$. These two cases will be split further into the cases of whether our trees both have odd diameter or both have diameter. These four cases can be seen in Propositions \ref{prop:strongly/odd}, \ref{prop:strongly/even}, \ref{prop:weakly/odd}, and \ref{prop:weakly/even}.

The first step in this process is handling when the anti-van der Waerden number is four, which as we stated is when both trees are strongly non-$3$-peripheral. To achieve this, Theorem \ref{theorem:rsw} implies that it suffices to provide a $3$-coloring that is rainbow $3$-AP free. Lemma \ref{lem:even-diam-rb-free-coloring} will provide such coloring, however, this classification differs depending on the parity of the trees' diameters and the argument is separated into Propositions \ref{prop:strongly/odd} and \ref{prop:strongly/even}.

The intuition of why these definitions are important can be seen by exploring past papers. In \cite{SWY,BMSWW,DMS,MW,RSW,finabgroup}, rainbow-free colorings are often constructed using $red,blue,green$ where one color is dominantly used ($green$ in Section \ref{sec:3}) and a small number of vertices are colored with $red$ and $blue$. 
The intuition is that in order to avoid a rainbow $3$-AP, we would like the distance from any $red$ vertex to any $blue$ vertex to be odd and large. 
If the distance were even, then  it is possible for a path with $red$ and $blue$ endpoints to have a midpoint colored $green$ giving us a rainbow 3-AP of the form $(red,green,blue)$. , and if the distance were small enough, then it may be possible to construct a rainbow 3-AP of the form $(red,blue,green)$ or $(blue,red,green)$. 
Therefore, our rainbow-3AP-free coloring in Lemma~\ref{lem:even-diam-rb-free-coloring} was constructed by making the distance between any $red$ and $blue$ vertices as large as possible, i.e. $\diam(T\square T')-1$ since $\diam(T\square T')$ is even.
This was the motivation to turn to subgraphs of $T$ and $T'$ whose diameter is exactly $1$ less than diameter of $T$ or diameter of $T'$. 
While not obvious, it turns out that the correct way to achieve this is with $T_{v^-}$ when the diameter of $T$ is odd and $T_{v^+}$ when the diameter of $T$ is even. 
Observation \ref{obs:diamTtilde} explains that performing the $T_{v^-}$ operation on a tree will lower the diameter by $1$.  If $red$ and $blue$ vertices have distance $\diam(T\square T')-1$, then one must be peripheral. Lemma \ref{lem:weakly/per} now gives a specific and exploitable vertex which is $\diam(T)-1$ from this $red$ or $blue$ vertex.

\begin{lem}\label{lem:even-diam-rb-free-coloring}
    Suppose $T,T'$ are nontrivial trees which are non-$3$-peripheral such that $\diam(T\square T')$ is even. 
    Suppose $v_{1,1}$ and $v_{j,k}$ realize the diameter of $T \square T'$ such that $T_{u_1^-}$ and $T'_{w_1^-}$ are non-$3$-peripheral. 
    Define $c: V(T\square T') \to \{red,blue,green\}$ as follows
    \[c(v_{a,b}) = 
    \begin{cases}
        blue & \text{if $d(v_{a,b},v_{1,1}) = \diam(T\square T') - 1$,} \\
        red & \text{if $d(v_{a,b},v_{j,k}) = \diam(T \square T')$,} \\
        green & \text{otherwise.}
    \end{cases}\]
    Then we have the following:
    \begin{enumerate}[$(i)$]
        \item $c$ is well-defined
        \item\label{3ii} if $c(x)=red$ and $c(y) = blue$, then $d(x,y) = \diam(T \square T')-1$
        \item if $c(x) = red$ and $d(x,y) = \diam(T\square T') - 1$, then $c(y) = blue$.
        \item if $(x,y,z)$ is a rainbow $3$-AP, then $c(y) = blue$.
    \end{enumerate}
\end{lem}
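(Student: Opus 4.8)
The plan is to prove the four parts in order, with (ii) and (iii) reduced to a single structural fact about each factor tree (a \emph{shift lemma}), and (iv) deduced from (ii), (iii), and the bipartiteness of $T\square T'$. First record the coordinates: since $v_{1,1}$ and $v_{j,k}$ realize the diameter, Proposition \ref{proposition:dist} gives $d_T(u_1,u_j)=\diam(T)$ and $d_{T'}(w_1,w_k)=\diam(T')$, so $u_1,u_j$ and $w_1,w_k$ are antipodal pairs and $\diam(T\square T')=\diam(T)+\diam(T')$ is even. For part (i), suppose some $v_{a,b}$ were both red and blue. Red forces $d_T(u_a,u_j)=\diam(T)$ and $d_{T'}(w_b,w_k)=\diam(T')$, while blue forces $d_T(u_a,u_1)+d_{T'}(w_b,w_1)=\diam(T)+\diam(T')-1$, hence one coordinate is at its diameter and the other one short of it. If $d_T(u_a,u_1)=\diam(T)$ then $u_1,u_a,u_j$ are distinct and pairwise at distance $\diam(T)$, contradicting that $T$ is non-$3$-peripheral; the other branch contradicts non-$3$-peripherality of $T'$.

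The engine for (ii) and (iii) is the following \emph{shift lemma}, stated for a tree $S$ with antipodal pair $\alpha,\beta$ and $S_{\alpha^-}$ non-$3$-peripheral (exactly the hypothesis applied to $T$ with $(\alpha,\beta)=(u_1,u_j)$ and to $T'$ with $(\alpha,\beta)=(w_1,w_k)$): for every antipode $\gamma$ of $\beta$ and every vertex $\eta$, one has $d(\eta,\alpha)=\diam(S)-1$ if and only if $d(\eta,\gamma)=\diam(S)-1$. To prove it I pass to the center (Observation \ref{obs:diam-paths-int-center}, Theorem \ref{thm:Jordan}). When $\diam(S)$ is even the center is a single vertex $c$; the peripheral vertices $\alpha,\beta,\gamma$ all sit at distance $\diam(S)/2$ from $c$, and since $d(\alpha,\gamma)=\diam(S)$ would make $\{\alpha,\beta,\gamma\}$ a $3$-peripheral triple, $\alpha$ and $\gamma$ lie in the same component of $S-c$. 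A distance count then shows $d(\eta,\alpha)=\diam(S)-1$ exactly when $\eta$ lies in a component of $S-c$ other than $\alpha$'s with $d(\eta,c)=\diam(S)/2-1$; as $\gamma$ shares $\alpha$'s component this condition is symmetric in $\alpha$ and $\gamma$. When $\diam(S)$ is odd the center is an edge $cc'$ and $\alpha,\gamma$ lie on the same side; here I use $S_{\alpha^-}$: if $\alpha,\gamma$ were in different components of $S-c$, then taking $\beta'$ to be the neighbor of $\beta$ toward the center, $\{\alpha,\gamma,\beta'\}$ would be pairwise at distance $\diam(S)-1=\diam(S_{\alpha^-})$ (using Observation \ref{obs:diamTtilde}) inside $S_{\alpha^-}$, contradicting that $S_{\alpha^-}$ is non-$3$-peripheral. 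So $\alpha,\gamma$ again share a component and the characterization is symmetric in $\alpha,\gamma$.

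Granting the shift lemma, (ii) follows by splitting a red $x=v_{a,b}$ and blue $y=v_{p,q}$ into the case $d_T(u_p,u_1)=\diam(T),\ d_{T'}(w_q,w_1)=\diam(T')-1$ and its mirror. In the $T$-coordinate $u_a,u_p$ are both antipodes (of $u_j$ and $u_1$), so Lemma \ref{lem:3.10} gives $d_T(u_a,u_p)=\diam(T)$; in the $T'$-coordinate the shift lemma gives $d_{T'}(w_b,w_q)=\diam(T')-1$; summing gives $d(x,y)=\diam(T\square T')-1$. Part (iii) runs the same computation with the shift lemma in its converse direction, together with Lemma \ref{general tree peripheral lem} and non-$3$-peripherality to pin the antipodal coordinate (a peripheral $u_p$ that is diameter-away from $u_j$ rather than $u_1$ would, with $u_a$, create a forbidden $3$-peripheral triple).

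Finally, for (iv), given a rainbow $3$-AP $(x,y,z)$ with common difference $d$, I case on $c(y)$. If $c(y)=red$, the blue endpoint is at distance $d$ from $y$, so $d=\diam(T\square T')-1$ by (ii), and then (iii) forces the remaining (green) endpoint to be blue, a contradiction. If $c(y)=green$, the endpoints are red and blue, so (ii) gives $d(x,z)=\diam(T\square T')-1$, which is odd; but $T\square T'$ is bipartite and $d(x,y)=d(y,z)$ places $x,z$ in the same part, forcing $d(x,z)$ to be even, a contradiction. Hence $c(y)=blue$. The main obstacle is the odd-diameter case of the shift lemma: this is the only place the hypothesis ``$S_{\alpha^-}$ non-$3$-peripheral'' is genuinely used, and extracting the contradiction hinges on choosing the correct third vertex $\beta'$ so that an illegitimate split of the antipodes of $\beta$ manifests as a forbidden $3$-peripheral triple inside $S_{\alpha^-}$.
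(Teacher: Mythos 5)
Your proposal is correct, and it reaches (ii) and (iii) by a genuinely different route even though the outer skeleton coincides with the paper's: your part (i) is the same $3$-peripherality contradiction, your part (iv) is the same two-case argument (a $green$ middle vertex killed by bipartiteness and the odd distance from (ii), a $red$ middle killed by (ii) followed by (iii)), and in (ii)/(iii) you pin the coordinate realizing the full diameter with Lemma \ref{lem:3.10} exactly as the paper does. The divergence is in how the $\diam-1$ coordinate is transferred. The paper verifies, case by case, that the four relevant vertices (e.g.\ $w_1,w_{k-1},w_{x_2},w_{y_2}$, after dispatching coincidences such as $w_{x_2}=w_1$) all lie in the pruned tree $T'_{w_1^-}$, and then applies Lemma \ref{lem:3.10} \emph{inside} $T'_{w_1^-}$, whose diameter is $\diam(T')-1$ by Observation \ref{obs:diamTtilde}. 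You instead isolate a standalone symmetric ``shift lemma'' and prove it through the center (Theorem \ref{thm:Jordan}, Observation \ref{obs:diam-paths-int-center}); I checked your distance characterization, including the within-branch bound $d(\eta,\alpha)\le \diam(S)-2$ (which holds since every vertex of $\alpha$'s branch is within $\rad(S)$ of the center and the meeting vertex of the $\eta$--$\alpha$ path sits at distance at least $1$ from it), and your odd-case triple $\{\alpha,\gamma,\beta'\}$, which is indeed pairwise at distance $\diam(S)-1=\diam\left(S_{\alpha^-}\right)$ inside $S_{\alpha^-}$, so the hypothesis on $S_{\alpha^-}$ correctly forces $\alpha$ and $\gamma$ into the same branch; both steps are sound. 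Your packaging buys three things: one lemma serves both implications (the paper runs the forward and converse arguments separately in (ii) and (iii)), the degenerate coincidences are absorbed rather than handled by hand (when $\gamma=\alpha$ the claim is vacuous), and it makes visible that non-$3$-peripherality of $S_{\alpha^-}$ is genuinely needed only when $\diam(S)$ is odd --- consistent with the paper's remark in Proposition \ref{prop:strongly/even} that for even diameter any peripheral pruning is automatically non-$3$-peripheral. The paper's version, in exchange, avoids center analysis entirely and reuses already-proven machinery as its only tool. Two small points to tighten in a full write-up: your even-case characterization misfires at $\diam(S)=2$ (there $\diam(S)/2-1=0$ would point at $c$ itself, which lies in no component of $S-c$), though this is harmless since $P_3$ is the only non-$3$-peripheral tree of diameter $2$ and then $\gamma=\alpha$; and you should record that $\beta$, being peripheral in a tree, is a leaf, so ``the neighbor of $\beta$ toward the center'' is its unique neighbor, which is what guarantees $\beta'$ lies on both the $\alpha$--$\beta$ and $\gamma$--$\beta$ geodesics and hence $d(\alpha,\beta')=d(\gamma,\beta')=\diam(S)-1$.
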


\begin{proof}
    \textbf{(i)} Note that by the definition of $c$, no vertex will be $blue$ and $green$, and no vertex will be $red$ and $green$.  Thus, we only need to check if a vertex will be colored both $red$ and $blue$.
    For the sake of contradiction, assume $v_{a,b}$ is such a vertex, that is, $d(v_{a,b},v_{1,1}) = \diam(T\square T') - 1$ and $d(v_{a,b},v_{j,k}) = \diam(T \square T')$. 
    Since $d(v_{a,b},v_{1,1}) = \diam(T\square T') - 1$, either $d_T(u_a,u_1)=\diam(T)$ or $d_{T'}(w_b,w_1)=\diam(T')$.
    Further, $d(v_{a,b},v_{j,k}) = \diam(T \square T')$ implies
    and $d_T(u_a,u_j) = \diam(T)$ and $d_{T'}(w_b,w_k) = \diam(T')$.
    In either case, either $T$ or $T'$ is $3$-peripheral, a contradiction.
    Thus, $c$ is well-defined.
    
    \textbf{(ii)} Suppose $c(x) = red$ with $x = v_{x_1,x_2}$ and $c(y) = blue$ with $y = v_{y_1,y_2}$.  
    Since $d(v_{y_1,y_2},v_{1,1}) = \diam(T\square T') - 1$, it follows that $d_T(u_{y_1},u_1) = \diam(T)$ and $d_{T'}(w_1,w_{y_2}) = \diam(T')-1$, or that $d_T(u_{y_1},u_1) = \diam(T) - 1$ and $d_{T'}(w_1,w_{y_2}) = \diam(T')$.

    First, suppose that \[d_T(u_{y_1},u_1) = \diam(T) \text{ and } d_{T'}(w_1,w_{y_2}) = \diam(T')-1.\] 
    Since $d(v_{x_1,x_2},v_{j,k}) = \diam(T \square T')$, it follows that $d_T(u_{x_1},u_j) = \diam(T)$ and $d_{T'}(w_{x_2},w_k) = \diam(T')$. 
    So, Lemma \ref{lem:3.10} implies that $d_T(u_{x_1},u_{y_1}) = \diam(T)$. 
    
    It remains to show $d(w_{x_2},w_{y_2})=\diam(T')-1$. 
    We first rule out some trivial cases. 
    If $w_{x_2}=w_1$, then we immediately have our desired result. 
    Notice $w_{x_2} \neq w_k$ and $w_1 \neq w_k$ since both pairs realize the diameter.      
    That is, we can suppose $w_1,w_k$ and $w_{x_2}$ are distinct. 
    Let $w_{k-1}$ be the unique neighbor of $w_k$, and note that Observation \ref{obs:diamTtilde} implies that $ \diam(T'_{w_1^-})= \diam(T')-1= d_{T'}(w_{x_2},w_{k-1})$.
    Additionally, $d_{T'}(w_{x_2},w_1) \neq \diam(T')$, otherwise $w_1,w_k,w_{x_2}$ are pairwise diameter apart, contradicting that $T'$ is non-$3$-peripheral. 
    This implies $w_{x_2} \in V(T'_{w_1^-})$.
    Recall that $d_{T'}(w_1,w_{y_2}) = \diam(T') - 1 = \diam(T'_{w_1^-})$ implying that $w_{y_2} \in V(T'_{w_1^-})$.  
    Since $T'_{w_1^-}$ is non-$3$-peripheral, Lemma \ref{lem:3.10} can be applied to vertices $w_1,w_{k-1},w_{x_2}$ and $w_{y_2}$ in $T'_{w_1^-}$ to get 
    \[d_{T'}(w_{x_2},w_{y_2}) = d_{T'_{w_1^-}}(w_{x_2},w_{y_2}) = \diam(T'_{w_1^-}) = \diam(T')-1.\] 
    Thus, \[d(x,y) = d_T(u_{x_1},u_{y_1}) + d_{T'}(w_{x_2},w_{y_2}) = \diam(T) + \diam(T') - 1 = \diam(T \square T')-1.\] 

     Second, if we instead suppose \[d_T(u_{y_1},u_1) = \diam(T)-1 \text{ and }d_{T'}(w_1,w_{y_2}) = \diam(T'),\] then a similar argument shows that $u_1,u_j$ and $u_{x_1}$ are distinct and that applying Lemma \ref{lem:3.10} to $w_1,w_k,w_{x_2}$ and $w_{y_2}$ in $T'$ and to $u_1,u_{j-1},u_{x_1}$ and $u_{y_1}$ in $T_{u_1^-}$ yields the same result. 

    \textbf{(iii)} Since $c(x) = red$, we have $d(x,v_{j,k}) = \diam(T\square T')$ implying that $d_T(u_{x_1},u_j) = \diam(T)$ and $d_{T'}(w_{x_2}, w_k) = \diam(T')$. 
    Since $d(x,y) = \diam(T\square T')-1$, we have that $d_T(u_{x_1},u_{y_1}) = \diam(T)$ and $d_{T'}(w_{x_2},w_{y_2}) = \diam(T')-1$ or that $d_T(u_{x_1},u_{y_1}) = \diam(T)-1$ and $d_{T'}(w_{x_2},w_{y_2}) = \diam(T')$. 
    First, suppose $d_T(u_{x_1},u_{y_1}) = \diam(T)$ and $d_{T'}(w_{x_2},w_{y_2}) = \diam(T')-1$. 
    Notice that since \[d_T(u_1,u_j) = d_T(u_{x_1},u_j) = d_T(u_{x_1},u_{y_1}) = \diam(T),\] Lemma \ref{lem:3.10} implies that $d_T(u_1,u_{y_1}) = \diam(T)$. 
    
    It remains to show $d(w_1,w_{y_2})=\diam(T')-1$. 
    If $w_{x_2}=w_1$, then we immediately have our desired result. 
    Notice $w_{x_2} \neq w_k$ and $w_1 \neq w_k$ since both pairs realize the diameter. 
    Thus we can suppose $w_1,w_k$ and $w_{x_2}$ are distinct.
    It now follows that $d_{T'}(w_{x_2},w_{k-1})=\diam(T')-1$. 
    Notice that \[d_{T'}(w_1,w_{k-1}) = d_{T'}(w_{x_2},w_{k-1}) = d_{T'}(w_{x_2},w_{y_2}) = \diam(T')-1 = \diam(T'_{w_1^0}).\] 
    Since $T'_{w_1^-}$ is non-$3$-peripheral, we now show $w_{k-1},w_{x_2},w_1, w_{y_2} \in V(T'_{w_1^-})$ so that Lemma \ref{lem:3.10} applied to $T'_{w_1^-}$ gives $d_{T'}(w_1,w_{y_2}) = \diam(T')-1$. 
    Since $d_{T'}(w_1,w_k) =\diam(T')$, we have $w_1,w_{k-1} \in V(T'_{w_1^-})$. 
    If $w_{x_2} \notin V(T'_{w_1^-})$, then $d(w_1,w_{x_2})=\diam(T')$ and $w_1,w_k$ and $w_{x_2}$ pairwise realize the diameter, contradicting that $T'$ is non-$3$-peripheral. 
    If $w_{y_2} \notin V(T'_{w_1^-})$, then $d_{T'}(w_1,w_{y_2})=\diam(T')$, then combining Lemma \ref{lem:3.10} and that \[d_{T'}(w_1,w_{y_2}) = d_{T'}(w_1,w_k) = d_{T'}(w_{x_2},w_k) = \diam(T')\] shows that $d_{T'}(w_{x_2},w_{y_2})= \diam(T')$, a contradiction. 
    Thus, $d_{T'_{w_1^-}}(w_1,w_{y_2}) = d_{T'}(w_1,w_{y_2}) = \diam(T')-1$, as desired. 
    Finally, \[d(v_{1,1},y) = d_T(u_1,u_{y_1}) + d_{T'}(w_1,w_{y_2}) = \diam(T\square T')-1,\] showing that $c(y)=blue$. 

    Alternatively, if $d_T(u_{x_1},u_{y_1}) = \diam(T)-1$ and $d_{T'}(w_{x_2},w_{y_2}) = \diam(T')$, then a similar argument shows that $u_1,u_j$ and $u_{x_1}$ are distinct and that applying Lemma \ref{lem:3.10} to $w_1,w_k,w_{x_2}$ and $w_{y_2}$ in $T'$ and to $u_1,u_{j-1},u_{x_2}$ and $u_{y_2}$ in $T_{u_1^-}$ yields the same result.

    \textbf{(iv)} Because of the symmetry of $3$-APs, any rainbow $3$-AP can be classified by the color of the middle vertex. Suppose $(x,y,z)$ is a rainbow $3$-AP. First, assume $c(y)=green$. Then $x$ and $z$ are colored $red$ and $blue$ in some order. 
    Recall that $T \square T'$ is bipartite. Since $\diam(T\square T')$ is even, (ii) implies that $x$ and $z$ are in different partite sets. 
    Thus, no such $y$ can be an equal distance from $x$ and $z$, contradicting that $(x,y,z)$ is a $3$-AP. 
    Second, assume $c(y)=red$. 
    Since one of $x$ or $z$ is $blue$, (ii) implies the common distance of our $3$-AP is $\diam(T\square T')-1$. But (iii) implies any vertex distance $\diam(T\square T')-1$ from $y$ has color $blue$, contradicting that one of $x$ or $z$ is $green$. Thus, $c(y) = blue$.
\end{proof}

The remainder of this section completes the proof of the main theorem, showing that when $\diam(T\square T')$ is even, we have \[\aw(T\square T',3) = 
\begin{cases}
    3 & \text{if $T$ or $T'$ is weakly non-$3$-peripheral or isomorphic to $P_2$}, \\
    4 & \text{if $T$ and $T'$ are both strongly non-$3$-peripheral and not $P_2$}.
\end{cases}\]
This will be broken into four cases depending on whether the trees have odd or even diameter and on weakly and strongly non-$3$-peripheral properties of trees. The case when one of the trees is $P_2$ is quite different, since $(P_2)_{u^-}$ is an isolated vertex. Therefore, we separate this case from the remaining trees in Lemma \ref{lem:annoyingP2}.

 We provide the motivation for two of the four cases below. First, if $T$ is a strongly non-$3$-peripheral tree of odd diameter, then for some vertex $u$, $T_{u^-}$ is non-$3$-peripheral and Lemma \ref{lem:even-diam-rb-free-coloring} gives a rainbow-free $3$-coloring of $T_{u^-} \square T'$. By carefully extending this coloring to $T \square T'$, we can avoid rainbow $3$-APs. Applying Theorem \ref{theorem:rsw}, this means the anti-van der Waerden number is four. 
Second, if $T$ is a weakly non-$3$-peripheral tree of even diameter, consider an arbitrary $3$-coloring of $T \square T'$. This can be extended to a $3$-coloring of $T_{v^+} \square T'$ where $T_{v^+}$ is $3$-peripheral. Then Theorem \ref{TxG 3-per} says this coloring admits a $3$-AP. If we choose our extension coloring carefully, we can guarantee that this $3$-AP is in $T \square T'$, too. The formal proof the authors provide for Proposition \ref{prop:weakly/even} is a slight variation of this idea. 
It is based on the fact that any $3$-peripheral tree with all its leaves removed will remain $3$-peripheral or will be a single vertex. 
The proof looks for a $3$-AP in this subgraph rather than in $T_{v^+} \square T'$. 
That way, we are guaranteed that any $3$-AP we find will also appear in the parent graph.  While the remaining two cases are less intuitive, they worked out as desired (see Propositions \ref{prop:strongly/even} and \ref{prop:weakly/odd}). 

We begin showing that if $T$ and $T'$ are strongly non-$3$-peripheral trees, where neither are $P_2$, then their product $T \square T'$ has anti-van der Waerden number $4$.  Before we begin Proposition \ref{prop:strongly/odd}, as a reminder, a tree $T$ with odd diameter is called strongly non-$3$-peripheral if the tree $T_{v^-}$ is non-$3$-peripheral for every peripheral $v \in V(T)$.

\begin{prop}\label{prop:strongly/odd}
    Suppose $T$ and $T'$ are strongly non-$3$-peripheral trees with odd diameter of at least $3$. Then, $\aw(T\square T',3) = 4$. 
\end{prop}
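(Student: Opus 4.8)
The plan is to establish the lower bound $\aw(T\square T',3)\ge 4$; since Theorem~\ref{theorem:rsw} gives $\aw(T\square T',3)\le 4$, it suffices to exhibit an exact $3$-coloring of $T\square T'$ with no rainbow $3$-AP. Because $\diam(T)$ and $\diam(T')$ are both odd, $\diam(T\square T')$ is even, so I would build this coloring with Lemma~\ref{lem:even-diam-rb-free-coloring}. First I would use Definition~\ref{defn:weak/strong non-3-per}(i) to choose peripheral vertices $u_1\in V(T)$ and $w_1\in V(T')$ for which $T_{u_1^-}$ and $T'_{w_1^-}$ are non-$3$-peripheral, then choose $u_j$ and $w_k$ realizing the diameters of $T$ and $T'$ with $u_1$ and $w_1$ respectively, so that $v_{1,1}$ and $v_{j,k}$ realize the (even) diameter of $T\square T'$. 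Lemma~\ref{lem:even-diam-rb-free-coloring} then produces a well-defined coloring $c$; I would check that all three colors occur ($v_{1,1}$ is $red$, $v_{j,k}$ is $green$, and $(u_j,w_{k-1})$ is $blue$ for $w_{k-1}$ a neighbor of $w_k$), so $c$ is exact, and by part~(iv) every rainbow $3$-AP has a $blue$ midpoint.

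The crux is therefore to rule out rainbow $3$-APs with a $blue$ midpoint. If $(x,y,z)$ is such an AP with $c(y)=blue$, then $\{c(x),c(z)\}=\{red,green\}$; taking $c(x)=red$, part~(ii) forces the common difference to equal $\diam(T\square T')-1$. So it suffices to prove the red/blue-swapped analogue of part~(iii): if $c(y)=blue$ and $d(y,z)=\diam(T\square T')-1$ then $c(z)=red$, which contradicts $c(z)=green$. Writing $y=(u_{y_1},w_{y_2})$ and $z=(u_{z_1},w_{z_2})$, I would split each of $d(v_{1,1},y)=\diam(T\square T')-1$ and $d(y,z)=\diam(T\square T')-1$ into its $T$- and $T'$-components (each realizing either the full diameter or one less in the corresponding tree), and analyze the cases according to which coordinate carries the full diameter.

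When the two splittings are \emph{aligned} (say both place the full diameter on the $T$-coordinate), Lemma~\ref{lem:3.10} applied in $T$ to the diameter pair $u_1,u_{y_1}$ gives $d_T(u_{z_1},u_j)=\diam(T)$, and the $T'$-coordinate leaves both distances equal to $\diam(T')-1$. If $w_{z_2}\in V(T'_{w_1^-})$, then Lemma~\ref{lem:3.10} inside $T'_{w_1^-}$ (non-$3$-peripheral by our choice of $w_1$) yields $d_{T'}(w_{z_2},w_{k-1})=\diam(T')-1$, and since $w_k$ is a leaf this bumps to $d_{T'}(w_{z_2},w_k)=\diam(T')$, so $c(z)=red$. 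If instead $w_{z_2}\notin V(T'_{w_1^-})$, then $w_{y_2}$ lies at distance $\diam(T')-1$ from each end of the diameter geodesic $w_1$--$w_{z_2}$, forcing it onto the exact midpoint of that geodesic, which is impossible since $\diam(T')$ is odd. In the \emph{crossed} splitting, $w_{y_2}$ is peripheral and at distance $\diam(T')-1$ from $w_1$; Lemma~\ref{general tree peripheral lem} then gives $d_{T'}(w_{y_2},w_k)=\diam(T')$, so $w_1$, $w_{y_2}$, and $w_{k-1}$ are pairwise $\diam(T')-1$ apart, exhibiting $T'_{w_1^-}$ as $3$-peripheral and contradicting the choice of $w_1$. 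The symmetric possibilities are handled by exchanging $T$ and $T'$.

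I expect this case analysis to be the main obstacle, and in particular the fact that both hypotheses are used, in different places. Strong non-$3$-peripherality---which is exactly what makes $T_{u_1^-}$ and $T'_{w_1^-}$ non-$3$-peripheral---is what kills the crossed splittings, whereas the oddness of $\diam(T)$ and $\diam(T')$ is what kills the stray sub-case of the aligned splitting, since no vertex can sit at the midpoint of an odd-length geodesic. The remaining work is careful bookkeeping: tracking which coordinate carries the full diameter, and reducing all the symmetric cases to these two mechanisms.
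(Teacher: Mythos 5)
Your proposal is correct and takes essentially the same route as the paper's proof: the coloring of Lemma~\ref{lem:even-diam-rb-free-coloring} with the same choice of $v_{1,1}$ and $v_{j,k}$, reduction via parts (ii) and (iv) to a blue-midpoint $3$-AP with common difference $\diam(T\square T')-1$, and the same four-way case split resolved by the same three mechanisms (Lemma~\ref{lem:3.10} in $T$ and in $T'_{w_1^-}$, the odd-diameter midpoint parity contradiction via intersecting geodesics, and exhibiting $T'_{w_1^-}$ as $3$-peripheral through Lemma~\ref{general tree peripheral lem}). The only differences are cosmetic: you relabel the endpoints (your ``swapped part (iii)'' for the green endpoint is the paper's case of Equations~\ref{eq1} and~\ref{eq3}), you add an explicit exactness check the paper leaves implicit, and you elide two one-line degenerate checks the paper makes explicit (distinctness of $w_1,w_{y_2},w_{k-1}$, and $w_{z_2}\neq w_k$ before bumping to $d_{T'}(w_{z_2},w_k)=\diam(T')$), both of which follow from $\diam(T')$ being odd and at least $3$.
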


\begin{proof}
    Since $T$ and $T'$ are strongly non-$3$-peripheral, there exist peripheral vertices $u_1 \in V(T)$ and $w_1 \in V(T')$ such that $T_{u_1^-}$ and $T'_{w_1^-}$ are non-$3$-peripheral. 
    Since $u_1$ and $w_1$ are peripheral, there exist $u_j \in V(T)$ and $w_k \in V(T')$ such that $d_T(u_1,u_j) = \diam(T)$ and $d_{T'}(w_1,w_k) = \diam(T')$. 
    Now color $T\square T'$ using the coloring from Lemma \ref{lem:even-diam-rb-free-coloring} with $v_{1,1}$ and $v_{j,k}$ playing the same role as in the lemma. 
    If $(x,y,z)$ is a rainbow $3$-AP, then Lemma \ref{lem:even-diam-rb-free-coloring}(ii) and (iv) give that the common difference is $\diam(T\square T') - 1$ and, without loss of generality, that $c(x) = green$, $c(y) = blue$ and $c(z) = red$.
    Suppose $x=v_{x_1,x_2}$ and $y=v_{y_1,y_2}$.
    
    Suppose that $x=v_{x_1,x_2}$ and $y=v_{y_1,y_2}$. 
    Since $c(y) = blue$, we have that $d(y,v_{1,1}) = \diam(T\square T') - 1$ implying that either both 
    
\begin{equation}\label{eq1}d_T(u_1,u_{y_1}) = \diam(T) \text{  and  } d_{T'}(w_1,w_{y_2}) = \diam(T')-1\end{equation}
or

\begin{equation}\label{eq2}
d_T(u_1,u_{y_1}) = \diam(T) - 1 \text{  and  } d_{T'}(w_1,w_{y_2}) = \diam(T').\end{equation}

Additionally, since $d(x,y) = \diam(T\square T') - 1$, it follows that

\begin{equation}\label{eq4}d_T(u_{x_1},u_{y_1}) = \diam(T) - 1 \text{  and  } d_{T'}(w_{x_2},w_{y_2}) = \diam(T')\end{equation}

or

\begin{equation}\label{eq3}d_T(u_{x_1},u_{y_1}) = \diam(T)  \text{  and  } d_{T'}(w_{x_2},w_{y_2}) = \diam(T')-1. \end{equation}

    First suppose that Equations from \ref{eq1} and \ref{eq4} hold.
    Recall that $d_{T'}(w_1,w_k) = \diam(T')$. 
    Since $d_{T'}(w_1,w_{y_2})$ $< \diam(T')$ and $w_{y_2}$ is a peripheral vertex of $T'$, Lemma \ref{general tree peripheral lem} implies that $d_{T'}(w_{y_2},w_k) = \diam(T')$.
    Let $w_{k-1}$ be the neighbor of $w_k$ in $T'$ and notice that $w_{k-1}, w_{y_2}$ and $w_1$ are vertices in $T'_{w_1^-}$.  
    Additionally, Observation \ref{obs:diamTtilde} implies $\diam(T'_{w_1^-}) = \diam(T') -1$.
    Thus, 
    \[\diam(T'_{w_1^-}) = d_{T'}(w_1,w_{y_2}) = d_{T'}(w_1,w_{k-1}) = d_{T'}(w_{y_2},w_{k-1}).\] 
    Furthermore, these three vertices are distinct since otherwise we have that $\diam(T')=1<3$.
    Thus, $T'_{w_1^-}$ is $3$-peripheral, a contradiction. 

    Second, suppose that Equations \ref{eq1} and \ref{eq3} hold. 
    Since $d_T(u_1,u_j) = \diam(T)$, Lemma \ref{lem:3.10} implies that $d_T(u_{x_1},u_j) = \diam(T)$. 
  
    Consider the case where $d_{T'}(w_1,w_{x_2}) = \diam(T')$. Let $P$ and $P'$ represent the $w_1-w_{y_2}$ and $w_{x_2}-w_{y_2}$ paths, respectively. 
    Let $w_\ell \in V(T')$ be the vertex in $V(P) \cap V(P')$ nearest to $w_1$. 
    Then 
        \[\diam(T')-1  = d_{T'}(w_1,w_{y_2}) = d_{T'}(w_1,w_\ell) + d_{T'}(w_\ell,w_{y_2}), \] 
        \[\diam(T')-1  = d_{T'}(w_{x_2},w_{y_2}) = d_{T'}(w_{x_2},w_\ell) + d_{T'}(w_\ell,w_{y_2}), \text{ and} \]
        \[\diam(T')  = d_{T'}(w_1,w_{x_2}) = d_{T'}(w_1,w_\ell) + d_{T'}(w_\ell,w_{x_2}).\]
    The first two equations imply that $d_{T'}(w_1,w_\ell) = d_{T'}(w_{x_2},w_\ell)$ and the third implies that both distances equal $\diam(T')/2$. However, $\diam(T')$ is odd, a contradiction. 
    Thus, $d_{T'}(w_1,w_{x_2})<\diam(T')$. It follows that $w_{x_2} \in V(T'_{w_1^-})$ and applying Lemma \ref{lem:3.10} to $w_1$,$w_{y_2}$,$w_{x_2}$,$w_{k-1}$ and graph $T'_{w_1^-}$ gives $d_{T'}(w_{x_2}, w_{k-1}) = \diam(T')-1$.
    If $w_{x_2}=w_k$, then the previous sentence implies $\diam(T')$ is even, a contradiction. 
    This means $w_{x_2} \neq w_k$ so the $w_{x_2} - w_k$ path contains $w_{k-1}$ and we can conclude $d_{T'}(w_{x_2},w_k) = \diam(T')$.
    Finally, we have \[d(x,v_{j,k}) = d_T(u_{x_1},u_j) + d_{T'}(w_{x_2},w_k) = \diam(T) + \diam(T') = \diam(T\square T'),\] showing that $c(x)=red$, contradicting the assumption that $c(x) = green$.

    The case where the Equations from \ref{eq2} and \ref{eq3} hold has an argument similar to the case where Equations \ref{eq1} and \ref{eq4} hold.
     The case where the Equations from \ref{eq2} and \ref{eq4} hold has an argument similar to the case where Equations \ref{eq1} and \ref{eq3} hold.
    \end{proof}

    Recall that non-$3$-peripheral tree $T$ with even diameter is strongly non-$3$-peripheral if $T_{v^+}$ is non-$3$-peripheral for all $v \in V(T)$.

\begin{prop}\label{prop:strongly/even}
    Suppose $T$ and $T'$ are nontrivial, strongly non-$3$-peripheral trees with even diameter. Then $\aw(T\square T',3) = 4$. 
\end{prop}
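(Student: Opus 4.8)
The plan is to establish the lower bound $\aw(T\square T',3)\ge 4$ by producing an exact $3$-coloring that avoids rainbow $3$-APs; combined with Theorem \ref{theorem:rsw} this yields equality. To build the coloring I would first fix peripheral vertices $u_1\in V(T)$ and $w_1\in V(T')$ with diameter-partners $u_j$ and $w_k$, so that $v_{1,1}$ and $v_{j,k}$ realize $\diam(T\square T')$. Since $\diam(T)$ and $\diam(T')$ are even, Observation \ref{obs:diamTtilde} gives that $T_{u_1^-}$ and $T'_{w_1^-}$ have odd diameter, and Lemma \ref{lem:3-peripheral even diam} (a $3$-peripheral tree has even diameter) then forces both to be non-$3$-peripheral automatically. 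Thus Lemma \ref{lem:even-diam-rb-free-coloring} applies, and I take $c$ to be the $red$/$blue$/$green$ coloring it supplies. To see $c$ is surjective, note that a strongly non-$3$-peripheral tree of even diameter has diameter at least $4$ (a diameter-$2$ tree is a star, which is either $3$-peripheral or $P_3$, and neither is strongly non-$3$-peripheral), so $\diam(T\square T')\ge 8$; consequently $v_{1,1}$ is $red$, any neighbor of $v_{1,1}$ is $green$ by a parity check, and the penultimate vertex of a geodesic from $v_{1,1}$ to $v_{j,k}$ is $blue$.

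It remains to show $c$ has no rainbow $3$-AP, and this is exactly where the strongly non-$3$-peripheral hypothesis must enter: for a weakly non-$3$-peripheral even tree the same coloring does contain a rainbow $3$-AP, consistent with Proposition \ref{prop:weakly/even}. Suppose for contradiction that $(x,y,z)$ is a rainbow $3$-AP. Lemma \ref{lem:even-diam-rb-free-coloring}$(iv)$ forces $c(y)=blue$, and applying part $(ii)$ to the $red$ endpoint together with $y$ shows the common difference equals $\diam(T\square T')-1$; without loss of generality $c(x)=green$ and $c(z)=red$. Writing $x=v_{x_1,x_2}$, $y=v_{y_1,y_2}$, $z=v_{z_1,z_2}$ and using Proposition \ref{proposition:dist}, I would then mirror the bookkeeping of Proposition \ref{prop:strongly/odd}: the relation $c(y)=blue$ (i.e. $d(y,v_{1,1})=\diam(T\square T')-1$) splits into two coordinate cases, the relation $c(z)=red$ pins both coordinates of $z$ to diameter distance from $u_j$ and $w_k$, and each of $d(x,y)=d(y,z)=\diam(T\square T')-1$ again splits into two coordinate cases, one factor realizing its diameter and the other falling one short.

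In every resulting case the target is one of two contradictions. In some cases, Lemma \ref{lem:3.10} applied inside $T$, $T'$, or the odd-diameter subtrees $T_{u_1^-}$, $T'_{w_1^-}$ (where distances agree by isometry), together with Lemma \ref{general tree peripheral lem}, forces $d(x,v_{j,k})=\diam(T\square T')$, i.e. $c(x)=red$, contradicting $c(x)=green$. In the remaining cases I would instead extract a single vertex $v$ of $T$ (respectively $T'$) lying at distance $\diam(T)-1$ (respectively $\diam(T')-1$) from two peripheral vertices that are themselves diameter apart. Attaching a leaf at $v$ then places the new leaf at distance exactly $\diam(T)$ from both of those peripheral vertices, so together they form three pairwise-diameter vertices; hence $T_{v^+}$ (respectively $T'_{v^+}$) is $3$-peripheral, contradicting that $T$ (respectively $T'$) is strongly non-$3$-peripheral by Definition \ref{defn:weak/strong non-3-per}. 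The vertex $v$ is precisely the one predicted by Lemma \ref{lem:weakly/per}, which is why the $T_{v^+}$-operation is the right tool here.

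The hard part will be the case bookkeeping of the previous paragraph, and in particular the degenerate sub-cases in which the extracted vertices coincide---for instance when the $red$ endpoint $z$ shares a tree-coordinate with $v_{1,1}$, which is forced whenever a factor has a unique diametral pair, as in $P_5\square P_5$ (where Theorem \ref{PmxPn} confirms $\aw=4$). In such sub-cases the two ``diameter-apart peripheral vertices'' collapse to one, the $T_{v^+}$ argument is unavailable, and one must instead use the evenness of both diameters to rule out the configuration or to locate $red$ at $x$ directly. Verifying that every case and sub-case terminates in one of the two contradictions above is the crux; once that is done, $c$ is an exact rainbow-free $3$-coloring, so $\aw(T\square T',3)\ge 4$, and Theorem \ref{theorem:rsw} gives $\aw(T\square T',3)=4$.
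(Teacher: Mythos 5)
Your proposal follows essentially the same route as the paper's proof: the paper likewise notes that $T_{u_1^-}$ and $T'_{w_1^-}$ are automatically non-$3$-peripheral by parity (Observation \ref{obs:diamTtilde} plus Lemma \ref{lem:3-peripheral even diam}), applies the coloring of Lemma \ref{lem:even-diam-rb-free-coloring} with parts $(ii)$ and $(iv)$ pinning the hypothetical rainbow $3$-AP, runs the same $2\times 2$ coordinate case split, and lands on exactly your two contradictions --- forcing $c(x)=red$ via chains of Lemma \ref{lem:3.10}, or exhibiting a vertex at distance $\diam(T')-1$ from a diametral pair so that $T'_{w_{y_2}^+}$ is $3$-peripheral, with the degenerate coincidences dispatched by evenness/midpoint parity and by noting that no star is strongly non-$3$-peripheral. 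The bookkeeping you defer is carried out in the paper and terminates precisely as you predict, so your plan is sound (and your explicit surjectivity check is a detail the paper leaves implicit).
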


\begin{proof}
    First notice that Lemmas \ref{lem:3-peripheral even diam} and \ref{obs:diamTtilde} imply that any peripheral vertices $u_1\in V(T)$ and $w_1\in V(T')$ have the property that $T_{u_1^-}$ and $T'_{w_1^-}$ are non-$3$-peripheral. 
    Let $c$ be the coloring as in Lemma \ref{lem:even-diam-rb-free-coloring} and let $v_{1,1},v_{j,k} \in V(T\square T')$ realize the diameter of $T\square T'$ be defined as in the previous two arguments. 
   
    For the sake of contradiction, assume $(x,y,z)$ is a rainbow $3$-AP.
    Lemma \ref{lem:even-diam-rb-free-coloring}(ii) and (iv) now gives that the common difference is $\diam(T\square T') - 1$ and, without loss of generality, that $c(x) = green$, $c(y) = blue$ and $c(z) = red$.
    Suppose $x=v_{x_1,x_2}$ and $y=v_{y_1,y_2}$.  Since $c(y) = blue$, we have $d(y,v_{1,1}) = \diam(T\square T') - 1$ implying that
    
   \begin{equation}\label{eq5}d_T(u_1,u_{y_1}) = \diam(T) \text{ and } d_{T'}(w_1,w_{y_2}) = \diam(T')-1\end{equation}
   
   or that  
   
   \begin{equation}\label{eq6}d_T(u_1,u_{y_1}) = \diam(T) - 1 \text{ and } d_{T'}(w_1,w_{y_2}) = \diam(T').\end{equation} 
   
    Additionally, since $d(x,y) = \diam(T\square T') - 1$, it follows that 
    
    \begin{equation}\label{eq7} d_T(u_{x_1},u_{y_1}) = \diam(T)  \text{ and } d_{T'}(w_{x_2},w_{y_2}) = \diam(T')-1\end{equation} or that \begin{equation}\label{eq8}d_T(u_{x_1},u_{y_1}) = \diam(T) - 1  \text{ and } d_{T'}(w_{x_2},w_{y_2}) = \diam(T').\end{equation}

    First, suppose that Equations \ref{eq5} and \ref{eq7} hold.  Lemma \ref{lem:3.10} applied to $u_1$, $u_{y_1}$, $u_j$ and $u_{x_1}$ implies that $d_{T}(u_{x_1},u_j)=\diam(T)$.
    
    Assume $d(w_1,w_{x_2}) = \diam(T')$ and
    recall that $d_{T'}(w_1,w_{y_2}) = \diam(T')-1$ and $d_{T'}(w_{x_2},w_{y_2}) = \diam(T')-1$. 
    Notice that $w_{y_2} \notin \{w_1,w_{x_2}\}$ otherwise $\diam(T')$ is odd, a contradiction. 
    Thus, $T'_{w_{y_2}^+}$ is $3$-peripheral, contradicting that $T'$ is strongly non-$3$-peripheral.
    This means we must have $d(w_1,w_{x_2}) < \diam(T')$.
    Let $w_{k-1}$ the neighbor of $w_k$ in $T'$.  Applying Lemma \ref{lem:3.10} to $w_1,w_{y_2},w_{x_2},w_{k-1} \in V(T'_{w_1^-})$ gives $d_{T'}(w_{x_2},w_{k-1})=\diam(T')-1$. 
    If $w_{x_2}=w_k$, then this means $\diam(T')=2$. So, $T'$ is a star with at least two leaves. 
    However, no such star is strongly non-$3$-peripheral, a contradiction. 
    Thus, $w_{x_2} \neq w_k$ so the $w_{x_2}-w_k$ path contains $w_{k-1}$ and we can conclude $d_{T'}(w_{x_2},w_k)=\diam(T')$.
    Finally, $d(x,v_{j,k})=\diam(T\square T')$ so $c(x) = red$ which contradicts that $c(x) = green$.
    
    Second, assume that Equations \ref{eq5} and \ref{eq8} hold. 
    Recall that $d_{T'}(w_1,w_k) = \diam(T')$. 
    Since $d_{T'}(w_1,w_{y_2}) < \diam(T')$ and $w_{y_2}$ is a peripheral vertex, Lemma \ref{general tree peripheral lem} implies that $d_{T'}(w_{y_2},w_k) = \diam(T')$. 
    Applying Lemma \ref{lem:3.10} to $w_k$, $w_{y_2}$, $w_1$ and $w_{x_2}$ implies that $d(w_{x_2},w_1) = \diam(T')$. 
    Note that $w_1, w_{x_2}$ and $w_{y_2}$ are distinct since $T'$ is nontrivial with even diameter.
    Let $P$ be the $w_1-w_{x_2}$ path, $P'$ be the $w_{y_2}-w_{x_2}$ path and let $w_\ell\in V(P)\cap V(P')$ that is closest to $w_{y_2}$.  
    Then
    \[\begin{aligned}
        \diam(T') & = d_{T'}(w_1,w_{x_2}) = d_{T'}(w_1,w_\ell) + d_{T'}(w_\ell,w_{x_2}),  \\
        \diam(T') & = d_{T'}(w_{x_2},w_{y_2}) = d_{T'}(w_{x_2},w_\ell) + d_{T'}(w_\ell,w_{y_2}), \text{ and} \\
        \diam(T')-1 & = d_{T'}(w_1,w_{y_2}) = d_{T'}(w_1,w_\ell) + d_{T'}(w_\ell,w_{y_2}).
    \end{aligned}\]
    The first two equations imply that $d_{T'}(w_1,w_{\ell})=d_{T'}(w_{\ell},w_{y_2})$ and third imply that both equal $(\diam(T')-1)/2$, contradicting that $T'$ has even diameter.

    The case where Equations \ref{eq6} and \ref{eq7} hold is similar to when Equations \ref{eq5} and \ref{eq8} hold.  The case where Equations \ref{eq6} and \ref{eq8} hold is similar to when Equations \ref{eq5} and \ref{eq7} hold.  

    All cases end in a contradiction so $c$ is rainbow-free and $\aw(T\square T',3) = 4$. \end{proof}

The remaining results in this section consider conditions on tree products that give $\aw(T\square T',3) = 3$.  This is done by taking an arbitrary $3$-coloring and guaranteeing a rainbow $3$-AP. We begin with the smallest case when one of the trees is $P_2$, as it behaves quite differently from other trees.

    \begin{lem}\label{lem:annoyingP2}
        If $T$ is a tree with odd diameter, then $\aw(P_2\square T,3) = 3$.
        
    \end{lem}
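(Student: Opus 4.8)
The plan is to show that any exact $3$-coloring of $P_2 \square T$ produces a rainbow $3$-AP, which combined with Theorem \ref{theorem:rsw} gives $\aw(P_2 \square T, 3) = 3$. Write $V(P_2) = \{p_1, p_2\}$ so that $P_2 \square T$ consists of two copies $T_1, T_2$ of $T$ with corresponding vertices joined by an edge; the key distance fact from Proposition \ref{proposition:dist} is that $\dist(v_{a,1}, v_{b,2}) = \dist_T(u_a, u_b) + 1$, so the diameter of $P_2 \square T$ is $\diam(T) + 1$, which is even since $\diam(T)$ is odd. First I would invoke Proposition \ref{prop:everycopy} to obtain a color, say $green$, that appears in both $T_1$ and $T_2$. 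The remaining two colors $red$ and $blue$ must then appear somewhere, and the strategy is to locate them and build an AP through a $green$ vertex.

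The main structural step is to apply Lemma \ref{|c(Hi)|<3}: in any rainbow-free coloring each copy $T_i = G_i$ uses at most $2$ colors. Since there are three colors total and $green$ is shared, the natural dichotomy is whether $red$ and $blue$ appear in the same copy or in different copies. If both $red$ and $blue$ appear in a single copy $T_i$ (together with $green$, giving all three colors in one copy), then Lemma \ref{|c(Hi)|<3} is already violated, so this cannot happen in a rainbow-free coloring; hence $red$ and $blue$ must lie in different copies. After relabeling, suppose $red$ appears only in $T_1$ and $blue$ appears only in $T_2$. I would then pick a $red$ vertex $v_{a,1}$ and a $blue$ vertex $v_{b,2}$ and consider the distance $d = \dist_T(u_a, u_b)$. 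The crucial move is to exploit the odd diameter of $T$ to find a vertex $w$ realizing a geodesic structure that lets me place a $green$ midpoint or endpoint appropriately.

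Concretely, I expect the argument to split on the parity of $d = \dist_T(u_a, u_b)$. If $d$ is even, then the $T$-midpoint $u_m$ of the $u_a$–$u_b$ geodesic is equidistant from both, and the AP $(v_{a,1}, v_{m,?}, v_{b,2})$ (choosing the copy of $u_m$ so distances match, using the extra $+1$ when crossing copies) can be arranged to have a $green$ center, yielding a rainbow $red, green, blue$ progression — here one uses that $green$ appears in both copies together with the freedom to adjust which copy the center sits in. If $d$ is odd, then $v_{a,1}$ and $v_{b,2}$ already differ by the cross-copy edge, and $\dist(v_{a,1}, v_{b,2}) = d+1$ is even, so an isometric path between them has a well-defined midpoint; since no single copy contains all three colors, that midpoint vertex cannot be both $red$ and $blue$, forcing it to be $green$ (or to be recolorable into a $green$ witness via Corollary \ref{cor:isosubprod} restricting to an isometric $P_2 \square P_n$ and applying Theorem \ref{PmxPn}). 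The hard part will be handling the worst case cleanly: because $T$ is an arbitrary odd-diameter tree rather than a path, I cannot assume the geodesic midpoint is uniquely determined or that a single $green$ vertex sits exactly where I need it, so the main obstacle is guaranteeing that \emph{some} $green$ vertex lands at the correct distance to complete the AP. I anticipate resolving this by passing to an isometric subproduct $P_2 \square P_n$ via Corollary \ref{cor:isosubprod} (where $P_n$ is a geodesic of $T$ of even length, available since $\diam(T)$ is odd), and then applying the $m=2$, $n$ even case of Theorem \ref{PmxPn}, which gives $\aw(P_2 \square P_n, 3) = 3$ and thus forces a rainbow $3$-AP inside the subgraph — and hence in $P_2 \square T$ — whenever the three colors are present on that isometric grid.
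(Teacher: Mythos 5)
Your structural opening is sound, and in fact slightly cleaner than the paper's: using Proposition \ref{prop:everycopy} and Lemma \ref{|c(Hi)|<3} to conclude that $green$ lies in both copies of $T$ while $red$ and $blue$ lie in distinct copies is correct (the paper extracts the same facts from the minimality of a shortest tricolored isometric path, via Lemma \ref{isometricpathorC3}). The genuine gap is in the case you correctly identify as needing work, $d=\dist_T(u_a,u_b)$ even, and your proposed move there fails outright. If $d$ is even then $\dist(v_{a,1},v_{b,2})=d+1$ is odd, so the $red$ and $blue$ vertices lie in different partite sets of the bipartite graph $P_2\square T$, and \emph{no} vertex is equidistant from both: placing the geodesic midpoint $u_m$ in either copy, exactly one leg picks up the cross-copy $+1$, so the two legs always differ by exactly $1$. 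The $3$-AP $(v_{a,1},v_{m,?},v_{b,2})$ you want simply does not exist. Your fallback for this case also has a hole: to invoke the $m=2$, $n$ even case of Theorem \ref{PmxPn} you must exhibit a single isometric grid $P_2\square P_n$ with $n$ even that contains all three colors, but the natural grid over the $u_a$--$u_b$ geodesic has $d+1$ columns, which is odd --- that is the $\aw=4$ case of Theorem \ref{PmxPn} and forces nothing. (You also conflate ``geodesic of even length'' with ``$n$ even''; a geodesic with an even number of edges spans an odd number of columns.)

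The missing idea is the paper's extension step, and it is where the odd-diameter hypothesis actually enters: since $d$ is even and $\diam(T)$ is odd, $d<\diam(T)$, so Corollary \ref{d(u,v)+1} (or peripherality of the appropriate endpoint) produces $w_\ell$ with $\dist_T(w_b,w_\ell)=d+1$. The paper then considers the $3$-AP \emph{centered at the red vertex} reaching out to the vertex over $w_{\ell-1}$ (the neighbor of $w_\ell$ toward $w_b$), which is a valid $3$-AP precisely because both legs have length $d+1$; ruling out $green$ there (else rainbow) and $red$ there (Lemma \ref{|c(Hi)|<3}, since that copy of $T$ already holds $blue$ and $green$) forces a \emph{new} $blue$ vertex at distance $d$ in $T$ from $w_b$ but on a path of odd $T$-length $d+1$, so that $red$, $blue$, and $green$ all land inside the isometric grid $P_2\square P_{d+2}$ with an even number of columns --- only then does Theorem \ref{PmxPn} close the argument. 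Note also that the paper needs minimality of a shortest tricolored path to know interior vertices are $green$ and hence that $green$ appears inside this grid; your argument, working with an arbitrary $red$--$blue$ pair, has no such guarantee. Separately, your $d$ odd case contains a non sequitur: the geodesic midpoint ``cannot be both $red$ and $blue$, forcing it to be $green$'' is false as stated, since it can be just $red$ (in $T_1$) or just $blue$ (in $T_2$); this case is instead handled by minimality (a shortest tricolored path of even length has an all-$green$ interior, making its midpoint $green$ and the rainbow immediate). Finally, the appeal to Theorem \ref{theorem:rsw} is unnecessary: $\aw\geq 3$ is trivial, and the content of the lemma is entirely the upper bound.
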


    \begin{proof}
        Let $c$ be an exact, rainbow-free $3$-coloring of $P_2\square T$.
        By Lemma \ref{isometricpathorC3} we can find either an isometric path or a $C_3$ that contains all three colors.  
        No such $C_3$ exists since $P_2\square T'$ is bipartite, so let $P$ be a shortest isometric path that contains all three colors.
        Further, denote the first vertex of the path by $v_{a,b}$ and the last vertex of the path by $v_{c,d}$ with $a\le c$ and $b\le d$.  
        Without loss of generality, assume that $c(v_{a,b}) = red$, $c(v_{c,d}) = blue$ and note that every other vertex on the path is green.  
        Note that $a\neq c$ else we contradict Lemma \ref{|c(Hi)|<3} or the minimality of $P$. 
        Also, $P$ has odd length of at least three, else we can find a rainbow $3$-AP. 
        Since $P_2$ only has two vertices which are $u_a$ and $u_c$, for the remainder of the proof we will use $u_1=u_a$ and $u_2= u_c$.
        Note that the path $v_{1,b} - v_{2,b} - v_{2,d}$ and the path $v_{1,b} - v_{1,d}-v_{2,d}$ are both of shortest length and both have interior vertices that are all green, thus $green$ is in both $T_1$ and $T_2$.

        Since $d_T(u_1,u_2) = 1$, it follows that $d_{T'}(w_b,w_d) = d(v_{1,b},v_{2,d})-1$ which is even. 
        Additionally, since $\diam(T')$ is odd, we have $d_{T'}(w_b,w_d) < \diam(T')$.
        Without loss of generality, assume that $w_b$ is peripheral or $w_d$ is not peripheral.
        If $w_b$ is peripheral, then there exists some $w_{\ell} \in V(T)$ such that $d_{T}(w_b,w_{\ell}) = d_{T}(w_b,w_d)+1$. 
        If $w_d$ is not peripheral, then by Corollary \ref{d(u,v)+1}, there exists some $w_{\ell} \in V(T)$ such that $d_{T}(w_b,w_{\ell}) = d_{T}(w_b,w_d)+1$. 
        Consider $v_{2,\ell-1}$ where $w_{\ell-1}$ is the unique neighbor of $w_\ell$ on the $w_b-w_\ell$ path. 
        If $w_d = w_{\ell-1}$, then we have $c(v_{2,\ell-1}) = blue$. 
        If not, then the $3$-AP $(v_{2,d},v_{1,b},v_{2,\ell-1})$ implies $c(v_{c,\ell-1}) \neq green$ and Corollary \ref{|c(Hi)|<3} implies that $c(v_{2,\ell-1}) \neq red$ since $blue,green \in c(V(T_2))$. So, $c(v_{2,\ell-1}) = blue$.
        Now, the $v_{1,b}-v_{2,\ell}$ path contains all three colors and is a subgraph of some $P_2 \square P_n$ where $n$ is even because $d(w_b,w_\ell) = d(w_b,w_d)+1$ is odd. 
        This $P_2\square P_n$ subgraph is isometric, so Theorem \ref{PmxPn} implies $P_2 \square T$ has a rainbow $3$-AP, a contradiction.
    \end{proof}

    Recall that a tree $T$ with odd diameter is weakly non-$3$-peripheral if for every peripheral vertex $v$, $T_{v^-}$ is $3$-peripheral.

\begin{prop}\label{prop:weakly/odd}
    Suppose $T$ is a weakly non-$3$-peripheral tree with odd diameter. 
    Then $\aw(T\square T',3) = 3$ for any nontrivial tree $T'$ with odd diameter. 
\end{prop}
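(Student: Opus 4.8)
I want the bound $\aw(T\square T',3)\le 3$; the reverse inequality is automatic because a rainbow $3$-AP uses three colors. So let $c$ be an exact $3$-coloring of $T\square T'$ with color classes $red,blue,green$, and suppose for contradiction that $c$ admits no rainbow $3$-AP. As $T$ is weakly non-$3$-peripheral it is not $P_2$, so $\diam(T)\ge 3$. By Proposition \ref{prop:everycopy} we may take $green$ to be a color occurring in every copy $G_i$ of $T$, and by Lemma \ref{|c(Hi)|<3}, applied to each factor in turn, every copy of $T$ and every copy of $T'$ uses at most two colors. Call $w_i\in V(T')$ an $R$-, $B$-, or $G$-\emph{column} according as $c(V(G_i))$ is $\{green,red\}$, $\{green,blue\}$, or $\{green\}$. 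Corollary \ref{|c(V(Gi U Gj))|<3} shows $R$- and $B$-columns are never adjacent in $T'$, so on the $T'$-geodesic between any $R$-column and any $B$-column there is a $G$-column. This also forbids a single copy of $T'$ from carrying both a $red$ and a $blue$ vertex: such a copy would be $\{red,blue\}$-colored and hence meet a $G$-column in a $green$ vertex, a contradiction.

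The hypothesis on $T$ drives the argument. For any peripheral vertex $v$ of $T$, weak non-$3$-peripherality makes $T_{v^-}$ a $3$-peripheral tree, and Corollary \ref{cor:isosubprod} makes $T_{v^-}\square T'$ isometric in $T\square T'$; thus Theorem \ref{TxG 3-per} gives $\aw(T_{v^-}\square T',3)=3$. If the restriction of $c$ to $T_{v^-}\square T'$ used all three colors it would contain a rainbow $3$-AP that, by isometry, survives in $T\square T'$ — impossible. Writing $D(v)=\{u_j:\dist_T(u_j,v)=\diam(T)\}$ for the set of $T$-vertices deleted in forming $T_{v^-}$, we conclude that for every peripheral $v$ some color is confined to the deleted copies $\{H_j:u_j\in D(v)\}$. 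The confined color cannot be $green$: otherwise no column is a $G$-column, every column is an $R$- or $B$-column, and since these are non-adjacent in the connected tree $T'$ all columns share a type, leaving $red$ or $blue$ unused. Hence, letting $\mathcal R$ and $\mathcal B$ denote the sets of $T$-vertices meeting a $red$ and a $blue$ vertex respectively, we obtain the dichotomy that for every peripheral $v$, either $\mathcal R\subseteq D(v)$ or $\mathcal B\subseteq D(v)$.

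Because $D(v)$ consists only of peripheral vertices lying at distance $\diam(T)$ from $v$, the routine configurations collapse against the non-$3$-peripherality of $T$. If some $red$ (or $blue$) row is non-peripheral, then its containment fails for every $v$, forcing the other class into $D(v)$ for all peripheral $v$; evaluating this at a diametral pair of $T$ produces three pairwise $\diam(T)$-apart vertices, a contradiction. Otherwise all $red$ and all $blue$ rows are peripheral, and if some $a\in\mathcal R,\ b\in\mathcal B$ have $\dist_T(a,b)\ne\diam(T)$, then the peripheral choice $v=a$ satisfies $a\notin D(a)$ and $b\notin D(a)$, so both containments fail simultaneously. The only surviving case is the rigid one: every $red$ and $blue$ row is peripheral and every $red$–$blue$ pair is exactly $\diam(T)$ apart. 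Here deleting any $D(v)$ provably erases a color, so the restriction machinery alone cannot conclude.

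This rigid case is the main obstacle, and I would settle it by exhibiting a rainbow $3$-AP by hand, using that a $3$-AP only requires equal consecutive distances. Fix a $red$ vertex $p=v_{a,i}$ and a $blue$ vertex $q=v_{b,j}$; then $a\neq b$, $\dist_T(u_a,u_b)=\diam(T)$ is odd, and the $R$-column $w_i$ and $B$-column $w_j$ satisfy $\dist_{T'}(w_i,w_j)\ge 2$. I aim to produce a $green$ vertex $g$ with $\dist(p,g)=\dist(g,q)$, which makes $(p,g,q)$ a rainbow $3$-AP. Since $T\square T'$ is bipartite such a $g$ can exist only when $\dist(p,q)=\diam(T)+\dist_{T'}(w_i,w_j)$ is even, and I would use the freedom to vary $p,q$ and $g$ — together with the abundance of $green$ (one vertex in every copy of $T$), the presence of a $G$-column between $w_i$ and $w_j$, and the oddness of both $\diam(T)$ and $\diam(T')$ — to arrange the parity and locate an equidistant $green$ vertex; when no such configuration is directly available I expect to fall back on an isometric grid supplied by Corollary \ref{d(u,v)+1} and Theorem \ref{PmxPn}, as in Lemma \ref{lem:annoyingP2}. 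Verifying that a suitable equidistant $green$ vertex always exists is the delicate point; once it does, the resulting rainbow $3$-AP contradicts the choice of $c$ and yields $\aw(T\square T',3)=3$.
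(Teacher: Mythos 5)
Your structural reduction is sound as far as it goes, and it is a genuinely different opening than the paper's: you classify the copies of $T$ into $R$-, $B$-, and $G$-columns, use the isometry of $T_{v^-}\square T'$ together with Theorem \ref{TxG 3-per} to confine a color to the deleted rows $D(v)$ for every peripheral $v$, rule out $green$ as the confined color, and correctly eliminate everything except the rigid configuration in which every $red$ and $blue$ row is peripheral and every pair in $\mathcal{R}\times\mathcal{B}$ is exactly $\diam(T)$ apart. But the proof stops precisely where the real work begins. Your last paragraph is an admitted sketch (``I would use the freedom\ldots'', ``I expect to fall back\ldots'', ``the delicate point''), and the midpoint plan cannot be completed as stated: since $\dist(p,q)=\diam(T)+\dist_{T'}(w_i,w_j)$ with $\diam(T)$ odd, the parity of $\dist_{T'}(w_i,w_j)$ is not under your control, and when the total is odd the bipartite product has no equidistant vertex at all, so a rainbow $3$-AP must be built with a $red$ or $blue$ vertex in the middle or with a different common difference --- none of which you construct. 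Knowing that $green$ meets every copy of $T$ gives no control over \emph{which} green vertices sit at the required distances, and your rigid case is exactly the case $d_T(u_a,u_c)=\diam(T)$ where the paper's proof spends most of its effort. There is also a smaller unlicensed step: your applications of Corollary \ref{|c(V(Gi U Gj))|<3} to adjacent columns take $G=T$, $H=T'$ and therefore require $|T'|\geq 3$; when $T'\cong P_2$ (which the hypotheses allow) two adjacent columns span the whole product and the corollary is false, so that case must be dispatched separately, e.g.\ by Lemma \ref{lem:annoyingP2}.

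For comparison, the paper avoids your global column analysis entirely: by Lemma \ref{isometricpathorC3} it takes a \emph{shortest} isometric path $P$ from a $red$ vertex $v_{a,b}$ to a $blue$ vertex $v_{c,d}$ with all interior vertices $green$, of odd length. If $d_T(u_a,u_c)<\diam(T)$, then $P$ lies in $T_{u_i^-}\square T'$ for a suitably chosen peripheral $u_i$; weak non-$3$-peripherality makes $T_{u_i^-}$ $3$-peripheral, and Theorem \ref{TxG 3-per} with Corollary \ref{cor:isosubprod} yields a rainbow $3$-AP --- this matches the easy half of your argument. The hard case $d_T(u_a,u_c)=\diam(T)$ is resolved by a shift you are missing: using peripherality of $w_b$ or Corollary \ref{d(u,v)+1}, pick $w_\ell$ with $d_{T'}(w_b,w_\ell)=d_{T'}(w_b,w_d)+1$; the $3$-AP $(v_{c,d},v_{a,b},v_{c-1,\ell})$ forbids $green$ at $v_{c-1,\ell}$ and Corollary \ref{|c(V(Gi U Gj))|<3} forbids $red$, so $c(v_{c-1,\ell})=blue$; minimality of $P$ (plus locating an all-$green$ copy of $T'$ via repeated use of Corollary \ref{|c(V(Gi U Gj))|<3}) forces a same-length geodesic $P'$ from $v_{a,b}$ to $v_{c-1,\ell}$ to have $green$ interior, and since $d_T(u_a,u_{c-1})<\diam(T)$ the first case applies to $P'$. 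Some argument of this kind --- trading one step of $T$-distance for one step of $T'$-distance while keeping a tri-chromatic geodesic --- is what your rigid case needs; until it is supplied, the proposal establishes useful structure but does not prove the proposition.
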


\begin{proof}
    Assume $c$ is a rainbow-free, exact $3$-coloring of $T \square T'$ using the colors $red$, $blue$, and $green$.
    By Lemma \ref{isometricpathorC3}, there is either an isometric $C_3$ or isometric path containing all three colors.
    Since $T\square T'$ is bipartite, it has no $C_3$ subgraph so there must be an isometric path containing all three colors. Let $P$ be a shortest such path. 
    Specifically, say $P$ is a $v_{a,b}-v_{c,d}$ path where $v_{a,b}$ is $red$, $v_{c,d}$ is $blue$ and all others are $green$ and, without loss of generality, $a\le c$ and $b \le d$. 
    Note that $P$ must have odd length, otherwise there is a rainbow $3$-AP.
    Note that $b \neq d$ and $a\neq c$ otherwise $T_b=T_d$ and $T'_a = T'_c$, respectively, contain all three colors contradicting Lemma \ref{|c(Hi)|<3}. 
    Define $u_{c-1} \in V(T)$ as the unique neighbor of $u_c$ on the $u_a-u_c$ path.
    Note that the path following $v_{a,b}-v_{a,d}-v_{c,d}$ is a $v_{a,b}-v_{c,d}$ geodesic. 
    So, $c(v_{i,d})=green$ for $a\le i \le c-1$ otherwise we can construct a path containing all three colors which is shorter than $P$.
    In particular, this shows $green$ is in every copy of $T'$ which intersects $P$ except for possibly $T'_c$.
    
    First consider the case where $d_T(u_a,u_c) < \diam(T)$.  
    Then there exists some $u_i \in V(T)$ such that $P$ is contained in  $T_{u_i^-} \square {T'}$. 
    In particular, if either $u_a$ or $u_c$ is peripheral, then choose that to be $u_i$, otherwise, if neither are peripheral, any peripheral $u_i$ suffices.
    Notice that  $T_{u_i^-} \square T'$ is an isometric subgraph of $T\square T'$ and contains three colors. 
    Since $T$ is weakly non-$3$-peripheral, $T_{u_i^-}$ is $3$-peripheral.  By Theorem \ref{TxG 3-per}, there is a rainbow $3$-AP in $T_{u_i^-} \square T'$.
    Since $T_{u_i^-} \square T'$ is an isometric subgraph, this contradicts that $c$ is a rainbow-free coloring. 
    
    Now consider when $d_T(u_a,u_c) = \diam(T)$.
    Since the length of $P$ is odd, it must be less than $\diam(T\square T')$. So, $d_{T'}(w_b,w_d) < \diam(T')$.
    Without loss of generality, assume that $w_b$ is peripheral or $w_d$ is not peripheral.
    If $w_b$ is peripheral, then there exists some $w_{\ell} \in V(T')$ such that $d_{T'}(w_b,w_{\ell}) = d_{T'}(w_b,w_d)+1$. 
    If $w_d$ is not peripheral, then by Corollary \ref{d(u,v)+1}, there exists some $w_{\ell} \in V(T')$ such that $d_{T'}(w_b,w_{\ell}) = d_{T'}(w_b,w_d)+1$. 
    Define $P'$ to be a $v_{a,b}-v_{c-1,\ell}$ geodesic in $T\square T'$ and note that $P$ and $P'$ have the same length. 
    We will show $c(v_{c-1,\ell})=blue$ and all interior vertices of $P'$ are colored $green$ by $c$.
    If $c(v_{c-1,\ell})=green$, then $(v_{c,d},v_{a,b},v_{c-1,\ell})$ is a rainbow $3$-AP, a contradiction. 
    If $c(v_{c-1,\ell})=red$, then since $green \in c(V(T'_{c-1}))$, we have $green,red,blue \in c(V(T'_{c-1}) \cup V(T'_c))$, contradicting Corollary \ref{|c(V(Gi U Gj))|<3}.
    Thus, $c(v_{c-1,\ell})=blue$. 
    Recall that $green$ appears in every copy of $T'_i$ for each $u_i$ on the $u_a-u_{c-1}$ path in $T$.
    Since we also have $red \in c(V(T'_a))$, $blue \in c(V(T'_{c-1}))$, repeated applications of Corollary \ref{|c(V(Gi U Gj))|<3} show that there is some $u_i \in V(T)$ on the $u_a-u_{c-1}$ path such that $c(V(T_i))=\{green\}$. 
    In particular, there is some interior vertex of $P'$ colored $green$. 
    So, if any other interior vertex of $P'$ were not colored $green$, then there exists a path containing all three colors which is shorter than $P$, contradicting the minimality of $P$. 
    Thus, all interior vertices of $P'$ are $green$. 
    Since $d_T(u_a,u_{c-1})<\diam(T)$ applying the argument in the previous paragraph to $P'$ yields a rainbow $3$-AP.
\end{proof}

Recall that a tree $T$ with even diameter is weakly non-$3$-peripheral if there exists some vertex $v\in V(T)$ such that $T_{v^+}$ is $3$-peripheral.

\begin{prop}\label{prop:weakly/even}
    Suppose $T$ is a weakly non-$3$-peripheral tree with even diameter. Then $\aw(T\square T',3) = 3$ for any nontrivial even diameter tree $T'$. 
\end{prop}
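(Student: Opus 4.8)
The plan is to show that every exact $3$-coloring $c$ of $T\square T'$ admits a rainbow $3$-AP; combined with Theorem~\ref{theorem:rsw} and the lower bound, this gives $\aw(T\square T',3)=3$. Since $T$ is weakly non-$3$-peripheral with even diameter, Definition~\ref{defn:weak/strong non-3-per}(ii) supplies a vertex $u^*\in V(T)$ such that $T_{{u^*}^+}$ is $3$-peripheral. The central idea, as the authors flag in the paragraph preceding Proposition~\ref{prop:strongly/odd}, is to pass to a graph where we can invoke Theorem~\ref{TxG 3-per} (which forces a rainbow $3$-AP whenever one factor is a $3$-peripheral tree), while ensuring the resulting $3$-AP lives inside the original $T\square T'$ rather than only in the enlarged $T_{{u^*}^+}\square T'$.

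First I would recall from Lemma~\ref{isometricpathorC3} that, since $T\square T'$ is bipartite and hence $C_3$-free, a rainbow-free $c$ yields a shortest isometric path $P$ carrying all three colors, say from $v_{a,b}$ (red) to $v_{c,d}$ (blue) with all interior vertices green; the path necessarily has odd length, else the midpoint gives a rainbow $3$-AP directly. As in Proposition~\ref{prop:weakly/odd}, Lemma~\ref{|c(Hi)|<3} forces $a\neq c$ and $b\neq d$. The natural case split mirrors that proof: if $d_T(u_a,u_c)<\diam(T)$, then $P$ already sits inside $T_{{u_i}^-}\square T'$ for a suitable peripheral $u_i$, but for the \emph{even}-diameter case the cleaner move is to work with $T_{{u^*}^+}$. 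Concretely, I would extend $c$ to $T_{{u^*}^+}\square T'$ by coloring each vertex $v_{u',w}$ (where $u'$ is the new leaf at $u^*$) with the color of $v_{u^*,w}$, i.e. copying the $T'$-fiber over $u^*$. Then Theorem~\ref{TxG 3-per} guarantees a rainbow $3$-AP in the extended product; the burden is to argue that such a progression can be taken to avoid the added leaf-fiber (or to be translated off it), so that it already appears in $T\square T'$.

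The authors hint at a sharper route: rather than coloring $T_{{u^*}^+}\square T'$ directly, they suggest deleting all leaves of the $3$-peripheral tree, which by Lemma~\ref{lem:weakly/per} (and the observation that pruning leaves from a $3$-peripheral tree leaves it $3$-peripheral or a single vertex) yields a $3$-peripheral subgraph \emph{inside} $T$, so any $3$-AP found there automatically lives in the parent graph. I would therefore use Lemma~\ref{lem:weakly/per} to locate $u^*$ with $d(u^*,v)=\diam(T)-1$ for every peripheral $v$, identify the $3$-peripheral substructure it certifies, and verify that the three pairwise-diameter vertices together with their common equidistant point (guaranteed by Lemma~\ref{lem:3-peripheral even diam}) all persist in $T$. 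Here is where the even-diameter hypothesis on $T'$ enters: it keeps $\diam(T\square T')=\diam(T)+\diam(T')$ even, so the distance bookkeeping matches Lemma~\ref{lem:even-diam-rb-free-coloring}'s parity analysis and prevents the degenerate midpoint obstruction.

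The main obstacle I anticipate is precisely the descent step: ensuring the rainbow $3$-AP produced by Theorem~\ref{TxG 3-per} in the $3$-peripheral enlargement does not rely on a vertex outside $T\square T'$. One must show that the common difference and the three vertices of the progression can be realized using only original vertices, which amounts to checking that the equidistant center from Lemma~\ref{lem:3-peripheral even diam} and the diametral triple are not supported solely on added leaves. The remaining case $d_T(u_a,u_c)=\diam(T)$ should be handled by the same translation-and-reflection trick used at the end of Proposition~\ref{prop:weakly/odd}: use Corollary~\ref{d(u,v)+1} to push an endpoint one step to an $\ell$ with $d_{T'}(w_b,w_\ell)=d_{T'}(w_b,w_d)+1$, build a parallel geodesic $P'$, and apply Corollaries~\ref{|c(Hi)|<3} and \ref{|c(V(Gi U Gj))|<3} to force its endpoint blue and its interior green, reducing to the first case. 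Verifying that all interior vertices stay green after these translations, via repeated use of Corollary~\ref{|c(V(Gi U Gj))|<3} to locate an entirely green copy of $T$, will be the most delicate bookkeeping.
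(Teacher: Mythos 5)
There is a genuine gap: your central mechanism --- extend $c$ to $T_{{u^*}^+}\square T'$ by copying the fiber over $u^*$ onto the new leaf fiber, apply Theorem~\ref{TxG 3-per}, then ``translate'' the resulting rainbow $3$-AP back into $T\square T'$ --- is exactly the step you leave unproven, and it does not go through by the naive substitution. Every geodesic leaving a leaf-fiber vertex $v_{u',w}$ passes through $v_{u^*,w}$, so replacing an \emph{endpoint} $v_{u',w}$ of a rainbow $3$-AP by $v_{u^*,w}$ decreases its distance to each of the other two vertices by exactly one, turning the equal differences $(d,d)$ into $(d-1,d)$: the result is no longer a $3$-AP. (Only the case where the \emph{middle} vertex lies in the leaf fiber survives this substitution, since then both differences drop by one simultaneously.) Since the rainbow $3$-AP handed to you by Theorem~\ref{TxG 3-per} may well have an endpoint in the leaf fiber, the descent step is the entire difficulty, and your proposal acknowledges it without resolving it. Your fallback for the subcase $d_T(u_a,u_c)<\diam(T)$ --- that $P$ sits inside $T_{u_i^-}\square T'$ ``as in Proposition~\ref{prop:weakly/odd}'' --- also buys nothing here: since $\diam(T)$ is even, Observation~\ref{obs:diamTtilde} gives $T_{u_i^-}$ odd diameter, so by Lemma~\ref{lem:3-peripheral even diam} it cannot be $3$-peripheral and Theorem~\ref{TxG 3-per} does not apply to it.

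The paper's proof avoids extending the coloring altogether and runs the two cases in essentially the opposite direction from yours. When $d_T(u_a,u_c)=\diam(T)$, it uses Lemma~\ref{lem:weakly/per} to place the certified vertex $u_\ell$ with $d_T(u_a,u_\ell)=d_T(u_c,u_\ell)=\diam(T)-1$ and then directly manufactures a rainbow $3$-AP: $(v_{c,d},v_{a,b},v_{\ell,d'})$ forces $v_{\ell,d'}$ to be non-$green$, and then $(v_{a,d},v_{\ell,d'},v_{c,d})$ or $(v_{a,b},v_{\ell,d'},v_{c,d})$ is rainbow according as $v_{\ell,d'}$ is $red$ or $blue$ --- no appeal to Theorem~\ref{TxG 3-per} at all. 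When $d_T(u_a,u_c)<\diam(T)$, it shows $c(v_{c',d-1})=blue$ via Corollary~\ref{|c(V(Gi U Gj))|<3}, builds the parallel geodesic $P'$, and then descends to the \emph{odd}-diameter results already proved: it shows $\bigl(T_{u_r^-}\bigr)_{u_\ell^-}$ is a single vertex or $3$-peripheral, hence $T_{u_r^-}$ is $P_2$ or weakly non-$3$-peripheral with odd diameter, so that Lemma~\ref{lem:annoyingP2} and Proposition~\ref{prop:weakly/odd} give $\aw\bigl(T_{u_r^-}\square T'_{w_i^-},3\bigr)=3$ on an isometric subgraph containing all three colors. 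This reduction --- shrinking \emph{both} factors with the $T_{v^-}$ operation so that the previously established odd-diameter propositions apply --- is the engine your proposal is missing; the leaf-pruning remark you quote from the paper's motivation is only a heuristic for it, not a substitute for the case analysis and the $blue$-endpoint bookkeeping that make the reduction legitimate.
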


\begin{proof}
    Assume $c$ is a rainbow-free, exact $3$-coloring of $T \square T'$ using the colors $red$, $blue$, and $green$.
    By Lemma \ref{isometricpathorC3}, there is either a isometric $C_3$ or isometric path containing all three colors.
    Since $T\square T'$ is bipartite no such $C_3$ exists so let $P$ be a shortest such path. 
    Specifically, say $P$ is a $v_{a,b}-v_{c,d}$ path where $v_{a,b}$ is $red$, $v_{c,d}$ is $blue$, and all others are $green$.
    We remark that all $v_{a,b}-v_{c,d}$ paths must have the interior vertices be $green$, else we can construct a shorter path containing all three colors. 
    Furthermore, $P$ must have odd length, otherwise there is a rainbow $3$-AP.

    First consider the case when $d(u_a,u_c)=\diam(T)$.
    Then $d_T(u_a,u_c)$ is even implying that $d_{T'}(w_b,w_d)$ is odd, and in particular, $d_{T'}(w_b,w_d) < \diam(T')$.
    Without loss of generality, suppose $w_b$ is peripheral or $w_d$ is not peripheral.
        If $w_b$ is  peripheral, there exists a vertex $w_{d'}$ in $T'$ for which $d_{T'}(w_b,w_{d'}) = d_{T'}(w_b,w_d)+1$.
        If $w_d$ is not peripheral, Corollary \ref{d(u,v)+1} yields a vertex $w_{d'}$ in $T'$ for which $d_{T'}(w_b,w_{d'}) = d_{T'}(w_b,w_d)+1$.
        Since $T$ is weakly non-$3$-peripheral, there exists some vertex $u_{\ell}$ in $T$ such that $T_{u_\ell^+}$ is $3$-peripheral.   
        Lemma \ref{lem:weakly/per} implies $d_T(u_a,u_\ell)=d_T(u_c,u_\ell)=\diam(T)-1$. 
        This implies $(v_{c,d},v_{a,b},v_{\ell,d'})$ is a $3$-AP. 
        To avoid a rainbow $3$-AP, $v_{\ell,d'}$ is not $green$. Since $v_{a,d}$ and $v_{c,b}$ lie on a $v_{a,b}-v_{c,d}$ geodesic, an earlier remark implies they must be $green$.
        Now, the $3$-APs $(v_{a,d},v_{\ell,d'},v_{c,d})$ or $(v_{a,b},v_{\ell,d'},v_{c,d})$ are rainbow depending on whether $v_{\ell,d'}$ is $red$ or $blue$, respectively, contradicting that $c$ is a rainbow free coloring.
        
        Now consider the case when $d(u_a,u_c)<\diam(T)$.
        Applying the same argument used in the previous case, there exists some $u_{c'}$ in $T$ such that, without loss of generality, $d_T(u_a,u_{c'}) = d(u_a,u_c)+1$.
         Let $w_{d-1} \in V(T')$ be the unique neighbor of $w_d$ on the $w_b-w_d$ path.
         Consider the vertex $v_{c',d-1}$ and the $3$-AP $(v_{c,d},v_{a,b},v_{c',d-1})$. 
         Notice $v_{c',d-1}$ cannot be $green$, otherwise this $3$-AP is rainbow.
         Since $T'$ is nontrivial and has even diameter, $|V(T')| \geq 3$. 
         So, Corollary \ref{|c(V(Gi U Gj))|<3} implies that $v_{c',d-1}$ cannot be $red$, otherwise $|c(V(T_{d-1}) \cup V(T_d))| = 3$ because $green$ must appear in $T_{d-1}$ or $T_{d}$.
         Thus, $c(v_{c',d-1}) = blue$.
        
        Let $P'$ be a $v_{a,b}-v_{c',d-1}$ geodesic.
        Since the length of $P'$ is the same as the length of $P$, all internal vertices of $P$ must be colored $green$, otherwise there is a shorter path than $P$ containing all three colors.
        If $d_T(u_a,u_{c'}) = \diam(T)$, then apply the argument from the previous case.
        So, suppose $d_T(u_a,u_{c'}) < \diam(T)$.
        Also, $d(w_b,w_{d-1}) < d(w_b,w_d) \leq \diam(T')$.
        So, there are some peripheral vertices $u_r \in V(T), w_i \in V(T')$ such that $P'$ is contained in $T_{u_r^-} \square T'_{w_i^-}$.
        We choose $u_r$ to be equal $u_a$ or $u_{c'}$ if either are peripheral, otherwise we may choose $u_r$ to be any peripheral vertex. 
        Likewise for choosing $w_i$ depending on whether $w_b$ and $w_{d-1}$ are peripheral or not.
        Recall $T_{u_{\ell}^+}$ is $3$-peripheral. 
        Since $u_r$ is a peripheral vertex of $T$, there exists some $u_s \in V(T)$ such that $\dist_T(u_r,u_s)=\diam(T)$.
        Lemma \ref{lem:weakly/per} implies that $d(u_\ell,u_r) = d(u_\ell,u_s) = \diam(T)-1$.
        Let $u_{r-1}$ and $u_{s-1}$ be the unique neighbors of $u_r$ and $u_s$, respectively, so that \[d(u_{r-1},u_{s-1}) = d(u_{\ell},u_{r-1}) = d(u_\ell,u_{s-1}) = \diam(T)-2 = \diam\left(\left(T_{u_r^-}\right)_{u_{\ell}^-}\right),\]
        where the last equality holds by Observation \ref{obs:diamTtilde}.
        If $\diam(T)=2$, then $u_{r-1}=u_{s-1}=u_{\ell}$, and if $\diam(T)>2$, then these three vertices are pairwise distinct.
        So, $\left(T_{u_r^-}\right)_{u_{\ell}^-}$ is a single vertex or $3$-peripheral.
        This implies $T_{u_r^-} \cong P_2$ or $T_{u_r^-}$ is weakly non-$3$-peripheral.
        By Observation \ref{obs:diamTtilde}, Lemma \ref{lem:annoyingP2} and Proposition \ref{prop:weakly/odd}, we have $\aw\left(T_{u_r^-} \square T'_{w_i^-},3\right)=3$.
        Since $P'$ contains all three colors, it follows that $T_{u_r^-} \square T'_{w_i^-}$ contains all three colors and thus contains a rainbow $3$-AP.
        Since this is an isometric subgraph of $T\square T'$, this rainbow $3$-AP also exists in $T\square T'$, a contradiction.
\end{proof}

Now, using Theorems \ref{thm:treeprodeven}, \ref{TxG 3-per} and \ref{theorem:treeprododd}, we have a full classification of the anti-van der Waerden number of the product of two trees.  This quickly leads to a full classification of products of two forests.

\begin{customcor}{1.4}

Let $T$ and $T'$ be trees.  Then,
\[\aw(T\square T',3) = 
\begin{cases}
     & \text{if $T$ or $T'$ is $3$-peripheral, or}\\

    3& \text{$\diam(T\square T')$ is even and $T$ or $T'$ is $P_2$, or}\\

    & \text{$\diam(T\square T')$ is even and $T$ or $T'$ is weakly non-$3$-peripheral,}\\
    4 & \text{otherwise}.
\end{cases}\]

\end{customcor}

The following observation relies on the pigeonhole principle and helps to provide a full classification for products of forests.
\begin{obs}[\cite{SWY}]\label{obs:disconnected}
    If $G$ is disconnected with connected components $\{G_i\}_{i=1}^\ell$, then
    \[\aw(G,k) = 1+\displaystyle\sum_{i=1}^\ell (\aw(G_i,k) - 1).\]
\end{obs}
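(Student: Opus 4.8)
The plan is to exploit the fact that, in a disconnected graph, every $k$-AP is confined to a single connected component, which reduces the statement to a pigeonhole count. The crucial first step I would record is: if $(v_1,\dots,v_k)$ is a $k$-AP with common difference $d$, then $\dist(v_i,v_{i+1}) = d$ is a finite number for each $i$, so consecutive vertices lie in the same component; hence all of $v_1,\dots,v_k$ lie in a single $G_i$. Consequently, an exact coloring $c$ of $G$ admits a rainbow $k$-AP if and only if its restriction to $V(G_i)$ admits one for some $i$. Throughout, write $r_i = \aw(G_i,k)$, so that $r_i-1$ is exactly the largest number of colors usable on $G_i$ without forcing a rainbow.

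For the lower bound $\aw(G,k) \ge 1 + \sum_{i=1}^\ell (r_i-1)$, I would exhibit an exact coloring of $G$ with $\sum_{i=1}^\ell (r_i-1)$ colors and no rainbow $k$-AP. By definition of $r_i$, each component $G_i$ admits an exact $(r_i-1)$-coloring avoiding rainbow $k$-APs; I would assign each component a palette disjoint from all the others and color it by such a coloring. The resulting global coloring is surjective onto $\sum_{i=1}^\ell (r_i-1)$ colors, and since no $k$-AP crosses components, it is rainbow-free. Thus this many colors cannot force a rainbow, which yields the lower bound.

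For the upper bound, I would take any exact coloring $c$ of $G$ using $1 + \sum_{i=1}^\ell (r_i-1)$ colors and set $s_i = |c(V(G_i))|$. Every color appears on at least one component, so the total number of colors is at most $\sum_{i=1}^\ell s_i$, giving $\sum_{i=1}^\ell s_i \ge 1 + \sum_{i=1}^\ell (r_i-1) > \sum_{i=1}^\ell (r_i-1)$. By pigeonhole there is an index $i$ with $s_i \ge r_i$, so the restriction $c|_{V(G_i)}$ uses at least $\aw(G_i,k)$ colors and therefore contains a rainbow $k$-AP. By the first paragraph, this is a rainbow $k$-AP of $G$, completing the argument.

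I do not expect a serious obstacle here; the two points I would treat with care are the finiteness convention on the common difference $d$ (so that an AP genuinely cannot span two components) and the monotonicity fact used in the upper bound, namely that any coloring of $G_i$ with at least $\aw(G_i,k)$ colors contains a rainbow $k$-AP. The latter follows because merging two color classes never creates a new rainbow, so any rainbow-free coloring with more than $\aw(G_i,k)$ colors could be collapsed down to a rainbow-free exact $\aw(G_i,k)$-coloring, contradicting the definition of $\aw(G_i,k)$; I would state this explicitly rather than leave it implicit.
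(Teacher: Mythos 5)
Your proof is correct and takes the intended route: the paper states this observation without proof, citing \cite{SWY} and remarking only that it ``relies on the pigeonhole principle,'' which is precisely your upper-bound step. Your lower-bound construction via disjoint palettes on the components, together with the two details you flag explicitly (that a $k$-AP cannot cross components because its common difference is finite, and that any exact coloring with at least $\aw(G_i,k)$ colors yields a rainbow via merging color classes), fills in the standard argument exactly as in the cited source.
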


Corollary \ref{cor:full} and Observation \ref{obs:disconnected} gives the following Corollary. Note that Corollary \ref{cor:forest} was previously known, but it was not known how to compute $|P|$ and $|S|$ explicitly.

\begin{cor}\label{cor:forest}
    Let $F_1$ and $F_2$ be forests and let $P$ be the set of connected components of $F_1\square F_2$ whose anti-van der Waerden number is $3$ and $S$ be the set of connected components of $F_1\square F_2$ whose anti-van der Waerden number is $4$.  Then,

    \[\aw(F_1\square F_2,3) = 2|P| + 3|S| + 1.\]
\end{cor}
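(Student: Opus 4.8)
The plan is to obtain the formula by combining the component-wise classification in Corollary \ref{cor:full} with the disjoint-union reduction in Observation \ref{obs:disconnected}; the only genuinely structural ingredient is understanding how connected components behave under the Cartesian product.

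First I would establish that the connected components of $F_1 \square F_2$ are exactly the products of the component trees of the two forests. Writing $F_1 = \bigsqcup_a T_a$ and $F_2 = \bigsqcup_b T_b'$ as disjoint unions of nontrivial trees, I would argue directly from the adjacency rule for $\square$: if $(x,y)$ and $(x',y')$ are adjacent, then they agree in one coordinate and are adjacent (within a single component tree) in the other, so any path keeps the first coordinate inside a fixed $T_a$ and the second inside a fixed $T_b'$. Conversely, the Cartesian product of two connected graphs is connected, so each block $T_a \square T_b'$ is connected. Hence the components of $F_1 \square F_2$ are precisely the graphs $T_a \square T_b'$, indexed by pairs $(a,b)$.

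Next I would verify that every component has anti-van der Waerden number $3$ or $4$. Since a rainbow $3$-AP needs three distinct colors, no exact $2$-coloring can contain one, giving the universal bound $\aw(G,3)\ge 3$ whenever $G$ admits a $3$-AP; and since each component $T_a\square T_b'$ is a product of two connected graphs on at least two vertices, Theorem \ref{theorem:rsw} gives $\aw(T_a\square T_b',3)\le 4$. Thus $\aw(T_a\square T_b',3)\in\{3,4\}$ for every component, and the precise value is supplied by Corollary \ref{cor:full}. In particular, the sets $P$ and $S$ partition the components of $F_1\square F_2$.

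Finally, applying Observation \ref{obs:disconnected} and splitting the sum over the components into those in $P$ and those in $S$ yields
\[ \aw(F_1\square F_2,3) = 1 + \sum_{C\in P}\bigl(\aw(C,3)-1\bigr) + \sum_{C\in S}\bigl(\aw(C,3)-1\bigr) = 1 + 2|P| + 3|S|, \]
which is the desired formula. The step requiring the most care is the component-decomposition claim, together with confirming that no component falls outside $\{3,4\}$; this is exactly why one works with forests whose components are nontrivial trees, so that every component is a genuine product of two nontrivial trees and Theorem \ref{theorem:rsw} applies. Isolated vertices in $F_1$ or $F_2$ would produce components that are bare trees (or single vertices) rather than tree products, which is the one degenerate situation to rule out or handle separately.
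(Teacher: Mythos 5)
Your proposal is correct and takes essentially the same route as the paper, which obtains the formula in exactly this way—by combining the classification in Corollary \ref{cor:full} with Observation \ref{obs:disconnected}—so the component decomposition of $F_1\square F_2$ and the verification that every component satisfies $\aw \in \{3,4\}$ via Theorem \ref{theorem:rsw} that you spell out are precisely the (unstated) details behind the paper's one-line derivation. Your closing caveat about isolated vertices is a sensible observation the paper leaves implicit: a trivial component of $F_1$ would yield bare-tree components of the product whose anti-van der Waerden number need not lie in $\{3,4\}$, so the statement tacitly assumes $P$ and $S$ exhaust the components.
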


\subsection*{Acknowledgements} The second and third authors were supported by the Dean's Distinguished Fellowship research grant from the University of Wisconsin-La Crosse.  The first author was supported by NSF grant DMS-2418903 and summer research stipend from Fairfield University.

\end{document}